\journal{Physica D}
    \newcommand{\commentOUT}[1]{}
\newcommand{\ex}{{\bf\sf E}}               
\newcommand{\bz}{{\bf z}}               
\newcommand{\ra}{\rightarrow}           
\newcommand{\diag}{{\rm diag}\,}
\newcommand{\vect}{{\rm vect}\,}
\def\ex{\mathbb{E}}
\newcommand{\Real}{\mathbb R}
\newcommand{\al}{\alpha}                
\newcommand{\g}{\lambda}                
\newcommand{\lam}{\lambda}               
\newcommand{\Lam}{\Lambda}               
\newtheorem{pro}{Proposition}
\newtheorem{thm}{Theorem}[section]
\newtheorem{lem}{Lemma}[section]
\newtheorem{rem}{Remark}[section]
\newtheorem{cor}{Corollary}
\newcommand{\tr}{{\rm Tr}}
\newcommand{\Id}{{\rm I}}
\begin{document}

\begin{frontmatter}



\title{Dynamical Behaviors of the Gradient Flows for In-Context Learning}


\author[IBM]{Songtao Lu}
\author[IBM]{Yingdong Lu}
\author[IBM]{Tomasz Nowicki} 

\affiliation[IBM]{organization={IBM T.J. Watson Research Center},
            addressline={1101 Kitchawan Rd}, 
            city={Yorktown Heights},
            postcode={10598}, 
            state={New York},
            country={U.S.A}}


\begin{abstract}
We derive the system of differential equations for the gradient flow characterizing the training process of linear in-context learning in full generality. Next, we explore the geometric structure of the gradient flows in two instances, including identifying its invariants, optimum, and saddle points. This understanding allows us to quantify the behavior of the two gradient flows under the full generality of parameters and data.
\end{abstract}

\begin{keyword}
In-context learning, gradient flows, ordinary differential equations; stability of dynamical systems.
\end{keyword}
\end{frontmatter}

\epigraph{6accdae13eﬀ7i3l9n4o4qrr4s8t12ux}{\textit{Isaac Newton \\ \emph{Letter to Gottfried W. Leibniz}, 1677}}

\section{Introduction}
\label{sec:intro}

In-context learning (ICL) is a versatile capability to predict input-output dependence on test data without making any updates to the parameters which are obtained from training data. It is one of the key features that establishes transformer-based neural networks as the default machine learning model. For further discussions on the basic mechanism and main benefit of ICL in terms of avoiding expensive fine-tuning and enhancing learning capabilities, see, e.g.,~\cite{li2024fine},~\cite{lampinen2022can},~\cite{wang2024theoretical}, and~\cite{wei2022chain}.

There have been extensive studies on ICL from various theoretical aspects; see, e.g.~\cite{garg2022can}. We focus on an influential linear attention model on the gradient flow of its training dynamics that has been analyzed in~\cite{zhang2023trained}, and adapted in many follow-up studies, see, e.g.\cite{huang2023context},~\cite{kim2024transformers}, and~~\cite{yang2024context}. So far, these studies have focused on model cases and have demonstrated convergence of gradient flow, thus the training dynamics, for the range of initial conditions and parameters. 

\subsection{Our Goals}

Our first goal is to provide \emph{a complete characterization} of a system of differential equations of the gradient flow, using a rather elementary approach. This will serve as the cornerstone for a more comprehensive understanding of the gradient flow dynamics and, eventually, of the learning dynamics for~ICL. 

Due to the complicated nature of the system under consideration, the system of resulting differential equations is highly involved. To focus on revealing its structural properties, we conduct a detailed analysis of two instances that we believe to be equipped with all the necessary features of the system. 

First, we consider a system that generalizes the one in~\cite{zhang2023trained}, which ignores the contributions from some of the weight matrices. For this system of differential equations, we perform detailed calculations to characterize its invariant sets, saddle points, and equilibrium points explicitly, and quantify the behavior under \emph{all initial conditions and parameters}, not just the very restricted ones considered in the literature before. 
For references of related concepts and approaches in dynamical systems employed in this paper, see, e.g.,~\cite{palis2012geometric}. 

Next, we study a system consisting of all the weights but of low dimension. For the resulting system of four differential equations, we aim to provide \emph{an exhaustive qualitative description} of the gradient flow, formulae for its critical points, and establish their character.

\subsection{Our Contributions}

The main contributions of this paper include the following.

\begin{enumerate}
\item In Theorem~\ref{thm:differentialEquations}, we present a complete system of differential equations for the gradient flow that represents the ICL training process with the linear attention function. Compared to previous treatments, see, e.g.~\cite{zhang2023trained} our arguments are a conceptually straightforward approach. This simplicity is a critical enabler for characterizing the dynamics in full generality. This may allow further analysis to be conducted to include all weight matrices. 
\item
In Theorem~\ref{thm:critical_points}, we provide a more detailed and general analysis of the structural and stability properties of a simplified system that appeared in~\cite{zhang2023trained} and many follow-up research works. More specifically, we are able to characterize the invariant set for all the initial conditions and parameters, identify all the critical points, and quantify the behavior in their neighborhoods. Together with the gradient descent character of dynamics, this qualitatively describes the global behavior of the system. The analysis is conducted for the full spectrum of parameters with any initial conditions, and we are able to classify different behaviors as the parameters and initial conditions vary, see Remark~\ref{rem:global}. 
\item
In Theorems~\ref{thm:1DCriticalPoints} and~\ref{thm:1DCharacter}, we illustrate the characterization of the system by a detailed analysis of an example of low dimension. More specifically, we are able to identify the basic structure of the invariant manifolds, the critical points, and their qualitative behavior. We note qualitative similarities between this system and the simplified one.
\end{enumerate}

\subsection{Previous Works}

Motivated by the exceptional numerical performance of transformers in natural language processing, a crucial first step is to understand the principles underlying their ability to extract latent features from data samples. One particularly remarkable feature is their few-shot learning capability, where pre-trained models adapt to additional context provided in the input without requiring weight updates. This has spurred a line of research into the interpretability of transformer-based models, with studies exploring transformers as decision-makers \cite{lin2023transformers}, max-margin token selectors \cite{ataee2023max}, approximations of Kalman filters \cite{goel2023can}, and nearest-neighbor learners \cite{li2024one}, among other frameworks.

Building on the exploration of how transformers learn in-context representations, recent studies reveal that transformer structures operate as learning algorithms for prediction, highlighting their capacity for implicit fine-tuning of pre-trained models during inference \cite{oswald23}. For instance, \cite{ahn2023transformers} presents a comprehensive characterization of the global optimum of a single-layer linear transformer trained on i.i.d. Gaussian data. The findings demonstrate that the transformer architecture functions as a preconditioned gradient descent for linear regression models, bridging its architecture with optimization principles. Moreover, \cite{akurek2023what} examines the statistical properties of transformers, showing that trained in-context learners align closely with predictors obtained via gradient descent, ridge regression, or least squares regression. The specific alignment depends on factors such as the depth of the transformer and the noise characteristics of the dataset. These insights collectively illustrate the versatility of transformers in implementing diverse machine learning algorithms, furthering our understanding of their role as adaptive predictive mechanisms.

Even though these results provide valuable insights into the expressiveness of learned transformers across various scenarios, a more fundamental question remains unanswered: \emph{Is gradient descent on transformer model parameters sufficient to find optimal solutions to the loss function?} Initial studies have begun exploring the training dynamics of transformers for specific input settings. For example, \cite{tian2023scan} demonstrates that a self-attention mechanism within a one-layer transformer augmented with a decoder layer functions as a discriminative scanning algorithm, assigning attention scores based on patterns of co-occurrence between key and query tokens during training. This analysis was later extended to multi-layer transformers in \cite{tian2023joma}, revealing the sparse nature of attention weights in hierarchical structures. While these foundational works offer an understanding of parameter convergence during training, they fall short of determining whether this interpretable learning mechanism aligns with the optimality of minimizing the loss function from an optimization perspective. Bridging this gap is crucial for comprehending the role of transformer modules in pre-trained language models.
 
One of the earliest studies on transformer optimization dynamics focuses on the one-layer linear transformer architecture \cite{zhang2023trained}. It examines the gradient flow of model updates and concludes that gradient descent can achieve the global optimality of model parameters despite the nonconvexity of the loss function, provided certain conditions are met. These conditions include specific parameter initialization distributions and the exclusion of a subset of parameters during optimization. Building on this result, subsequent research explores the impact of incorporating a multi-layer perceptron layer into the transformer block, demonstrating that the infinite-dimensional attention landscape is benign and that mean-field dynamics ensure global convergence rates.  Additionally, a reduced parameter space of attention models is studied in \cite{huang2023context} using cross-entropy loss, where data is assumed to be generated by orthogonal vectors. This work also addresses nonlinear attention models, showing that gradient descent achieves global optimality at a sublinear convergence rate. More recent research \cite{yang2024context} investigates the training dynamics of one-layer multi-head transformers, establishing that the training loss under gradient descent converges to the global minimum at a linear rate for certain nonlinear regression tasks \cite{kim2024transformers}. Notably, the multi-head soft attention mechanism exhibits a ``task allocation'' phenomenon, effectively distributing attention heads across tasks in multi-task linear regression.  Finally, \cite{chen2024training} maps gradient flow dynamics in parameter space to spectral dynamics in the eigenspace of data features. This analysis reveals that each attention head specializes in optimally handling a single task in multi-task settings, even when the loss function is nonconvex. These findings collectively highlight the inherent special and benign structures of the loss functions built upon transformer architectures, which makes gradient descent capable of finding optimal solutions under various structural and data assumptions.

Moving beyond Gaussian data, a more practical Markov sequence is considered in \cite{makkuva2024local}, showing that the gradient flow of transformer parameters trained on next-token prediction loss can converge to either global or local minima, depending on initialization and the properties of the Markovian data. However, these results apply only to binary inputs. A more realistic $n$-gram Markov chain model is introduced in \cite{chen2024training}, which helps reveal the ``induction head'' mechanism of a two-attention-layer transformer by analyzing the convergence of gradient flow with respect to a cross-entropy ICL loss during training. Although the contributions of each building block of this transformer module—namely, the first-attention layer, feed-forward layer, and second-attention layer—are characterized, the training dynamics are analyzed unrealistically, where the model parameters of each block are updated entirely before switching to the next block, solely for the convenience of theoretical analysis.

Beyond single-layer models, under a set of conditions on the problem settings—including restrictive data distribution assumptions and specific distributions for model parameter initializations—\cite{gatmiry2024can} demonstrates that the looped attention model can converge to a near-global minimum of the loss function. This is achieved by characterizing the gradient flow of model updates for a sufficiently large number of samples. Despite considering more complex transformer architectures, the fundamental question of characterizing the training dynamics of model parameters in general settings remains unanswered. Therefore, it is highly motivated to investigate the gradient flow dynamics of the original linear attention model as discussed in \cite{zhang2023trained}.

\subsection{Organization of the Paper}

The preliminary results and notation are presented in Sec.~\ref{sec:prelim}. The complete set of differential equations is presented and derived in Sec.~\ref{sec:differential_equations}. The analysis of a simplified system is presented in Sec.~\ref{sec:analysis_uU}. The analysis of a low-dimension system is presented in Sec.~\ref{sec:analysis_1D}. The paper is concluded in Sec.~\ref{sec:conclusions} with a summary of the main results and future research directions.

\section{Preliminaries and Notations}
\label{sec:prelim}

In this section, we will provide the basic setup of the optimization in training of ICL.

For $k=1,2,\ldots,N$, $x_{k}$ represents an independent and identically distributed $d$-variate normal random variable with mean zero and $d\times d$ covariance matrix $\Lam$, that is, they follow the distribution of $N(0,\Lam)$, and $x_k^{(i)}$ represents the $i$-th component of the $d$-dimensional vector. $x_{q}$ is assume to follow the same distribution $N(0,\Lam)$. $w$ follows a standard $d$-variate normal, $N(0, I_d)$. For each $i=1,2,\ldots, d$, $w^{(i)}$ represents the $i$-th component of the $d$-dimensional vector,
\begin{align*}
x_k=&\left(\begin{array}{c}x_k^{(1)} \\ x_k^{(2)}\\ \cdot \\ \cdot \\ \cdot \\x_k^{(d)}
\end{array}\right), \quad w=\left(\begin{array}{c}w^{(1)} \\ w^{(2)}\\ \cdot \\ \cdot \\ \cdot \\w^{(d)}
\end{array}\right).
\end{align*}
A $(d+1)\times (N+1)$ matrix $E$ then can be defined as, 
\begin{align*}
E=\left( \begin{array}{ccccc} x_{1}&x_{2}& \ldots &x_{N}& x_{q}\\ y_{1}&y_{2}& \ldots &y_{N}&  0 \end{array}\right)
\end{align*}
with $y_k:=w^\top x_k$, $k=1,2,\ldots,N$. A $(d+1)^2\times(d+1)^2$ matrix $H$ is defined as $H:= X\otimes (EE^\top)$ with $X=(0, x_q;x_q^\top,0)$, where $\otimes$ denotes the Kronecker product.
Therefore, 
\begin{align*}
H=\left(\begin{array}{cc} 0 & x_q\otimes (EE^\top) \\x_q^\top\otimes (EE^\top) & 0 \end{array}\right).
\end{align*}

For random $(d+1)^2\times (d+1)^2$ matrix $H$ and scalar $A=\sum_{m=1}^d w^{(m)} x_q^{(m)}$, the loss function to be minimized $L(\xi):\Real^{(d+1)^2}\ra \Real$ is defined as,
\begin{align}
\label{eqn:lost_function}
L(\xi)=\ex[\mathbf{L}(\xi)]=\frac{1}{4}\ex[(\xi^\top H \xi-A)^2]
\end{align}
variable $\xi$ is the vectorization of the matrix, $\xi=Vec(U,\bz;Z^\top,v)$, 
with $U$ being a $d\times d$ matrix ($U=(U_{ij})^d_{i,j=1}$ as usual), $\bz$ and $Z$ are $d$ dimensional vectors and $v$ a scalar.

\begin{rem}

Here, we provide some background information on how the above setup of the problem is related to training in in-context learning. More detailed background can be found in~\cite{zhang2023trained} and references therein.

In-Context learning models train on the sequences of input-output pairs. These sequences are termed \emph{prompts}, and appear in the form of $E$ with $x_i$ being the input and $y_i$ the output for $i=1,2,\ldots, N$, and the learning task is to predict the outcome for the input $x_{query}$.

Attention function is assumed to take the form of
\begin{align*}
f_A(E: W^K, W^Q, W^V, W^P) = E+ W^PW^VE\cdot \hbox{softmax}((W^KE)^TW^QE),
\end{align*}
where weight matrices include: 1) key weight matrix $W^K$ of dimension $d\times(d+1)$; 2) query weight matrix $W^Q$ of dimension $d\times(d+1)$; 3) value weight matrix $W^V$ of dimension $d\times(d+1)$; and 4)project matrix $W^P$ of dimension $(d+1)\times d$. This is further simplified to be the linear attention model of $f_A$.
\begin{align*}
f_{LA}(E: W^K, W^Q, W^V, W^P) = E+ W^{PV}E\cdot (E)^TW^{KQ}E). 
\end{align*}
Thus, $f_A(E: W^K, W^Q, W^V, W^P)$ has the same dimension as $E$, and its $(N+1)\times (N+1)$ element represents the predicted value. In the training process, it is assumed that the inputs $x_n, n=1$ and the outputs $x_n, n=1$ are drawn according to the prescribed distributions. The neural network minimizes the sample average version of the loss function~\eqref{eqn:lost_function},
\begin{align*}
\frac{1}{B} \sum_{i=1}^B \left((f_A(E^i))_{(N+1)\times (N+1)} - f^i(x^i_{N+1})\right)^2.
\end{align*}
\end{rem}
In the ICL training process of, the loss function is minimized through various gradient descent algorithm implementations, which are discretized gradient flows. Therefore, to shed light on the behavior of the training process, we investigate the gradient flow associated with the loss function minimization.

\section{System of Differential Equations Representing the Gradient Flows}
\label{sec:differential_equations}

In this section, we will derive the complete system of differential equations for the gradient flow. The basic approach we take is to write the loss function in summation form, instead of the matrix form that has been used in most of the previous literature. Although they appear to be less concise, their calculations are straightforward. The main elements of the calculations involved are exchange of the orders of summations, taking expectations sequentially, and higher (fourth and sixth) moments of multivariate normal variables. 

\subsection{Summation Form of the Loss Function}

The key term in the loss function, $\xi^\top H \xi$, represents the prediction ${\hat y}_{query}$. Here we present an alternative expression in summation form instead of matrix form. Note that it is consistent with the original expression of the prediction, see equation (3.6) in~\cite{zhang2023trained}, more precisely,
\begin{align}
\label{eqn:prediction_alt}
\xi^\top H \xi={\hat y}_{query}=\frac{1}{N}\left[((u_{12})^\top, u_{-1})(EE^\top)\left(\begin{array}{c} U_{11} \\ (u_{21})^\top\end{array}\right) x_{query}\right],
\end{align}
with $((u_{12})^\top, u_{-1})$ represents $W^{PV}$, and $\left(\begin{array}{c} U_{11} \\ (u_{21})^\top\end{array}\right)$ represents $W^{KQ}$.

It is evident to see that in our setup, the variable $(U,\bz, Z,v)$ represents $(U,u_{12}, u_{21}, u_{-1})$ in~\eqref{eqn:prediction_alt}. Therefore,
\begin{lem}
\label{lem:predictor_I_tendor}
\begin{align}
{\hat y}_{query}=&\frac{1}{N}\sum_{n=1}^N\sum_{i=1}^d \sum_{j=1}^d\sum_{k=1}^d(z_i  U_{jk}+z_iZ_k w_j+vw_iU_{jk}+vZ_kw_iw_j)x^{(i)}_n x_n^{(j)}x^{(k)}_q \nonumber \\ &+\frac{1}{N}\sum_{i=1}^d\sum_{j=1}^d \sum_{k=1}^d (z_iU_{jk}) x_q^{(j)}x_q^{(i)}  x_q^{(k)}.\label{eqn:predictor_I_tendor}
\end{align}
\end{lem}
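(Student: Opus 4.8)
The plan is to take the closed-form expression \eqref{eqn:prediction_alt} for ${\hat y}_{query}$ as the starting point and to expand the matrix products into coordinates, substituting $y_n=w^\top x_n$ along the way. Under the identification $(U,\bz,Z,v)=(U_{11},u_{12},u_{21},u_{-1})$, the row vector $((u_{12})^\top,u_{-1})$ becomes $(\bz^\top,\,v)$ and the $(d+1)\times d$ matrix $\left(\begin{array}{c}U_{11}\\(u_{21})^\top\end{array}\right)$ becomes the block column with $U$ stacked over $Z^\top$, so that
\begin{align*}
{\hat y}_{query}=\frac1N\,(\bz^\top,\,v)\,(EE^\top)\left(\begin{array}{c}U\\Z^\top\end{array}\right)x_q.
\end{align*}

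First I would record $EE^\top$ in the $2\times2$ block form induced by splitting the rows of $E$ into the first $d$ and the last one: the top-left $d\times d$ block is $\sum_{n=1}^N x_n x_n^\top + x_q x_q^\top$, the two off-diagonal blocks are $\sum_{n=1}^N y_n x_n$ and its transpose, and the bottom-right scalar is $\sum_{n=1}^N y_n^2$. Evaluating $\left(\begin{array}{c}U\\Z^\top\end{array}\right)x_q$ (top $d$ entries $Ux_q$, last entry $Z^\top x_q$) and carrying out the remaining two multiplications expresses ${\hat y}_{query}$ as $\frac1N$ times the sum of five scalars: $\bz^\top\big(\sum_n x_n x_n^\top\big)Ux_q$, $\bz^\top\big(x_q x_q^\top\big)Ux_q$, $\bz^\top\big(\sum_n y_n x_n\big)(Z^\top x_q)$, $v\big(\sum_n y_n x_n^\top\big)Ux_q$, and $v\big(\sum_n y_n^2\big)(Z^\top x_q)$.

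Next, in each term I would expand every contraction into components — $x_n^\top U x_q=\sum_{j,k}U_{jk}x_n^{(j)}x_q^{(k)}$, $\bz^\top x_n=\sum_i z_i x_n^{(i)}$, and likewise for the pure-$x_q$ pieces — and substitute $y_n=\sum_m w^{(m)}x_n^{(m)}$. The second scalar yields exactly the pure-$x_q$ sum $\frac1N\sum_{i,j,k}z_iU_{jk}\,x_q^{(i)}x_q^{(j)}x_q^{(k)}$ of \eqref{eqn:predictor_I_tendor}, while the first, third, fourth and fifth scalars produce respectively the monomials $z_iU_{jk}$, $z_iZ_kw_j$, $vw_iU_{jk}$ and $vZ_kw_iw_j$ multiplying $x_n^{(i)}x_n^{(j)}x_q^{(k)}$; adding these over $n$ and over $i,j,k$ gives the claimed identity.

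This is a direct computation with no genuine obstacle; the only points that require care are the block decomposition of $EE^\top$ and keeping track of which index slot the factor $w$ arising from $y_n=w^\top x_n$ occupies in each term — the ``$j$'' slot in the third scalar, the ``$i$'' slot in the fourth, and both in the fifth — so that the four coefficients emerge in precisely the stated form. As an independent check one can skip \eqref{eqn:prediction_alt} altogether and expand $\xi^\top H\xi$ directly from $H=X\otimes(EE^\top)$ and $\xi=Vec(U,\bz;Z^\top,v)$, which reproduces the same five scalars.
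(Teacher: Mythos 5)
Your proposal is correct and follows essentially the same route as the paper: both expand the matrix product in \eqref{eqn:prediction_alt} into coordinates via the block structure of $EE^\top$ and then substitute $y_n=w^\top x_n$; the five scalar terms you identify are exactly the five summands of the paper's intermediate expression \eqref{eqn:predictor_I}. The only cosmetic difference is the order of multiplication (you contract with the right factor first, the paper with the left), which changes nothing.
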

\begin{proof}
\begin{align*}
&((u_{12})^\top, u_{-1})(EE^\top)
\\=& ((\bz)^\top, u_{-1})(EE^\top) 
\\=&\left(\sum_{n=1}^{N,q}\sum_{i=1}^d z_i x_n^{(1)}x_n^{(i)}+v\sum_{n=1}^Ny_n x_n^{(1)},  \ldots, \sum_{n=1}^{N,q}\sum_{i=1}^d z_i x_n^{(d)}x_n^{(i)}+v\sum_{n=1}^Ny_n x_n^{(d)},\right.
\\ & \quad
\left.\sum_{n=1}^N\sum_{i=1}^d z_i y_n x_n^{(i)}+v\sum_{n=1}^Ny_n y_n\right)
\\=&\underbrace{\left(\sum_{n=1}^{N,q}\sum_{i=1}^d z_i x_n^{(1)}x_n^{(i)},  \ldots, \sum_{n=1}^{N,q}\sum_{i=1}^d z_i x_n^{(d)}x_n^{(i)},\sum_{n=1}^N\sum_{i=1}^d z_i y_n x_n^{(i)}\right)}_{I}
\\ & + \underbrace{v\left(\sum_{n=1}^Ny_n x_n^{(1)},  \ldots, \sum_{n=1}^Ny_n x_n^{(d)},\sum_{n=1}^Ny_n y_n\right)}_{II}.
\end{align*}
Continue with the calculations, 
\begin{align*}
&I\left(\begin{array}{c} U  \\ Z^\top\end{array}\right) x_{q} 
\\=&\left(\sum_{n=1}^{N,q}\sum_{i=1}^d z_i x_n^{(1)}x_n^{(i)},  \ldots, \sum_{n=1}^{N,q}\sum_{i=1}^d z_i x_n^{(d)}x_n^{(i)},\sum_{n=1}^N\sum_{i=1}^d z_i y_n x_n^{(i)}\right)\left(\begin{array}{c} U  \\ Z^\top\end{array}\right) x_{q} 
\\ =& \sum_{n=1}^{N,q}\sum_{i=1}^d\sum_{j=1}^d \sum_{k=1}^d z_i x_n^{(j)}x_n^{(i)} U_{jk} x_q^{(k)}+ \sum_{n=1}^N\sum_{i=1}^d \sum_{k=1}^d z_iZ_k y_n x_n^{(i)}x^{(k)}_q.
\\ =& \sum_{n=1}^{N}\sum_{i=1}^d\sum_{j=1}^d \sum_{k=1}^d z_i x_n^{(j)}x_n^{(i)} U_{jk} x_q^{(k)}+ \sum_{n=1}^N\sum_{i=1}^d \sum_{k=1}^d z_iZ_k y_n x_n^{(i)}x^{(k)}_q+\sum_{i=1}^d\sum_{j=1}^d \sum_{k=1}^d z_i x_q^{(j)}x_q^{(i)} U_{jk} x_q^{(k)}.
\end{align*}
And
\begin{align*}
&II\left(\begin{array}{c} U  \\ Z^\top\end{array}\right) x_{q} 
\\=&v\left(\sum_{n=1}^Ny_n x_n^{(1)},  \ldots, \sum_{n=1}^Ny_n x_n^{(d)},\sum_{n=1}^Ny_n y_n\right)\left(\begin{array}{c} U  \\ Z^\top\end{array}\right) x_{q}
\\=&v\sum_{n=1}^N\sum_{j=1}^d \sum_{k=1}^dy_n x_n^{(j)}U_{jk} x_q^{(k)}+ v\sum_{n=1}^N\sum_{k=1}^dZ_ky_n y_nx^{(k)}_q.
\end{align*}
Therefore,
\begin{align}
&N{\hat y}_{query}\nonumber \\=& (I+II)\left(\begin{array}{c} U  \nonumber \\ Z^\top\end{array}\right) x_{q} 
\\=&\sum_{n=1}^N\sum_{i=1}^d\sum_{j=1}^d \sum_{k=1}^d z_i x_n^{(j)}x_n^{(i)} U_{jk} x_q^{(k)}+ \sum_{n=1}^N\sum_{i=1}^d \sum_{k=1}^d z_iZ_k y_n x_n^{(i)}x^{(k)}_q\nonumber \\&+v\sum_{n=1}^N\sum_{j=1}^d \sum_{k=1}^dy_n x_n^{(j)}U_{jk} x_q^{(k)}+ v\sum_{n=1}^N\sum_{k=1}^dZ_ky_n y_nx^{(k)}_q\nonumber \\&+\sum_{i=1}^d\sum_{j=1}^d \sum_{k=1}^d z_i x_q^{(j)}x_q^{(i)} U_{jk} x_q^{(k)}.\label{eqn:predictor_I}
\end{align}
Therefore, the desired expression in~\eqref{eqn:predictor_I_tendor} follows after we plug in the linear expression of the $y$ variable in~\eqref{eqn:predictor_I}.
\end{proof}

\begin{rem}
To see that~\eqref{eqn:prediction_alt} is $(1/2) \xi^\top H\xi$ intuitively, just consider the special case of $d=n=1$. Then,~\eqref{eqn:prediction_alt} can be written as,
\begin{align*}
&(u_{12}, u_{-1})(EE^\top)\left(\begin{array}{c} U_{11} \\ u_{21}\end{array}\right) x_{query}\\ =& (u_{12}, u_{-1})(x_{query}\otimes(EE^\top))\left(\begin{array}{c} U_{11} \\ u_{21}\end{array}\right) \\ =& \frac12(u_{12}, u_{-1}, U_{11}, u_{21})\left(\begin{array}{cc} 0& x_{query}\otimes(EE^\top) \\ x_{query}\otimes(EE^\top) &0\end{array}\right)\left(\begin{array}{c} u_{12}\\ u_{-1}\\U_{11} \\ u_{21}\end{array}\right)
\end{align*}
where $(EE^\top)$ is a $2\times 2$ matrix, and $x_{query}$ is a scalar.
\end{rem}

\begin{rem}
\label{rem:simplied}
Under the assumption that $u_{12}=u_{21}=0$, the predictor, and consequently the loss function and gradient flows, takes a simplified form. Note that this is the system studied in~\cite{zhang2023trained}, where convergence results are obtained for certain initial conditions and parameters.
\begin{align*}
((u_{12})^\top, u_{-1})(EE^\top)= & u_{-1}((EE^\top)_{d+1,1}, (EE^\top)_{d+1,2} \ldots, (EE^\top)_{d+1,d+1})\\=&u_{-1}\left(\sum_{n=1}^Ny_n x_n^{(1)}, \sum_{n=1}^Ny_n x_n^{(2)} \ldots, \sum_{n=1}^Ny_n x_n^{(d)},\sum_{n=1}^Ny_n y_n\right).
\end{align*}
Therefore, 
\begin{align}
\label{eqn:predictor_simplified}
{\hat y}_{query}=\frac{1}{N}\sum_{i=1}^{d}\sum_{j=1}^{d}\sum_{n=1}^{N} vU_{ij} x_q^{(i)}y_nx_n^{(j)},
\end{align}
with $v$ represents $u_{-1}$ and $U$ represents $U_{11}$. 
\end{rem}

\subsection{Gradient Calculations, through an Index Approach}
\label{sec:derivative_calculations}

The format of the loss function in~\eqref{eqn:lost_function}, in conjunction with the expression of the predictor~\eqref{eqn:predictor_I_tendor}, allow us to derive explicit expressions for the derivative of the loss function with respect to all the variables, namely the scalar $v$, the vectors $z,Z$, and the matrix $U$ which are presented in this section. 

For expression of the scalar $\frac{\partial L}{\partial v}$, straightforward calculations from equations~\eqref{eqn:lost_function} and ~\eqref{eqn:predictor_I_tendor} produce,
\begin{align*}
\frac{\partial L}{\partial v}= &\ex\left[\frac{1}{N}\sum_{n=1}^N\sum_{i=1}^d \sum_{j=1}^d\sum_{k=1}^d(z_i  U_{jk}+z_iZ_k w_j+vw_iU_{jk}+vZ_kw_iw_j)x^{(i)}_n x_n^{(j)}x^{(k)}_q \right.\\ &\left. +\frac{1}{N}\sum_{i=1}^d\sum_{j=1}^d \sum_{k=1}^d (z_iU_{jk}) x_q^{(j)}x_q^{(i)}  x_q^{(k)}-\sum_{m=1}^d w_mx_q^{(m)}\right]\\  &\times \left(\frac{1}{N}\sum_{n=1}^N\sum_{i=1}^d \sum_{j=1}^d\sum_{k=1}^d(w_iU_{jk}+Z_kw_iw_j)x^{(i)}_n x_n^{(j)}x^{(k)}_q\right). 
\end{align*}
Examine each term, we see that, 
\begin{lem}
\label{lem:dldv}
\begin{align}
\frac{\partial L}{\partial v} = &  \frac{1}{N}v\tr(U\Lam U^\top \Lam) \tr(\Lam) + \frac{N+1}{N} v \tr(U\Lam U^\top \Lam^2)+\frac{N+2}{N}(z^\top\Lam U\Lam Z)\tr(\Lam) \nonumber
\\ &+\frac{N+3}{N} z^\top\Lam^2 U\Lam Z\nonumber
 +\left[\frac{2(N+3)}{N}\tr(\Lam^2)
+ \tr^2(\Lam)\right]vZ^\top \Lam Z  \\ & +\frac{1}{N} [z^\top \Lam U^\top\Lam Z + z^\top \Lam Z\tr(U\Lam)]\tr(\Lam)
-\tr(\Lam U\Lam).\label{eqn:dldv}
\end{align}
\end{lem}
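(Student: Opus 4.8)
The plan is to compute $\partial L/\partial v$ directly from the two displayed expressions, namely the definition $L(\xi)=\tfrac14\ex[(\xi^\top H\xi - A)^2]$ in~\eqref{eqn:lost_function} together with the summation form of the predictor $\hat y_{query}=\xi^\top H\xi$ in~\eqref{eqn:predictor_I_tendor}, and then reduce the resulting expectation to the traces appearing in~\eqref{eqn:dldv}. Differentiating under the expectation gives
\[
\frac{\partial L}{\partial v}=\ex\!\left[(\hat y_{query}-A)\,\frac{\partial \hat y_{query}}{\partial v}\right],
\]
and from~\eqref{eqn:predictor_I_tendor} only the two terms carrying a factor $v$ survive the differentiation, so $\partial\hat y_{query}/\partial v=\frac1N\sum_{n,i,j,k}(w_iU_{jk}+Z_kw_iw_j)x_n^{(i)}x_n^{(j)}x_q^{(k)}$, exactly the last factor written in the pre-Lemma display. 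The whole computation is therefore the evaluation of a single expectation of a product of two multilinear expressions in the independent Gaussian vectors $x_1,\dots,x_N$, $x_q$ and $w$.

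The key organizational step is to exploit independence and take expectations in stages: first average over the $x_n$ (which appear only through second-order products $x_n^{(i)}x_n^{(j)}$ summed over $n$, giving factors of $N\Lam$ for distinct pairs of sums and $N(N\!-\!1)\Lam\otimes$-type plus a fourth-moment correction when two index-sums share the same $n$), then over $x_q$ (second, fourth, and sixth moments of a single $N(0,\Lam)$ vector, evaluated by Isserlis/Wick), and finally over $w\sim N(0,I_d)$ (which enters through $w_i$, $w_iw_j$, and products with $A=\sum_m w^{(m)}x_q^{(m)}$, again handled by Wick). Throughout one repeatedly uses $\ex[x^{(i)}x^{(j)}]=\Lam_{ij}$ and the Gaussian moment identities $\ex[x^{(i)}x^{(j)}x^{(k)}x^{(\ell)}]=\Lam_{ij}\Lam_{k\ell}+\Lam_{ik}\Lam_{j\ell}+\Lam_{i\ell}\Lam_{jk}$, then collapses the contracted index sums into traces and bilinear forms such as $\tr(U\Lam U^\top\Lam)$, $\tr(U\Lam U^\top\Lam^2)$, $z^\top\Lam U\Lam Z$, $Z^\top\Lam Z$, and $\tr(\Lam U\Lam)$. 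Careful bookkeeping of which of the $N^2$ pairs $(n,n')$ coincide is what produces the $N$-dependent coefficients $\frac1N$, $\frac{N+1}{N}$, $\frac{N+2}{N}$, $\frac{N+3}{N}$, $\frac{2(N+3)}{N}$, and the $\tr^2(\Lam)$ term.

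The main obstacle is precisely this combinatorial bookkeeping: the product $(\hat y_{query}-A)\cdot\partial_v\hat y_{query}$ expands into many monomials (the $\hat y_{query}$ side alone has four ``prompt'' terms plus one pure-$x_q$ term and the $-A$ term, each multiplied by the two terms of $\partial_v\hat y_{query}$), and for each monomial one must correctly separate the diagonal ($n=n'$) from off-diagonal ($n\neq n'$) contributions of the double sum over prompt indices, since the diagonal terms bring in fourth moments of the $x_n$ that generate the $+1,+2,+3$ shifts in the coefficients, while the sixth-moment terms from $x_q$ (in the pure-$x_q$ piece paired with the $w_iU_{jk}$ term) produce several of the $\tr(\Lam)$-weighted contributions. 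I would verify the final collection of terms by a dimension check ($d=1$, $N=1$) against the direct scalar computation. Once the moment expansions are in place, each surviving index contraction is a routine identification of a matrix trace, and summing the pieces yields~\eqref{eqn:dldv}.
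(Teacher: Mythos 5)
Your proposal matches the paper's own proof: the paper computes $\partial L/\partial v=\ex[(\hat y_{query}-A)\,\partial_v\hat y_{query}]$ with $\partial_v\hat y_{query}=\frac1N\sum_{n,i,j,k}(w_iU_{jk}+Z_kw_iw_j)x_n^{(i)}x_n^{(j)}x_q^{(k)}$, expands the product into monomials, kills the odd-in-$w$ terms by symmetry, and evaluates the rest by staged expectations using $\ex[w_iw_j]=\delta_{ij}$ and Isserlis' theorem, with the $n=n'$ versus $n\neq n'$ split producing the $N$, $N(N\pm1)$ coefficients exactly as you describe. This is the same route, so no further comparison is needed.
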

The proof can be found in Sec.\ref{sec:Dv}. The proofs of lemmas~\ref{lem:dldz},~\ref{lem:dldZ}, and~\ref{lem:dldU} utilize the same arguments, therefore, they are collected in the supplement. 
\begin{rem}
The terms $\frac{1}{N}v\tr(U\Lam U^\top \Lam) \tr(\Lam) + \frac{N+1}{N} v \tr(U\Lam U^\top \Lam^2)$ are the same cubic terms relate to $v$ and $U$ as those in the case of $z=Z=0$; The term $-\tr(\Lam U\Lam)$ is the linear term; all the other terms provide extra cubic terms for other combinations involving $z$ and $Z$.
\end{rem}

For the expression of the vector $\frac{\partial L}{\partial \bz}$, equations~\eqref{eqn:lost_function} and ~\eqref{eqn:predictor_I_tendor} give us, for any $\ell=1,2,\ldots, d$, 
\begin{align*}
\frac{\partial L}{\partial z_\ell}= &\ex\left[\frac{1}{N}\sum_{n=1}^N\sum_{i=1}^d \sum_{j=1}^d\sum_{k=1}^d(z_i  U_{jk}+z_iZ_k w_j+vw_iU_{jk}+vZ_kw_iw_j)x^{(i)}_n x_n^{(j)}x^{(k)}_q \right.\\ &\left. +\frac{1}{N}\sum_{i=1}^d\sum_{j=1}^d \sum_{k=1}^d (z_iU_{jk}) x_q^{(j)}x_q^{(i)}  x_q^{(k)}-\sum_{m=1}^d w_mx_q^{(m)}\right]\\  &\times \left(\frac{1}{N}\sum_{n=1}^N \sum_{j=1}^d\sum_{k=1}^d(U_{jk}+Z_kw_j)x^{(\ell)}_n x_n^{(j)}x^{(k)}_q +\frac{1}{N}\sum_{j=1}^d \sum_{k=1}^d U_{jk} x_q^{(j)}x_q^{(\ell)}  x_q^{(k)}\right).
\end{align*}
Therefore, 
\begin{lem}
\label{lem:dldz}
\begin{align}
\left(\frac{\partial L}{\partial z}\right)_\ell = & 
\frac{1}{N}(z^\top\Lam)_\ell\tr(U\Lam U^\top \Lam)+ \frac{N+2}{N} (z^\top\Lam U\Lam U^\top \Lam)_\ell \nonumber
\\ &+\frac{1}{N}(z^\top\Lam U \Lam)_\ell \tr(U\Lam)+ \frac{1}{N}(z^\top\Lam U \Lam U\Lam)_\ell \nonumber
\\&+\frac{1}{N} (z^\top \Lam)_\ell\tr(\Lam) (Z^\top\Lam Z)+\frac{N+1}{N}(z^\top \Lam^2)_\ell (Z^\top\Lam Z) \nonumber
\\&+\frac{2}{N} 
(vZ^\top\Lam U^\top \Lam)_\ell \tr(\Lam)+\frac{N+1}{N} (vZ^\top\Lam U^\top\Lam^2)_\ell \nonumber
\\ &+\frac{N+1}{N} ( vZ^\top\Lam U^\top\Lam^2)_\ell\nonumber
\\ &+
\frac{1}{N}(vZ^\top\Lam)_\ell \tr(\Lam) \tr (U\Lam)+\frac{2}{N}(vZ^\top\Lam U\Lam)_\ell \tr(\Lam)  \nonumber
\\&+\frac{1}{N}(z^\top \Lam U\Lam U^\top\Lam)_\ell+\frac{1}{N}(z^\top \Lam U^\top\Lam U^\top\Lam)_\ell+ \frac{1}{N}(z^T\Lam U^\top\Lam^\top)_\ell \tr(U\Lam) \nonumber
\\&+\frac{1}{N^2}[2(z^\top \Lam U\Lam)_\ell \tr(U\Lam) + 2(z^\top \Lam U \Lam U\Lam)_\ell+ 2(z^\top \Lam U \Lam U^\top \Lam)_\ell \nonumber
\\ 
&+ 2(z^\top \Lam U^\top \Lam)_\ell \tr(U\Lam) + 2(z^\top \Lam U^\top  \Lam U\Lam)_\ell+2(z^\top \Lam U^\top \Lam U^\top\Lam)_\ell \nonumber
\\ 
&+ (z^\top \Lam)_\ell \tr(U\Lam U\Lam ) +(z^\top \Lam)_\ell \tr(U\Lam U^\top \Lam ) +(z^\top \Lam)_\ell \tr(U\Lam)\tr(U\Lam)] \nonumber
\\ &-(Z^\top\Lam^2)_\ell.
\label{eqn:dldz}
\end{align}
\end{lem}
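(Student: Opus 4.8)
\textbf{Proof proposal for Lemma~\ref{lem:dldz}.}

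The plan is to differentiate the loss $L(\xi)=\frac14\ex[(\hat y_{query}-A)^2]$ with respect to $z_\ell$ directly, exploiting the explicit summation form of $\hat y_{query}$ from Lemma~\ref{lem:predictor_I_tendor}. Writing $\hat y_{query}$ as a sum of the ``in-sample'' part (the double sum over $n=1,\ldots,N$) and the ``query-only'' part (the single sum over $x_q$), and $A=\sum_m w^{(m)}x_q^{(m)}$, the chain rule gives $\partial L/\partial z_\ell = \ex[(\hat y_{query}-A)\,\partial \hat y_{query}/\partial z_\ell]$, and $\partial \hat y_{query}/\partial z_\ell$ is obtained by simply stripping the factor $z_i$ and setting $i=\ell$ in the $z$-linear monomials, which is already displayed in the excerpt just above the lemma statement. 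So the task reduces to expanding the product of two explicit multilinear forms in $x_n$, $x_q$, $w$ and taking the expectation term by term.

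The key computational steps, in order, are: (1) collect the monomials — the product $(\hat y_{query}-A)(\partial\hat y_{query}/\partial z_\ell)$ is a finite sum of terms each of the form (coefficient built from $z,Z,U,v$) times (a product of components of $x_n$, $x_q$, $w$); (2) separate by which indices are ``in-sample'' ($n$) versus query, using independence of $x_q$ and $w$ from the $x_n$, and using that the $x_n$ are i.i.d. $N(0,\Lam)$ while $w\sim N(0,I_d)$ and $y_n=w^\top x_n$; (3) apply Isserlis/Wick's theorem to evaluate the fourth- and sixth-order Gaussian moments that arise, being careful that the in-sample sums produce factors of $N$, $N(N-1)$, etc., which after dividing by the $1/N$ (or $1/N^2$) prefactors give the $\frac{1}{N}$, $\frac{N+1}{N}$, $\frac{N+2}{N}$, $\frac{1}{N^2}$ coefficients seen in~\eqref{eqn:dldz}; (4) re-assemble the surviving contractions into trace/matrix expressions — a contraction of $x_n^{(a)}x_n^{(b)}$ yields $\Lam_{ab}$, a pair of sums over a shared index becomes a matrix product, and a ``closed loop'' of contractions becomes a trace — so that, e.g., $\sum_{i,j,k}(z^\top\Lam)$-type contractions collapse to $(z^\top\Lam U\Lam U^\top\Lam)_\ell$ and similar; (5) the cross term with $-A$ contributes the single linear-in-$\Lam$ term $-(Z^\top\Lam^2)_\ell$, since $\ex[w_mw_j]=\delta_{mj}$ kills $w$ and the one surviving $x_q$-contraction pairs the free index $\ell$ with $k$ through $U$ — wait, more carefully, it produces $-(Z^\top\Lam^2)_\ell$ after the $x_q$ moments are taken.

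The bookkeeping — matching each Wick pairing to the correct matrix word and the correct combinatorial prefactor in $N$ — is the main obstacle; there is nothing deep, but the number of terms is large (the predictor already has four families of cubic monomials, and $\partial\hat y_{query}/\partial z_\ell$ has several), so the product has on the order of dozens of distinct contraction patterns, and it is easy to miss a pairing or mislabel a $U$ versus $U^\top$. I would organize the computation by first handling the purely in-sample $\times$ in-sample products (these give the leading $\tr(U\Lam U^\top\Lam)$-type and $(z^\top\Lam U\Lam U^\top\Lam)_\ell$-type terms with $N$-dependent coefficients), then the in-sample $\times$ query-only and query-only $\times$ query-only products (these give the $1/N^2$ block and the $\tr(U\Lam U\Lam)$, $\tr^2(U\Lam)$ pieces), and finally the $-A$ cross terms. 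Because Lemmas~\ref{lem:dldv}, \ref{lem:dldz}, \ref{lem:dldZ}, \ref{lem:dldU} all share this structure, once the Gaussian-moment toolbox (Isserlis plus the ``$\sum_n$ gives $N$, $\sum_{n\neq m}$ gives $N(N-1)$'' rule) is set up, each lemma is a mechanical — if lengthy — application; this is exactly why the paper defers the details to Sec.~\ref{sec:Dv} and the supplement.
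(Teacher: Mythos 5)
Your proposal follows essentially the same route as the paper's own proof (given in the supplement): expand $\ex\bigl[(\hat y_{query}-A)\,\partial\hat y_{query}/\partial z_\ell\bigr]$ in the index/summation form, kill terms with an odd number of $w$ factors, take expectations sequentially over $w$, $x_q$, and the $x_n$ using Isserlis' theorem, split the double sample sums into $n=n'$ and $n\neq n'$ parts to produce the $N$ versus $N(N-1)$ coefficients, and reassemble the surviving contractions into the matrix and trace expressions, with the $-A$ cross term yielding $-(Z^\top\Lam^2)_\ell$. The only difference is that you outline the bookkeeping rather than carrying out each of the term-by-term contractions explicitly, but the method, decomposition, and sources of every coefficient you identify match the paper's computation.
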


For the expression of the vector $\frac{\partial L}{\partial Z}$, equations~\eqref{eqn:lost_function} and ~\eqref{eqn:predictor_I_tendor} give us, for any $\ell=1,2,\ldots, d$, 
\begin{align*}
\frac{\partial L}{\partial Z_\ell}= &\ex\left[\frac{1}{N}\sum_{n=1}^N\sum_{i=1}^d \sum_{j=1}^d\sum_{k=1}^d(z_i  U_{jk}+z_iZ_k w_j+vw_iU_{jk}+vZ_kw_iw_j)x^{(i)}_n x_n^{(j)}x^{(k)}_q \right.
\\ &\left. +\frac{1}{N}\sum_{i=1}^d\sum_{j=1}^d \sum_{k=1}^d (z_iU_{jk}) x_q^{(j)}x_q^{(i)}  x_q^{(k)}-\sum_{m=1}^d w_mx_q^{(m)}\right]
\\  &\times \left(\frac{1}{N}\sum_{n=1}^N \sum_{i=1}^d\sum_{j=1}^d(z_iw_j+vw_iw_j)x^{(i)}_n x_n^{(j)}x^{(\ell)}_q \right). 
\end{align*}
Therefore, 
\begin{lem}
\label{lem:dldZ}
\begin{align}
\left(\frac{\partial L}{\partial Z}\right)_\ell = & 
\frac{2}{N} (vz^T\Lam^2U\Lam)_\ell+(vz^\top \Lam U\Lam)_\ell \tr(\Lam)\nonumber
    \\ &+ \frac{1}{N}(z^\top\Lam z Z^\top\Lam)_\ell\tr(\Lam)+ \frac{N+1}{N} (z^\top \Lam^2 z Z^\top\Lam)_\ell \nonumber
    \\& +\frac{2}{N}(vz^\top\Lam U\Lam)_\ell \tr(\Lam)+ \frac{N+1}{N}(vz^\top\Lam^2U\Lam)_\ell \nonumber
    \\ &+\frac{N+4}{N}(v^2 Z^\top\Lam)_\ell (\tr(\Lam))^2+2\frac{N+1}{N}(v^2Z^\top \Lam)_\ell \tr(\Lam^2) \nonumber
    \\&+ \frac{1}{N}(vz^\top \Lam U^\top \Lam)_\ell\tr(\Lam)+ \frac{1}{N}(vz^\top\Lam)_\ell\tr(U\Lam)\tr(\Lam)-(z^\top\Lam^2)_\ell.\label{eqn:dldZ}
\end{align}
\end{lem}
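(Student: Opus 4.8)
The plan is to start from the displayed expansion of $\partial L/\partial Z_\ell$ that immediately precedes the statement: differentiating~\eqref{eqn:lost_function} and using Lemma~\ref{lem:predictor_I_tendor} together with $A=\sum_{m=1}^d w^{(m)}x_q^{(m)}$, this is $\ex$ of the product of $(\hat y_{query}-A)$ with $\partial_{Z_\ell}\hat y_{query}=\frac1N\sum_{n=1}^N\sum_{i,j}(z_iw_j+vw_iw_j)x_n^{(i)}x_n^{(j)}x_q^{(\ell)}$. Expanding the product turns $\partial L/\partial Z_\ell$ into a finite sum of expectations of monomials in the coordinates of $w$, of $x_q$, and of the i.i.d.\ vectors $x_1,\dots,x_N$. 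I would organize the expansion by which of the five summand types in~\eqref{eqn:predictor_I_tendor} --- $z_iU_{jk}$, $z_iZ_kw_j$, $vw_iU_{jk}$, $vZ_kw_iw_j$ (each carried by $x_n^{(i)}x_n^{(j)}x_q^{(k)}$), and the pure query term $z_iU_{jk}x_q^{(j)}x_q^{(i)}x_q^{(k)}$ --- is paired with which of the two summands of $\partial_{Z_\ell}\hat y_{query}$, plus the two terms coming from $-A$.

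Next I would use the independence structure: $w\sim N(0,I_d)$ is independent of $(x_1,\dots,x_N,x_q)$, and the $x_n$ are i.i.d.\ $N(0,\Lam)$ and independent of $x_q$; hence every monomial expectation factors into a moment of $w$ times moments of the $x_n$ times a moment of $x_q$. Every term with an odd total power of $w$ vanishes, which eliminates most cross terms at once (e.g.\ $-A$ paired with the $vw_iw_j$ piece, and $z_iU_{jk}$ or $z_iZ_kw_j$ paired with $vw_iw_j$). For the survivors I would apply Isserlis'/Wick's formula: $\ex[x^{(a)}x^{(b)}]=\Lam_{ab}$, $\ex[w_aw_b]=\delta_{ab}$, $\ex[x^{(a)}x^{(b)}x^{(c)}x^{(d)}]=\Lam_{ab}\Lam_{cd}+\Lam_{ac}\Lam_{bd}+\Lam_{ad}\Lam_{bc}$ (needed both when the sample indices collide, $n=m$, and for the factor $x_q^{(i)}x_q^{(j)}x_q^{(k)}x_q^{(\ell)}$ from the query term), and the analogous three-pairing sum for $\ex[w_aw_bw_cw_d]$ in the single term (the $vZ_kw_iw_j$ piece against the $vw_iw_j$ piece) carrying four $w$'s. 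The decisive combinatorial point is that each double sample sum $\frac1{N^2}\sum_{n=1}^N\sum_{m=1}^N$ splits into a diagonal part ($N$ terms, with a fourth $x$-moment) and an off-diagonal part ($N(N-1)$ terms, with a product of two second moments); assembling these two contributions is what produces the coefficients $\tfrac1N$, $\tfrac{N+1}{N}$, $\tfrac{N+4}{N}$, $2\tfrac{N+1}{N}$ in~\eqref{eqn:dldZ}.

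Finally I would re-index each contraction of Kronecker deltas and copies of $\Lam$ back into trace and matrix--vector notation, recognizing patterns such as $\sum_{i,j,k}z_i\Lam_{ij}U_{jk}\Lam_{k\ell}=(z^\top\Lam U\Lam)_\ell$, $\sum_{i,j}\delta_{ij}\Lam_{ij}=\tr(\Lam)$, $\sum_{j,k}U_{jk}\Lam_{jk}=\tr(U\Lam)$, and the same with $U^\top$ when the Wick pairing reverses the index order; then I would sum the coefficients of the like terms and check that they collapse to the claimed expression, noting that the $-A$ contribution survives only via its pairing with the $z_iw_j$ piece and evaluates to exactly $-(z^\top\Lam^2)_\ell$. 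I expect the only real obstacle to be the bookkeeping: tracking the dozen-or-so surviving monomial families, correctly combining the diagonal/off-diagonal sample split with the Wick pairings (the four-$w$ term, which generates the $\tr^2(\Lam)$ and $\tr(\Lam^2)$ contributions, is the most delicate), and reassembling the contractions into matrix form without transpose or sign slips. Structurally the computation is identical to the one for Lemma~\ref{lem:dldv} in Sec.~\ref{sec:Dv}, only longer.
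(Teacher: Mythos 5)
Your plan coincides with the paper's own proof (carried out in the supplementary section on $\frac{\partial L}{\partial Z}$): pair each of the five summand types of~\eqref{eqn:predictor_I_tendor} and the $-A$ term against the two pieces $z_iw_j$ and $vw_iw_j$ of $\partial_{Z_\ell}\hat y_{query}$, kill the odd-$w$ monomials, apply Isserlis' theorem to the fourth moments of $w$, $x_q$, and the colliding-index $x_n$'s, and combine the diagonal ($N$) and off-diagonal ($N(N-1)$) parts of the double sample sum to get the $\frac{1}{N}$, $\frac{N+1}{N}$, $\frac{N+4}{N}$ coefficients, with the $-A$ term surviving only against $z_iw_j$ to give $-(z^\top\Lam^2)_\ell$. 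This is exactly the paper's argument, so the proposal is correct in approach.
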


For the expression of the matrix $\frac{\partial L}{\partial \bz}$, equations~\eqref{eqn:lost_function} and ~\eqref{eqn:predictor_I_tendor} give us, for $\ell, p=1,2,\ldots d$, we have, 
\begin{align*}
\frac{\partial L}{\partial U_{\ell p}}= &\ex\left[\frac{1}{N}\sum_{n=1}^N\sum_{i=1}^d \sum_{j=1}^d\sum_{k=1}^d(z_i  U_{jk}+z_iZ_k w_j+vw_iU_{jk}+vZ_kw_iw_j)x^{(i)}_n x_n^{(j)}x^{(k)}_q \right.\\ &\left. +\frac{1}{N}\sum_{i=1}^d\sum_{j=1}^d \sum_{k=1}^d (z_iU_{jk}) x_q^{(j)}x_q^{(i)}  x_q^{(k)}-\sum_{m=1}^d w_mx_q^{(m)}\right]\\  &\times \frac{1}{N}\left(\sum_{n=1}^N \sum_{i=1}^d(z_i+vw_i)x^{(i)}_n x^{(\ell)}_n x_q^{(p)}+ \sum_{i=1}^dz_i x_q^{(i)}  x_q^{(\ell)}x_q^{(p)}\right).
\end{align*}
Therefore, 
\begin{lem}
\label{lem:dldU}
\begin{align}
\frac{\partial L}{\partial U_{\ell p}}= &\frac{1}{N}(z^\top \Lam z)(\Lam U\Lam)_{\ell p}+ \frac{N+3}{N}(z^\top\Lam)_\ell (z^\top \Lam U\Lam)_p \nonumber
\\
 &+\frac{1}{N}(z^\top\Lam U\Lam z)(\Lam)_{\ell P}+\frac{1}{N}(z^\top\Lam U\Lam)_\ell (z^\top \Lam)_p  \nonumber
 \\ &+
 \frac{N+1}{N}v\tr(\Lam) (z^\top \Lam)_\ell (Z^\top\Lam)_p+ \frac{N+3}{N}v(z^\top \Lam^2)_\ell (Z^\top \Lam)_p \nonumber
 \\&+
 \frac{1}{N} v^2(\Lam U\Lam)_{\ell p}\tr(\Lam)+\frac{N+1}{N} v^2 (\Lam ^2U\Lam)_{\ell p} \nonumber
 \\ &+
 \frac{1}{N}v [(z^\top\Lam Z) \Lam_{\ell p} + 2(Z^\top \Lam )_\ell(z^\top \Lam)_p]\tr(\Lam) \nonumber
\\&+ \frac{1}{N}(z^\top\Lam)_\ell(z^\top\Lam U^\top\Lam)_p+ \frac{1}{N}(z^\top\Lam)_\ell(z^\top\Lam)_p\tr(U\Lam) \nonumber
\\&+\frac{1}{N^2}[(z^\top \Lam U\Lam z)\Lam_{\ell p}+ 2(z^\top \Lam U\Lam)_\ell (z^\top \Lam)_p+ 2(z^\top \Lam)_\ell(z^\top \Lam U\Lam)_p \nonumber
\\&+ (z^\top \Lam U^\top \Lam z) \Lam_{\ell p} + 2(z^\top \Lam)_\ell(z^\top \Lam U^\top\Lam)_p \nonumber
\\&+ (z^\top \Lam z) \tr(U\Lam) \Lam_{\ell p}+  (z^\top \Lam z)(\Lam U^\top \Lam)_{\ell p} + (z^\top \Lam z)(\Lam U \Lam)_{\ell p}\nonumber
\\ &+ 2(z^\top \Lam U^\top \Lam)_\ell (z^\top \Lam)_p + 2(z^\top \Lam)_\ell \tr(U \Lam) (z^\top \Lam)_p] \nonumber
\\ &-v(\Lam^2)_{\ell p}.\label{eqn:dldU}
\end{align}
\end{lem}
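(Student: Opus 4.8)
\emph{Proof strategy.} The plan is to carry out the same three-step computation used for Lemma~\ref{lem:dldv} (Sec.~\ref{sec:Dv}), only with a longer list of monomials. Differentiating the loss~\eqref{eqn:lost_function} and inserting the summation form~\eqref{eqn:predictor_I_tendor} of the predictor, the chain rule produces the expression for $\partial L/\partial U_{\ell p}$ written out just above the statement: the expectation of $(\hat y_{query}-A)$ against $\partial\hat y_{query}/\partial U_{\ell p}$, where $\partial\hat y_{query}/\partial U_{\ell p}=\tfrac{1}{N}\sum_{n=1}^N\sum_i(z_i+vw_i)x_n^{(i)}x_n^{(\ell)}x_q^{(p)}+\tfrac{1}{N}\sum_i z_i x_q^{(i)}x_q^{(\ell)}x_q^{(p)}$ is read off from~\eqref{eqn:predictor_I_tendor} by keeping the terms with $j=\ell$ and $k=p$. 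Multiplying the two factors out writes $\partial L/\partial U_{\ell p}$ as a finite sum of expectations of monomials in the scalar Gaussian coordinates $\{x_n^{(i)}\}_{n,i}$, $\{x_q^{(i)}\}_i$ and $\{w^{(i)}\}_i$, with coefficients polynomial in $v$, $z$, $Z$, $U$ and the entries of $\Lam$.

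Each such monomial is evaluated in three moves: (i) using the mutual independence of $x_1,\dots,x_N,x_q,w$ and the fact that each of these is centred, so that every monomial containing an odd number of factors from one of the independent blocks vanishes --- in particular the terms carrying a single $w$ survive only when paired against $-A=-\sum_m w^{(m)}x_q^{(m)}$; (ii) applying Isserlis' (Wick's) theorem to the remaining jointly Gaussian fourth- and sixth-order moments, with $\ex[x_n^{(a)}x_m^{(b)}]=\delta_{nm}\Lam_{ab}$ and $\ex[w^{(a)}w^{(b)}]=\delta_{ab}$; and (iii) resolving the internal sum over the sample index $n$ (and, for the cross terms, a double sum over $n$ and a second sample index) into its diagonal and off-diagonal parts. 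Move (iii) is what manufactures the integer prefactors $1$, $N+1$, $N+2$, $N+3$, $N+4$ alongside the overall $\tfrac{1}{N}$; the $\tfrac{1}{N^2}$ block arises from multiplying the two pure-$x_q$ pieces (the second line of~\eqref{eqn:predictor_I_tendor} against the $\tfrac{1}{N}\sum_i z_i x_q^{(i)}x_q^{(\ell)}x_q^{(p)}$ part of the derivative), whose expectation is a sixth moment of $x_q$, i.e.\ a sum over the $15$ perfect matchings of six indices. Finally each surviving index contraction is recognised as a trace or a matrix/vector product --- e.g.\ $\sum_i\Lam_{ai}\Lam_{ib}=(\Lam^2)_{ab}$, $\sum_{j,k}U_{jk}\Lam_{jk}=\tr(U\Lam)$, $\sum_{j,k}(z^\top\Lam)_j U_{jk}\Lam_{k\ell}=(z^\top\Lam U\Lam)_\ell$ --- and collecting like terms yields~\eqref{eqn:dldU}. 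A quick sanity check: the unique $U$-linear term comes from $\ex\big[(-A)\cdot\tfrac{v}{N}\sum_{n}\sum_i w^{(i)}x_n^{(i)}x_n^{(\ell)}x_q^{(p)}\big]=-v(\Lam^2)_{\ell p}$, which is the last term of~\eqref{eqn:dldU}.

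The only genuinely delicate part is move (iii). There are several distinct sixth-moment expansions --- from $z_iz_m$-type, $z_i\cdot vw$-type, and $v^2ww$-type pairings against the three-$x$ and pure-$x_q$ pieces of $\hat y_{query}$ --- the off-diagonal counts must be attributed correctly to the $N+c$ shifts, and, since $U$ need not be symmetric, one must track whether each contraction closes through $U$ or through $U^\top$; this last point is precisely why the statement carries $(\cdot\,U\,\cdot)$ terms paired with $(\cdot\,U^\top\,\cdot)$ terms. No idea beyond the proof of Lemma~\ref{lem:dldv} is required; the computation is mechanical but long, and is therefore deferred to the supplement together with those of Lemmas~\ref{lem:dldz} and~\ref{lem:dldZ}.
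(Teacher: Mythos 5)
Your strategy coincides with the paper's own proof (carried out term by term in the appendix, Sec.~\ref{sec:DU}): the derivative is written as the expectation of the residual against $\partial \hat y_{query}/\partial U_{\ell p}$, odd-$w$ monomials are killed by symmetry, Isserlis' theorem handles the remaining fourth and sixth moments, the diagonal/off-diagonal split of the double sample sum produces the $N$ versus $N(N\pm 1)$ prefactors that combine into the $\frac{N+c}{N}$ coefficients, the pure-$x_q$ sixth moment yields the $\frac{1}{N^2}$ block, and the pairing against $-A$ yields $-v(\Lam^2)_{\ell p}$. The only thing you have not done is the term-by-term bookkeeping itself --- which for a lemma whose entire content is the list of coefficients is where the actual verification lives --- but as a plan it is precisely the computation the paper executes, with every methodological ingredient correctly identified.
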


\subsection{Complete System of Differential Equations}

Combining the above derivations, we have
\begin{thm}
\label{thm:differentialEquations}
The gradient flow for the problem of minimizing the loss function given in~\eqref{eqn:lost_function} is described by a system of differential equations, $({\dot U}, {\dot v} , {\dot Z}, {\dot z}) =-(\frac{\partial L}{\partial U}, \frac{\partial L}{\partial v}, \frac{\partial L}{\partial Z}, \frac{\partial L}{\partial \bz})$ with the expressions given in Lemmas~\ref{lem:dldv},~\ref{lem:dldz},~\ref{lem:dldZ}, and~\ref{lem:dldU}.
\end{thm}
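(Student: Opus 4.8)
The statement is an immediate corollary of the four preceding lemmas: by definition the gradient flow of $L$ is the system $\dot\xi=-\nabla L(\xi)$ with $\xi=Vec(U,\bz;Z^\top,v)$, and the blocks of $\nabla L$ along the $v$-, $\bz$-, $Z$- and $U$-coordinates are exactly the quantities computed in Lemmas~\ref{lem:dldv}--\ref{lem:dldU}; concatenating them gives $({\dot U},{\dot v},{\dot Z},{\dot z})=-(\frac{\partial L}{\partial U},\frac{\partial L}{\partial v},\frac{\partial L}{\partial Z},\frac{\partial L}{\partial\bz})$. So the real content lies in those lemmas, and the plan is to prove all four by one common template. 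First I would observe that the integrand $\mathbf L(\xi)=\frac14({\hat y}_{query}-A)^2$ is a polynomial in the jointly Gaussian data $(x_1,\dots,x_N,x_q,w)$, so all moments are finite and one may differentiate under $\ex[\cdot]$; the chain rule then gives, for each scalar coordinate $\theta\in\{v,z_\ell,Z_\ell,U_{\ell p}\}$, an expression proportional to $\ex[({\hat y}_{query}-A)\,\partial_\theta{\hat y}_{query}]$, which is precisely the product written at the start of Section~\ref{sec:derivative_calculations}. Here I would keep ${\hat y}_{query}$ and its $\theta$-derivative in the summation form of Lemma~\ref{lem:predictor_I_tendor}, and write $A=\sum_m w^{(m)}x_q^{(m)}$.

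Next I would exchange the finitely many summations with the expectation, leaving a sum of expectations of monomials in the components of $x_1,\dots,x_N,x_q,w$. Using mutual independence of $x_1,\dots,x_N,x_q,w$, the second moments $\ex[x_n^{(i)}x_n^{(j)}]=\Lam_{ij}=\ex[x_q^{(i)}x_q^{(j)}]$ and $\ex[w^{(i)}w^{(j)}]=\delta_{ij}$, and vanishing of odd moments, each surviving expectation is evaluated by Isserlis'/Wick's theorem; since ${\hat y}_{query}$ is cubic the monomials have degree up to six, so fourth and sixth Gaussian moments enter. The double sum $\sum_{n,n'=1}^N$ arising from the product of two sample sums would be split into the coincident part $n=n'$ ($N$ terms, each a fourth moment of one $x_n$) and the distinct part $n\ne n'$ ($N(N-1)$ terms, products of second moments); combining these with the contribution of the pure-$x_q$ term is what produces the coefficients $\tfrac1N$, $\tfrac{N+1}{N}$, $\tfrac{N+2}{N}$, $\tfrac{N+3}{N}$ and the $\tfrac1{N^2}$ terms, while the fourth and sixth moments of $x_q$ and the fourth moments of $w$ generate the trace factors $\tr(\Lam)$, $\tr(\Lam^2)$, $\tr^2(\Lam)$.

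Finally I would recognize the remaining index contractions as matrix objects: a contraction chain through $\Lam$ and $U$ that closes on itself becomes a trace such as $\tr(U\Lam U^\top\Lam)$ or $\tr(U\Lam U^\top\Lam^2)$; a chain with free endpoints carrying $z,Z$ becomes a quadratic form such as $z^\top\Lam U\Lam Z$ or a vector such as $(z^\top\Lam U\Lam U^\top\Lam)_\ell$; and the linear term coming from $A$ contributes $-\tr(\Lam U\Lam)$, $-(Z^\top\Lam^2)_\ell$, $-(z^\top\Lam^2)_\ell$ or $-v(\Lam^2)_{\ell p}$ according to the coordinate. Collecting terms gives the formulae in Lemmas~\ref{lem:dldv}--\ref{lem:dldU}; the case $\theta=v$ is to be carried out in full in Section~\ref{sec:Dv}, and the other three proceed identically.

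The difficulty is combinatorial rather than conceptual: a cubic-times-cubic product expanded via sixth-order Gaussian moments generates a large number of monomials, and the natural places to slip are the split into $n=n'$ versus $n\ne n'$ and the enumeration of pairings among the $x_q$-factors. I would keep this under control by sorting monomials according to their ``signature'' — which of $v$, $z$, $Z$, $U$ each carries and how many $x_q$-factors it has — so that terms destined for the same matrix expression are collected together, and I would validate the final formulae against the two degenerate cases already available: setting $z=Z=0$ must reproduce the gradient of the reduced loss of~\cite{zhang2023trained} (whose cubic-in-$(v,U)$ and linear parts are flagged in the Remark after Lemma~\ref{lem:dldv}), and the scalar case $d=N=1$ must agree with the one-line identity noted in the Remark after Lemma~\ref{lem:predictor_I_tendor}.
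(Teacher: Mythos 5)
Your proposal matches the paper's argument: the theorem is indeed just the concatenation of Lemmas~\ref{lem:dldv}--\ref{lem:dldU} into the gradient-flow system, and the lemmas themselves are proved in the paper exactly by the template you describe — differentiating under the expectation in the summation form of Lemma~\ref{lem:predictor_I_tendor}, killing odd-$w$ terms by symmetry, reducing even $w$-moments via $\ex[w_iw_j]=\delta_{ij}$ and Isserlis' theorem, splitting $\sum_{n,n'}$ into the $n=n'$ and $n\neq n'$ parts to produce the $\tfrac{1}{N},\tfrac{N+1}{N},\dots$ coefficients, and reassembling the index contractions into traces and quadratic forms. This is essentially the same proof as in Sec.~\ref{sec:Dv} and the supplement.
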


\section{Analysis of the Simplified System {$Z=z=0$}}
\label{sec:analysis_uU}

In this section, we analyze the special case of the gradient flow obtained in Theorem~\ref{thm:differentialEquations}, where the variables $z$ and $Z$ are assumed to be zero vectors, thus termed the simplified system. This system corresponds to the gradient flow of the training dynamics considered in~\cite{zhang2023trained}, where a set of initial states and parameters is identified which are sufficient for exponential convergence to critical points. Based on the analysis provided in this section, we discover that
the invariant manifold takes different shapes depending on the initial values, and the behavior of the gradient flow, which includes the critical points and their stability patterns, is quite different. These differences, which we characterize qualitatively and quantitatively, have not been explicitly demonstrated in previous research. 

The simplified system is defined as the following system of $d^2+1$ ordinary differential equations with initial values $v_0$ and $U_0$:
\begin{align}
    \dot{U}&=v\Lam(-v\Gamma U+\Id)\Lam \label{eqn: dot U}\\
    \dot{v}&=\tr[\Lam(-v\Gamma U+\Id)\Lam U^\top], \label{eqn: dot u}
\end{align}
where $\Gamma=(1+\frac{1}{N})\Lam + \frac{1}{N}\tr[\Lam]\Id$. Here, $\tr$ denotes the trace operator and $\Id$ is the identity matrix. Note that $\Gamma$ is also positive definite, symmetric, and commutes with $\Lam$, that is, $\Lam\Gamma=\Gamma\Lam$.

\subsection{Invariant Manifold}
Each trajectory satisfies the following equation.
\begin{lem}
For any $t\ge 0$, 
\begin{align}
v^2(t)&=\tr[UU^\top](t)+[v_0^2-\tr[U_0U_0^\top]].\label{eqn: codim  1 foliation}
\end{align}
\end{lem}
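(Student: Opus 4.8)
The plan is to show that the scalar quantity $v^2 - \tr[UU^\top]$ is a first integral of the flow \eqref{eqn: dot U}--\eqref{eqn: dot u}: its time derivative vanishes identically along every solution, and then \eqref{eqn: codim  1 foliation} follows simply by integrating from $0$ to $t$.

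Concretely, I would first differentiate the two pieces separately. Since $v$ is a scalar, $\frac{d}{dt}v^2 = 2v\dot v = 2v\,\tr[\Lam(-v\Gamma U+\Id)\Lam U^\top]$ directly from \eqref{eqn: dot u}. For the matrix term, $\frac{d}{dt}\tr[UU^\top] = \tr[\dot U U^\top] + \tr[U\dot U^\top]$, and because a scalar equals its own transpose and the trace is transpose-invariant we have $\tr[U\dot U^\top] = \tr[\dot U U^\top]$, so $\frac{d}{dt}\tr[UU^\top] = 2\tr[\dot U U^\top]$. Substituting \eqref{eqn: dot U} and pulling the scalar factor $v$ out of the trace gives $2\tr[\dot U U^\top] = 2v\,\tr[\Lam(-v\Gamma U+\Id)\Lam U^\top]$, which is exactly $\frac{d}{dt}v^2$. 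Hence $\frac{d}{dt}\bigl(v^2 - \tr[UU^\top]\bigr) = 0$, so $v^2(t)-\tr[UU^\top](t)$ is constant in $t$, equal to $v_0^2 - \tr[U_0U_0^\top]$, and rearranging yields \eqref{eqn: codim  1 foliation}.

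I do not expect any serious obstacle here; the only care required is the bookkeeping of transposes and the cyclic/transpose invariance of the trace. The structural reason the computation closes is that in the simplified regime the loss depends on $(v,U)$ only through the bilinear product $vU$ appearing in the predictor \eqref{eqn:predictor_simplified}, so that $v\,\partial_v L = \tr[(\partial_U L)^\top U]$ by the chain rule; this is the familiar \emph{balancedness} conservation law for two-factor (linear-network) parametrizations. One could alternatively present the proof in that form to make the origin of the invariant transparent, but the direct differentiation above is shortest and self-contained given Lemmas~\ref{lem:dldv} and~\ref{lem:dldU} specialized to $z=Z=0$.
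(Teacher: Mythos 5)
Your proposal is correct and follows essentially the same route as the paper: both establish the key identity $v\dot v=\tr[\dot U U^\top]$ by substituting \eqref{eqn: dot U}--\eqref{eqn: dot u} and using transpose-invariance of the trace, whence $\frac{d}{dt}\bigl(v^2-\tr[UU^\top]\bigr)=0$. Your version is in fact a slightly cleaner presentation of the same computation (the paper routes through the transposed equations explicitly), and the remark about balancedness is a nice, accurate gloss but not a different proof.
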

\begin{proof}
Taking the transposition of~\eqref{eqn: dot U} and using basic properties of the trace operator
and the symmetry of $\Lam$ and $\Gamma$, 
we get:
\begin{align}    (\dot{U})^\top=\dot{(U^\top)}&=v\Lam(-v U^\top\Gamma+\Id)\Lam, \label{eqn: dot UT}\\
    \dot{v}&=\tr[U \Lam (-v U^\top\Gamma+\Id)\Lam]. \label{eqn: dot uT}
\end{align}
Multiplying~\eqref{eqn: dot U} on the right by $U^\top$, \eqref{eqn: dot u} by $v$ and~\eqref{eqn: dot UT} on the left by $U$, \eqref{eqn: dot uT} by $v$, we have:
\begin{align*}
    \dot{U}U^\top&=v\Lam(-v\Gamma U+\Id)\Lam U^\top, \quad
    v\dot{v}=\tr[v\Lam(-v\Gamma U+\Id)\Lam U^\top]=\tr[\dot{U}U^\top],
    \\
    U\dot{U^T}&=vU\Lam(-v U^\top\Gamma U+\Id)\Lam,
    \quad 
    v\dot{v}=\tr[vU \Lam (-v U^\top\Gamma+\Id)\Lam]=\tr[U\dot{U^\top}]. 
\end{align*}
Hence, 
\begin{align}
\frac{\partial v^2}{\partial t}&=\tr[\dot{U}U^\top+U\dot{U^\top]}=
\tr\left[\frac{\partial UU^\top}{\partial t}\right]
=\frac{\partial\tr[UU^\top]}{\partial t}.\nonumber
\end{align}
The equation~\eqref{eqn: codim  1 foliation} thus follows. 
\end{proof}

\begin{rem}
Equation~\eqref{eqn: codim  1 foliation} produces a codimension one foliation with invariant leaves, on which all trajectories live. 
As we shall see later, the curve (one  dimensional manifold) defined by
$v\Gamma U=\Id$ 
consists of critical points of~\eqref{eqn: dot U} and ~\eqref{eqn: dot u}. Here, the matrix $U$ is symmetric because $\Gamma$ is so.
It is easy to verify that, together with~\eqref{eqn: codim  1 foliation}, $v\Gamma U=\Id$ defines a family of critical points, at most two on each leaf of the foliation, possible attractors of the trajectories on the foliation.
\end{rem}

\subsection{Diagonalization of the Equations and Boundedness}

To further facilitate our analysis, we will consider a diagonalized representation of equations~\eqref{eqn: dot U} and~\eqref{eqn: dot V}, taking advantage of the diagonizability of the covariance matrix $\Lam$ and $\Gamma$. 

Denote $E$ as the orthonormal (unitary) matrix $E^\top E=\Id$ of eigenvectors of $\Lam$, $\Lam E= D_\Lam E$ or $E^\top \Lam E=D_\Lam$, where $D_\Lam$ is the diagonal matrix of eigenvalues of $\Lam$.  Also, due to commutativity $E^\top \Gamma E= D_\Gamma$, where $D_\Gamma=(1+\frac{1}{N})D_\Lam+\frac{1}{N}\tr[D_\Lam]\Id$ is the diagonal matrix of eigenvalues of $\Gamma$. Define $V=E^\top U E$, immediately, we can see that, 
\begin{align}
    \dot{V}&=uD_\Lam(-v D_\Gamma V +\Id)D_\Lam,
    \label{eqn: dot V} \\
    \dot{v}&=\tr[D_\Lam(-v D_\Gamma V+\Id)D_\Lam V^\top]. 
    \label{eqn: dot v}
 \end{align}
\begin{rem}
If $V(0)$ is a diagonal matrix, then from~\eqref{eqn: dot V} we get a diagonal solution $V(t)$ and in this case the system of~\eqref{eqn: dot V} and~\eqref{eqn: dot v} is further reduced to be of $d+1$ dimensional.
\end{rem}
Define $Z:=v D_\Gamma V$, then $Z^\top=v V^\top D_\Gamma$ and $\dot{Z}=\dot{v}D_\Gamma V + v D_\Gamma \dot{V}$. 
Therefore,
\begin{align*}
\sum_{ij}Z_{ij}^2=\tr[ZZ^\top]&=v^2\tr[D_\Gamma VV^\top D_\Gamma]=v^2\sum_{ij}\gamma_iV_{ij}^2\gamma_i,
\\
 v^2\sum_{ij}V_{ij}^2=   v^2\tr[VV^\top]&=\tr[D_\Gamma^{-1}ZZ^\top D_\Gamma^{-1}]=
 \sum_{ij}\gamma_i^{-1}Z_{ij}^2\gamma_j^{-1}.
\end{align*}
\begin{pro}
The variable $v^2$ is bounded iff every $V_{ij}$ is bounded iff every $Z_{ij}$ is bounded.
\end{pro}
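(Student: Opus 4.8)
The plan is to prove the chain of equivalences ``$v^2$ bounded $\iff$ every $V_{ij}$ bounded $\iff$ every $Z_{ij}$ bounded'' by exploiting the two algebraic identities established just above the proposition, namely
\begin{align*}
\tr[ZZ^\top] &= v^2\sum_{ij}\gamma_i V_{ij}^2\gamma_j,\\
v^2\tr[VV^\top] &= \sum_{ij}\gamma_i^{-1}Z_{ij}^2\gamma_j^{-1},
\end{align*}
together with the conservation law~\eqref{eqn: codim  1 foliation}, which in the diagonalized variables reads $v^2 = \sum_{ij}V_{ij}^2 + c$ with $c = v_0^2 - \tr[U_0 U_0^\top]$ a constant fixed by the initial data. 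The key structural fact to use is that all eigenvalues $\gamma_i$ of $\Gamma$ lie in a fixed interval $[\gamma_{\min},\gamma_{\max}]\subset(0,\infty)$, since $\Gamma$ is positive definite; hence the weights $\gamma_i\gamma_j$ and $\gamma_i^{-1}\gamma_j^{-1}$ appearing above are bounded above and below by positive constants depending only on $\Lam$ and $N$.

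First I would prove ``$v^2$ bounded $\iff$ every $V_{ij}$ bounded''. The direction ($\Leftarrow$) is immediate from the conservation law: if every $V_{ij}$ is bounded then $\sum_{ij}V_{ij}^2$ is bounded, hence $v^2 = \sum_{ij}V_{ij}^2 + c$ is bounded. For ($\Rightarrow$), suppose $v^2$ is bounded, say $v^2 \le M$ for all $t\ge 0$; then from the conservation law $\sum_{ij}V_{ij}^2 = v^2 - c \le M - c$, so each individual $V_{ij}^2$ is bounded by $M-c$, giving boundedness of every $V_{ij}$. (One should note $c$ may be negative, but $\sum_{ij}V_{ij}^2\ge 0$ forces $v^2\ge c$ always, so $M-c\ge 0$; no sign issue arises.)

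Next I would prove ``every $V_{ij}$ bounded $\iff$ every $Z_{ij}$ bounded'', and here the two trace identities do the work. If every $V_{ij}$ is bounded, then (using also that $v^2$ is then bounded by the first equivalence) the right-hand side quantity $v^2\sum_{ij}\gamma_i V_{ij}^2\gamma_j$ is bounded, since it is a finite sum of bounded terms times $\gamma_i\gamma_j\le \gamma_{\max}^2$; by the first identity $\tr[ZZ^\top] = \sum_{ij}Z_{ij}^2$ is then bounded, so every $Z_{ij}$ is bounded. Conversely, if every $Z_{ij}$ is bounded, then $\sum_{ij}\gamma_i^{-1}Z_{ij}^2\gamma_j^{-1}$ is bounded (again a finite sum, with $\gamma_i^{-1}\gamma_j^{-1}\le\gamma_{\min}^{-2}$), so by the second identity $v^2\tr[VV^\top] = v^2\sum_{ij}V_{ij}^2$ is bounded; combined with the conservation law $v^2 = \sum_{ij}V_{ij}^2 + c$, writing $S := \sum_{ij}V_{ij}^2$ we get $(S+c)S$ bounded, i.e. $S^2 + cS$ bounded, which forces $S$ bounded (a real sequence with $S\ge 0$ and $S^2+cS$ bounded must be bounded), hence every $V_{ij}$ bounded and also $v^2$ bounded. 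This closes the cycle.

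The only genuine subtlety — and the step I would flag as the main thing to get right rather than a deep obstacle — is the passage ``$v^2\sum_{ij}V_{ij}^2$ bounded $\Rightarrow$ $\sum_{ij}V_{ij}^2$ bounded'' (and symmetrically on the $Z$ side): it relies essentially on the conservation law coupling $v^2$ to $\sum_{ij}V_{ij}^2$, because without it the product of two quantities being bounded says nothing about either factor. The clean way to present this is to substitute $v^2 = S + c$ and observe that $S(S+c) = S^2 + cS \le K$ for some constant $K$ implies $S \le \tfrac12(-c + \sqrt{c^2 + 4K})$ by the quadratic formula, so $S$ is bounded; everything else is elementary manipulation of finite sums and the uniform two-sided bounds on the eigenvalues of $\Gamma$.
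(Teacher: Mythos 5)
Your proof is correct and follows essentially the same route as the paper's: the conservation law $v^2=\sum_{ij}V_{ij}^2+\const$ gives the first equivalence, and the relation $Z_{ij}=v\gamma_i V_{ij}$ together with the fact that the $\gamma_i$ are bounded away from $0$ and $\infty$ gives the second. The only difference is that you make explicit the quadratic-formula step showing that boundedness of $S(S+c)$ with $S=\sum_{ij}V_{ij}^2\ge 0$ forces $S$ to be bounded, a detail the paper's terse proof leaves implicit; this is a worthwhile clarification rather than a deviation.
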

\begin{proof}
It follows from $v^2=\tr[VV^\top]+{\rm  
const}=\sum_{ij}V_{ij}^2+{\rm  const}$ and from the fact that $\Lambda$ is nonnegative definite, so all $\lambda_i\ge 0$ and $\gamma_i=(1+\frac{1}{N})\lambda_i+\frac{1}{N}\sum_i\lambda_i\ge \frac{1}{N}\sum_i\lambda_i$ are separated from 0 unless all $\lambda_i=0$ and from $\infty$, since all $\lambda_i$ are finite.
\end{proof}

Write that the matrices $D_\Lam={\diag\,}(\lambda_i), D_\Gamma={\diag\,}(\gamma_i)$, $\lambda_i, \gamma_i>0$  are diagonal and positive definite, that is, the diagonals consist of positive entries. Note that  $Z_{ij}=v\gamma_i V_{ij}$, and 
\begin{align*}
    \dot{V}_{ij}&=v\lambda_i(-v \gamma_i V_{ij} +\Id_{ij})\lambda_j
    =\left\{
    \begin{array}{ll}
    -v\lambda_i\lambda_j Z_{ij}&j\not=i\\
    v\lambda_i^2(1-Z_{ii})& j=i    
    \end{array}
    \right. 
    \\
    \dot{v}&=\sum_l\sum_k \lambda_l(-v \gamma_l V_{lk}+\Id_{lk})\lambda_k V_{lk}\\
    &=\sum_{l}\sum_{k\not=l}\lambda_l(-v \gamma_l V_{lk})\lambda_k V_{lk}
    +\sum_{l}\lambda_l(1-v\gamma_l V_{ll})\lambda_l V_{ll}\\
    &=\sum_{l}\sum_{k\not=l}\lambda_l\lambda_k(-Z_{lk})\frac{1}{v\gamma_l} Z_{lk}
    +\sum_{l}\lambda_l^2(1-Z_{ll})\frac{1}{v\gamma_l}Z_{ll}
 \end{align*}
 Therefore, the gradient flow can be presented as the follow,
 \begin{align}
 \label{eqn:gf_coordinate-wise}
 \left\{\begin{array}{ll}
    \dot{(V^2_{ij})}=-2\lambda_i\lambda_j\gamma_i^{-1}\cdot Z_{ij}^2< 0 \text{ unless }Z_{ij}=0    & j\not=i 
    \\
    \dot{(V^2_{ii})}=2\lambda_i^2\gamma_i^{-1}(1-Z_{ii})Z_{ii}=
    2\lambda_i^2\gamma_i^{-1}
    \left(\frac{1}{4}-(Z_{ii}-\frac{1}{2})^2\right) &i=j
\\
\dot{(v^2)}=
    -2\sum_{l}\sum_{k\not=l}\lambda_l\lambda_k\gamma_l^{-1} Z_{lk}^2
    +2\sum_{l}\lambda_l^2\gamma_l^{-1}\left(\frac{1}{4}-(Z_{ll}-\frac{1}{2})^2)\right). &
\end{array}\right.
\end{align}
This allows us to obtain characterization of its asymptotic behavior.
\begin{pro}
For any $i\not=j$, $Z_{ij}(t)\to 0$, as $t\to\infty$, thus, $v(t)V_{ij}(t)\to 0$. In particular, if $v\not\to 0$, $V_{ij}(t)\to 0$ For all $i\not=j$. 
\end{pro}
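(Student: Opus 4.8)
The plan is to read everything off the coordinate‑wise system~\eqref{eqn:gf_coordinate-wise} and then close with a Barbalat‑type lemma, but this first requires knowing that the trajectory does not escape to infinity, so I would begin there.

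\emph{Boundedness.} I claim $v^2(t)$ stays bounded. In the last line of~\eqref{eqn:gf_coordinate-wise}, discard the nonpositive off‑diagonal sum and estimate the diagonal one: with $\mu:=\min_i\lambda_i^2\gamma_i^{-1}>0$ and $\gamma_{\min}:=\min_i\gamma_i>0$, and using $\sum_i(Z_{ii}-\tfrac12)^2\ge\tfrac12\sum_i Z_{ii}^2-\tfrac d4$ (Cauchy--Schwarz together with $\sqrt d\sqrt S\le\tfrac12(d+S)$), one gets $\dot{(v^2)}\le C_1-\mu\sum_i Z_{ii}^2$ for a constant $C_1$ depending only on $d$ and the eigenvalues. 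Now the off‑diagonal entries $V_{ij}^2$ ($i\ne j$) are non‑increasing by~\eqref{eqn:gf_coordinate-wise}, hence $\sum_{i\ne j}V_{ij}^2(t)\le M_0:=\sum_{i\ne j}V_{ij}^2(0)$, and the invariant~\eqref{eqn: codim  1 foliation} written in the diagonalizing coordinates gives $\sum_i V_{ii}^2=\tr[VV^\top]-\sum_{i\ne j}V_{ij}^2=v^2-c-\sum_{i\ne j}V_{ij}^2\ge v^2-c-M_0$, where $c=v_0^2-\tr[U_0U_0^\top]$ is constant. Therefore $\sum_i Z_{ii}^2=v^2\sum_i\gamma_i^2V_{ii}^2\ge\gamma_{\min}^2\,v^2(v^2-c-M_0)$, so $\dot{(v^2)}\le C_1-\mu\gamma_{\min}^2\,v^2(v^2-c-M_0)$; the right‑hand side is a downward quartic in $v$, hence strictly negative once $v^2$ exceeds a threshold, which forces $v^2$ to be bounded. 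By the boundedness equivalence proved just above, all $V_{ij}$ and all $Z_{ij}$ are then bounded, and since $\dot v$ and $\dot V$ are polynomials in $v,V$ by~\eqref{eqn: dot V}--\eqref{eqn: dot v}, the derivatives $\dot v$, $\dot V_{ij}$, and hence $\dot Z_{ij}=\dot v\,\gamma_i V_{ij}+v\,\gamma_i\dot V_{ij}$, are bounded as well.

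\emph{Conclusion.} For $i\ne j$, the first line of~\eqref{eqn:gf_coordinate-wise} says $\tfrac{d}{dt}V_{ij}^2=-2\lambda_i\lambda_j\gamma_i^{-1}Z_{ij}^2\le 0$; since $V_{ij}^2\ge 0$ it converges as $t\to\infty$, and integrating yields $\int_0^\infty Z_{ij}^2\,dt=\tfrac{\gamma_i}{2\lambda_i\lambda_j}\bigl(V_{ij}^2(0)-\lim_{t\to\infty}V_{ij}^2(t)\bigr)<\infty$. By the boundedness step $\tfrac{d}{dt}(Z_{ij}^2)=2Z_{ij}\dot Z_{ij}$ is bounded, so $Z_{ij}^2$ is uniformly continuous; Barbalat's lemma then gives $Z_{ij}^2(t)\to 0$, i.e.\ $Z_{ij}(t)=v(t)\gamma_iV_{ij}(t)\to 0$, and since $\gamma_i>0$ also $v(t)V_{ij}(t)\to 0$. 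Finally, if $v\not\to 0$ pick $\eps>0$ and $t_n\to\infty$ with $|v(t_n)|\ge\eps$; then $|V_{ij}(t_n)|\le|v(t_n)V_{ij}(t_n)|/\eps\to 0$, and since $V_{ij}^2$ is monotone and convergent its limit must be $0$, so $V_{ij}(t)\to 0$ for every $i\ne j$.

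The main obstacle is the boundedness step: the equivalence proved above is only conditional, so one genuinely has to produce a coercivity‑type estimate, and the delicate point is that the growth $\sum_i Z_{ii}^2\gtrsim v^4$ has to be extracted from $v^2$ alone — this is exactly where the monotonicity of the off‑diagonal $V_{ij}^2$ is used, to confine $\sum_{i\ne j}V_{ij}^2$ and thereby force the bulk of $\tr[VV^\top]$ onto the diagonal. Everything afterwards is routine. (An alternative to the Barbalat argument, if one prefers, is to invoke the \L{}ojasiewicz inequality: $L$ is a polynomial, so the bounded trajectory converges to a single critical point, and by the characterization $v\Gamma U=\Id$ or $v=0$ of critical points one has $Z_{ij}^*=0$ for $i\ne j$, giving the same conclusion; but this presupposes the critical‑point analysis.)
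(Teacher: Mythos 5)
Your proof is correct, and it rests on the same elementary observation as the paper's: from the coordinate-wise system~\eqref{eqn:gf_coordinate-wise}, each $V_{ij}^2$ ($i\neq j$) is nonnegative and non-increasing, hence convergent. Where you diverge is in how you pass from ``$V_{ij}^2$ converges'' to ``$Z_{ij}^2\to 0$.'' The paper simply asserts that convergence of $V_{ij}^2$ forces $\dot{(V_{ij}^2)}\to 0$; as stated this inference is not automatic (a monotone convergent function need not have derivative tending to zero without some equicontinuity of that derivative), so your decision to first prove the trajectory is bounded, deduce that $\frac{d}{dt}(Z_{ij}^2)$ is bounded, and then close with integrability of $Z_{ij}^2$ plus Barbalat's lemma is the honest way to make the step airtight. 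Your coercivity estimate $\dot{(v^2)}\le C_1-\mu\gamma_{\min}^2\,v^2(v^2-c-M_0)$ is a self-contained route to boundedness (checking the details: $(Z_{ll}-\tfrac12)^2\ge\tfrac12Z_{ll}^2-\tfrac14$ is exactly $(Z_{ll}-1)^2\ge0$, and confining $\sum_{i\ne j}V_{ij}^2$ by its monotonicity so that the invariant $v^2=\sum_{ij}V_{ij}^2+c$ pushes the mass onto the diagonal is the right trick); the paper instead proves boundedness of $v^2$ and $V_{ii}^2$ in a separate proposition placed \emph{after} this one, so it is not available to its own proof here. The net effect is that your argument is longer but strictly more rigorous, at the modest cost of using $\lambda_i>0$ for all $i$ (which the paper also assumes in this section) to get $\mu>0$. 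The final step, upgrading $vV_{ij}\to 0$ to $V_{ij}\to 0$ when $v\not\to 0$ by combining a subsequence with the monotonicity of $V_{ij}^2$, is also fine and slightly more explicit than the paper's one-line claim.
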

\begin{proof}
As $\lambda, \gamma>0$,  $V_{ij}^2(t)$ is positive and non-increasing, hence it has a finite limit. 
Therefore, $\dot{V^2_{ij}}\ra 0$, 
which means $Z_{ij}^2\to 0$ and $v(t)V(t)_{ij}\to 0$.
\end{proof}
\begin{pro} On the trajectories, $V^2_{ii}(t)$, $i=1,2,\ldots, d$ and $v^2(t)$ are bounded. The bounds depend on initial conditions, but at the limit we have  $V_{ii}^2\le |\kappa|+\max(|\kappa|^{-1}, \gamma_i^2+\gamma_i^{-2})$, 
$v^2\le (d+1)|\kappa|+d|\kappa|^{-1}+\sum_i(\gamma_i^2+\gamma_i^{-2})$.
\end{pro}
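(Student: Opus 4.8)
The plan is to extract everything from the coordinate-wise equations~\eqref{eqn:gf_coordinate-wise} together with the conserved quantity of the foliation. Write $\kappa:=v_0^2-\tr[U_0U_0^\top]$ for that invariant; since the trace is unchanged under conjugation by the orthogonal matrix $E$, one has $v^2(t)=\sum_{ij}V_{ij}^2(t)+\kappa$ for all $t$, and in particular the one-sided bound $v^2\ge V_{ii}^2-|\kappa|$ for each $i$. The off-diagonal part is then immediate: by~\eqref{eqn:gf_coordinate-wise} we have $\dot{(V_{ij}^2)}=-2\lambda_i\lambda_j\gamma_i^{-1}Z_{ij}^2\le 0$ for $i\ne j$, so every $V_{ij}^2$ is non-increasing, hence bounded by $V_{ij}^2(0)$, and converges to some limit $L_{ij}\ge 0$.

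For the diagonal entries I would build a forward-invariant region. From~\eqref{eqn:gf_coordinate-wise}, $\dot{(V_{ii}^2)}=2\lambda_i^2\gamma_i^{-1}Z_{ii}(1-Z_{ii})$ is strictly positive only when $0<Z_{ii}<1$, hence only when $Z_{ii}^2<1$; conversely $Z_{ii}^2\ge 1$ forces $\dot{(V_{ii}^2)}\le 0$. Now $Z_{ii}^2=v^2\gamma_i^2V_{ii}^2\ge(V_{ii}^2-|\kappa|)\gamma_i^2V_{ii}^2$, and the right-hand side, as a function of $V_{ii}^2$, is nondecreasing for $V_{ii}^2\ge|\kappa|$ and equals $1$ at $w_i^\ast:=\tfrac{|\kappa|}{2}+\sqrt{\tfrac{\kappa^2}{4}+\gamma_i^{-2}}$, which satisfies $|\kappa|<w_i^\ast\le|\kappa|+\gamma_i^{-1}$. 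Therefore $V_{ii}^2\ge w_i^\ast$ implies $Z_{ii}^2\ge 1$ and hence $\dot{(V_{ii}^2)}\le 0$, so $\{V_{ii}^2\le w_i^\ast\}$ is forward invariant: $V_{ii}^2(t)\le\max\{V_{ii}^2(0),w_i^\ast\}$ for every $t$, and $\limsup_{t\to\infty}V_{ii}^2(t)\le w_i^\ast$. Using $\gamma_i^{-1}\le\gamma_i^2+\gamma_i^{-2}\le\max(|\kappa|^{-1},\gamma_i^2+\gamma_i^{-2})$ turns this into the stated bound (in fact the sharper $|\kappa|+\gamma_i^{-1}$). Finally, the identity $v^2=\sum_{ij}V_{ij}^2+\kappa$ together with the entrywise bounds shows $v^2(t)$ is bounded; since all bounds are finite on the maximal interval of existence, there is no finite-time blow-up and the flow is global.

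For the asymptotic bound on $v^2$ I would split into cases. If $v(t)\to 0$ the claim is trivial. Otherwise, by the preceding proposition $Z_{ij}(t)\to 0$ for all $i\ne j$; since $Z_{ij}^2=v^2\gamma_i^2V_{ij}^2$ and $V_{ij}^2\to L_{ij}$, the assumption $v\not\to 0$ forces $L_{ij}=0$, so $\sum_{i\ne j}V_{ij}^2(t)\to 0$. Hence $\limsup_{t\to\infty}v^2(t)\le\sum_i\limsup_t V_{ii}^2(t)+\kappa\le\sum_i w_i^\ast+|\kappa|$, and bounding each $w_i^\ast\le|\kappa|+\max(|\kappa|^{-1},\gamma_i^2+\gamma_i^{-2})$ and then using $\max(a,b)\le a+b$ yields $\limsup v^2\le(d+1)|\kappa|+d|\kappa|^{-1}+\sum_i(\gamma_i^2+\gamma_i^{-2})$.

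The real obstacle is the coupling: $Z_{ii}$ carries the factor $v$, which is assembled from all entries $V_{jk}$, so the diagonal equation is not self-contained and a naive barrier in $V_{ii}^2$ alone does not close. The device that breaks the coupling is the foliation identity, which supplies the one-sided estimate $v^2\ge V_{ii}^2-|\kappa|$ needed to push $Z_{ii}^2$ above $1$ when $V_{ii}^2$ is large; a secondary point of care is that only $|\kappa|$ (not $\kappa$) enters these estimates, and that the degenerate case $v(t)\to 0$ must be separated out when passing to the limit.
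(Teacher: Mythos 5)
Your argument is correct, and the underlying mechanism is the one the paper uses: the sign of $\dot{(V_{ii}^2)}$ is controlled by whether $Z_{ii}^2$ exceeds $1$, and the foliation identity ties $v^2$ to $V_{ii}^2$. The packaging differs. The paper runs a three-way case split (either $V_{ii}^2\le|\kappa|+|\kappa|^{-1}$, or $v^2\le\gamma_i^2+\gamma_i^{-2}$ so that $V_{ii}^2\le v^2+|\kappa|$, or both are large and then $Z_{ii}^2>2$ forces decrease), whereas you eliminate $v$ altogether via $v^2\ge V_{ii}^2-|\kappa|$ and obtain a single self-contained barrier $w_i^*=\tfrac12\bigl(|\kappa|+\sqrt{\kappa^2+4\gamma_i^{-2}}\bigr)$ whose sublevel set is forward invariant. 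This buys you two things: a sharper asymptotic constant $|\kappa|+\gamma_i^{-1}$ (which, unlike the stated $|\kappa|+\max(|\kappa|^{-1},\gamma_i^2+\gamma_i^{-2})$, stays finite as $\kappa\to 0$), and an explicit derivation of the $v^2$ bound — via $Z_{ij}\to 0$ and the dichotomy on $v\to 0$ — which the paper's proof asserts in the statement but does not actually carry out. One small point of rigor worth keeping from your write-up: to pass from "$\dot{(V_{ii}^2)}\le 0$ above the barrier" to $\limsup_t V_{ii}^2\le w_i^*$ one needs the derivative bounded away from zero on $\{V_{ii}^2\ge w_i^*+\epsilon\}$, which your lower bound $Z_{ii}^2\ge g(V_{ii}^2)$ with $g$ strictly increasing past $w_i^*$ does provide; the paper's phrase "decreasing until it reaches either boundary" glosses over the same step.
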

 \begin{proof}
If $V^2_{ii}\le |\kappa|+|\kappa|^{-1}$, we are done. 
If $v^2\le  \gamma_i^2+\gamma_i^{-2}$, as $v^2=\sum_j V_{ii}^2+\kappa$, we have
$V_{ii}^2\le v^2+|\kappa|\le \gamma_i^2+\gamma_i^{-2}+|\kappa|$.
Otherwise, we have  $V_{ii}^2>|\kappa|+|\kappa|^{-1}$ and $v^2>\gamma_i^2+\gamma_i^{-2}$,  then 
$Z_{ii}^2=v^2\gamma_i^2V_{ii}^2>(\gamma_i^4+1)( |\kappa|+|\kappa|^{-1})>2$. Therefore, $\frac{\partial V^2_{ii}}{\partial t}< 0$, and $V^2_{ii}$ is decreasing until it reaches either boundary conditions. 
\end{proof}

\subsection{Critical Points}

\subsubsection{Identify the Critical Points}

Denote the right-hand side of the ordinary differential equations by $F(V,v)=(F_{ij}(V,v), f(V,v))$, thus $\dot{V}=F$, $\dot{v}=f$, and coordinate-wise:
\begin{equation*}
\begin{array}{rlll}
    \dot{V}_{ii}&=F_{ii}(V_{ii},v)&=v\lambda_i^2(1-v\gamma_i V_{ii})\\
    \dot{V}_{ij}&=F_{ij}(V_{ij},v)&=-v^2\lambda_i\lambda_j\gamma_iV_{ij}&i\not= j\\
    \dot{v}&=f(V,v)&=-v\sum_{i\not=j}\lambda_i\lambda_j\gamma_i V_{ij}^2 +\sum_i\lambda_i^2 V_{ii}-v\sum_i\lambda_i^2 \gamma_i V_{ii}^2.
\end{array}
\end{equation*}
Denote $(W,w)$ as the critical points, satisfying $(F(W,w),f(W,w))=(0,0)$.

\vskip 0.5cm
\noindent
\textbf{Case $v\not=0$, the points $(A,a)$}\\
For any $i\not=j$, $F_{ij}=0$ holds only when $A_{ij}=0$. $F_{ii}=0$ when $a\gamma_i A_{ii}=1$ which implies $f=0$ as well. This, together with the invariance condition,  defines a set of (two) points on any invariant leaf, symmetric by the sign change. We note that all $A_{ii}$ have the same sign as $a$. Moreover, with $\mu:=\sum_i\gamma_i^{-2}$, we have, 
\begin{align*}
\left\{\begin{array}{ll}    a=\pm\sqrt{\frac{1}{2}\left(\kappa+\sqrt{\kappa^2+4\mu}\right)}, &
\\
A_{ii}=\frac{1}{\gamma_ia}, & i=1,2,\ldots d
\\
A_{ij}=0, & i\not=j.
\end{array} \right.
\end{align*}

\vskip 0.5cm
\noindent
\textbf{Case $v=0$, the points $(B,b)$}\\
In this case $b=w=0$ and $F(B,0)=0$.  We then have $f(B,0)=0$ iff $\sum_{i}\lambda_i^2B_{ii}=0$. We remark that this defines a codimension two linear subspace of the $d^2+1$ dimensional state space or, in the case when we consider only the diagonal subsystem of $V_{ij}=0$ for $i\not=j$ a codimension two subspace of the $d+1$ dimensional state space $(V_{ii},v)$. In fact, by the invariance of the hyperboloid $v^2=\sum V_{ij}^2+\kappa$ ($\kappa$ being a constant), this defines a co-dimension one submanifold respectively of $d^2$ and $d$ dimensional manifolds. 

\begin{pro}
On any invariant leaf, all critical points lie on the same linear subspace.
\end{pro}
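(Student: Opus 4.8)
The plan is to exhibit, for each invariant leaf $v^2=\tr[UU^\top]+\kappa$, a single nonzero linear functional on state space that vanishes at every critical point lying on that leaf; its kernel is then the desired (codimension-one) linear subspace $H_\kappa$. I would start from the two families already identified: type $(A,a)$ with $a\neq0$, where $A_{ij}=0$ for $i\neq j$, $A_{ii}=1/(\gamma_i a)$, and $a^2=\tfrac12(\kappa+\sqrt{\kappa^2+4\mu})$ with $\mu=\sum_i\gamma_i^{-2}$; and type $(B,0)$ with $v=0$, $\sum_i\lambda_i^2B_{ii}=0$ and off-diagonal entries unconstrained.

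The first step is a relation valid at every critical point simultaneously. From $F_{ij}=-v^2\lambda_i\lambda_j\gamma_iV_{ij}=0$ for $i\neq j$, a critical point has either $v=0$ or $V_{ij}=0$ for all $i\neq j$; in both cases the off-diagonal term $v\sum_{i\neq j}\lambda_i\lambda_j\gamma_iV_{ij}^2$ in $f$ vanishes, so $f=0$ reduces to $\sum_i\lambda_i^2V_{ii}=v\sum_i\lambda_i^2\gamma_iV_{ii}^2$. This is still quadratic in $v$, so I would linearize it on each family. Writing $c:=\sum_i\lambda_i^2\gamma_i^{-1}=\tr[D_\Lam^2 D_\Gamma^{-1}]$: on a $(B,0)$ point $v=0$ reduces the relation to $\sum_i\lambda_i^2B_{ii}=0$, hence $a^2\sum_i\lambda_i^2B_{ii}-c\,v=0$ holds trivially (whatever $a^2$ is); on an $(A,a)$ point, substituting $A_{ii}=1/(a\gamma_i)$ (from $F_{ii}=0$) gives $\sum_i\lambda_i^2A_{ii}=c/a$, so $a^2\sum_i\lambda_i^2A_{ii}=c\,a=c\,v$. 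Thus every critical point on the leaf satisfies
\[
a(\kappa)^2\,\tr[D_\Lam^2 V]\;=\;\tr[D_\Lam^2 D_\Gamma^{-1}]\,v,
\qquad a(\kappa)^2=\tfrac12\bigl(\kappa+\sqrt{\kappa^2+4\mu}\bigr),
\]
which in the original coordinates reads $a(\kappa)^2\tr[\Lam^2U]=\tr[\Lam^2\Gamma^{-1}]v$. Since $\Lam$ is positive definite, $\tr[\Lam^2\Gamma^{-1}]>0$ and $a(\kappa)^2>0$, so the functional is genuinely nonzero, and its kernel $H_\kappa$ is a hyperplane through the origin, the same for all critical points on the leaf.

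The place that needs the most care — and the only real obstacle — is the bookkeeping that forces a single hyperplane to work: purely algebraically, $\{\beta\tr[D_\Lam^2V]=c\,v\}$ contains an $(A,a)$ point exactly when $\beta=v^2$ there, so I must know that all $v\neq0$ critical points on a fixed leaf share the same $v^2$. This is exactly where the invariance identity~\eqref{eqn: codim  1 foliation} enters: on a $v\neq0$ critical point (so $V_{ij}=0$ off-diagonal, $V_{ii}=1/(v\gamma_i)$) it gives $v^2=\mu/v^2+\kappa$, a quadratic in $v^2$ with a unique positive root, so $v^2=\tfrac12(\kappa+\sqrt{\kappa^2+4\mu})=:a(\kappa)^2$ for both of the two sign-symmetric points, and this value depends only on the leaf. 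It is worth noting, finally, that $H_\kappa$ constrains none of the off-diagonal coordinates, which is precisely what lets the whole $(B,0)$ critical submanifold (on which those coordinates vary) lie inside $H_\kappa$ together with the isolated pair $\pm(A,a)$.
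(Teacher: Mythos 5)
Your proposal is correct and follows essentially the same route as the paper: both exhibit the hyperplane $\beta\sum_i\lambda_i^2V_{ii}=\alpha v$ (your normalization $\beta=a(\kappa)^2$, $\alpha=\sum_i\lambda_i^2/\gamma_i$ is exactly the paper's $\beta=\frac{\alpha}{2\nu}(\kappa+\sqrt{\kappa^2+4\mu})$), note that the $(B,0)$ points satisfy it trivially since both sides vanish there, and verify it on the $(A,\pm a)$ points using $A_{ii}=1/(a\gamma_i)$. Your extra remark that the invariance identity forces all $v\neq0$ critical points on a leaf to share the same $v^2=a(\kappa)^2$ is a welcome explicit justification of a step the paper leaves implicit in its formula for $a$.
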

\begin{proof}
Define the codimension one linear subspace by the equation
$\alpha v -\beta \sum_i \lambda_i^2 V_{ii}=0$.
Clearly, the critical points $(B,b)$ (the case $v=0$ all lie on this subspace (regardless of the values of $V_{i\not=j}$. We want to determine $\alpha$ and $\beta$ so that the points $(A,a)$ (case $v\not=0$) lie on this hyperplane.
Define $\nu:=\sum_i\frac{\lambda_i^2}{\gamma_i}$. Then 
\begin{align*}
\alpha a -\beta \sum_i\lambda_i^2 A_{ii}&=0,
\end{align*}
leads to 
\begin{align*}
\beta&= \frac{\alpha}{2\nu}(\kappa+\sqrt{\kappa^2+4\mu}),
\end{align*}
which provides the required equation. 
\end{proof}

\subsubsection{Linearization at the Critical Points}

The linearization provides the character of the critical points. 
At the point $(W,w)$ it is given by the variables
$(\Delta, \delta)=(V,v)-(W,w)$ and the equations:
\begin{equation*}
    \left(
    \begin{array}{c}
         \dot{\Delta} \\
          \dot{\delta}
    \end{array}
    \right)=
    \frac{\partial(F,f)}{\partial(V,v)}_{\left|V=W, v=w\right.}\cdot\left(
    \begin{array}{c}
         \Delta\\
         \delta
    \end{array}
    \right).
\end{equation*}
\begin{pro}\label{pro:linearization} At critical points, the linearization takes the following form, 
\begin{itemize}
\item{\bf Case $(W,w)=(A,a)$}, then $v=w=b\not=0$ and
$A_{i\not=j}=0$ and $a\gamma_i A_{ii}=1$.
We have
\begin{align*}
    \dot{\Delta}_{ii}&=-\lambda_i^2a^2\gamma_i\Delta_{ii}-\lambda_i^2\delta\\
    \dot{\Delta}_{ij}&=-a^2\lambda_i\lambda_j \gamma_i\Delta_{ij}&i\not=j\\
    \dot{\delta}&=-\sum_i(\lambda_i^2\Delta_{ii}+\lambda_i^2\gamma_iA_{ii}^2\delta)
\end{align*}
\item
{\bf Case $(W,w)=(B,b)$}, then $v=w=b=0$ and 
 $\sum_i\lambda_i^2B_{ii}=0$.
 We have
\begin{align*}
    \dot{\Delta}_{ii}&=\lambda_i^2\delta\\
    \dot{\Delta}_{ij}&=0 &i\not=j\\
    \dot{\delta}&=\sum_i\lambda_i^2 \Delta_{ii}-(\sum_{ij}\lambda_i\lambda_j\gamma_iB_{ij}^2)\delta
\end{align*}
\end{itemize}
\end{pro}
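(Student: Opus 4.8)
The plan is to obtain the linearizations by computing directly the Jacobian $\frac{\partial(F,f)}{\partial(V,v)}$ of the right-hand side as a function on the whole phase space, and then substituting each of the two critical-point families, letting the defining relations of those points collapse the generic expressions into the stated form. No qualitative input is needed; everything is first-order Taylor expansion.

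First I would record the three blocks of first partials. Since $F_{ii}=v\lambda_i^2-v^2\lambda_i^2\gamma_i V_{ii}$ depends only on $V_{ii}$ and $v$, one gets $\partial F_{ii}/\partial V_{ii}=-v^2\lambda_i^2\gamma_i$ and $\partial F_{ii}/\partial v=\lambda_i^2(1-2v\gamma_i V_{ii})$, all other partials of $F_{ii}$ being zero. Likewise $F_{ij}=-v^2\lambda_i\lambda_j\gamma_i V_{ij}$ (for $i\neq j$) gives $\partial F_{ij}/\partial V_{ij}=-v^2\lambda_i\lambda_j\gamma_i$ and $\partial F_{ij}/\partial v=-2v\lambda_i\lambda_j\gamma_i V_{ij}$, all others zero. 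Differentiating $f=-v\sum_{k\neq l}\lambda_k\lambda_l\gamma_k V_{kl}^2+\sum_k\lambda_k^2 V_{kk}-v\sum_k\lambda_k^2\gamma_k V_{kk}^2$ yields $\partial f/\partial V_{kk}=\lambda_k^2(1-2v\gamma_k V_{kk})$, $\partial f/\partial V_{kl}=-2v\lambda_k\lambda_l\gamma_k V_{kl}$ for $k\neq l$, and $\partial f/\partial v=-\sum_{k\neq l}\lambda_k\lambda_l\gamma_k V_{kl}^2-\sum_k\lambda_k^2\gamma_k V_{kk}^2$.

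Next I would substitute $(A,a)$, where $A_{ij}=0$ for $i\neq j$ and $a\gamma_i A_{ii}=1$. The relation $a\gamma_i A_{ii}=1$ turns $\partial F_{ii}/\partial v$ into $\lambda_i^2(1-2)=-\lambda_i^2$ and $\partial F_{ii}/\partial V_{ii}$ into $-a^2\lambda_i^2\gamma_i$; vanishing of the off-diagonal entries kills $\partial F_{ij}/\partial v$ and $\partial f/\partial V_{kl}$ ($k\neq l$), leaving $\partial F_{ij}/\partial V_{ij}=-a^2\lambda_i\lambda_j\gamma_i$, $\partial f/\partial V_{kk}=-\lambda_k^2$, and $\partial f/\partial v=-\sum_k\lambda_k^2\gamma_k A_{kk}^2$. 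Assembling these blocks reproduces the system listed for $(A,a)$; in particular the $\Delta_{ij}$, $i\neq j$, equations decouple entirely. For $(B,b)$ one uses $b=0$ together with $\sum_i\lambda_i^2 B_{ii}=0$: setting $v=0$ annihilates every partial carrying a factor of $v$, so $\partial F_{ii}/\partial V_{ii}=0$, $\partial F_{ij}/\partial V_{ij}=0$, $\partial F_{ij}/\partial v=0$, $\partial f/\partial V_{kl}=0$ ($k\neq l$), while $\partial F_{ii}/\partial v=\lambda_i^2$, $\partial f/\partial V_{kk}=\lambda_k^2$, and $\partial f/\partial v=-\sum_{ij}\lambda_i\lambda_j\gamma_i B_{ij}^2$ after recombining the diagonal term $\sum_k\lambda_k^2\gamma_k B_{kk}^2$ with the off-diagonal sum. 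This is exactly the stated system; the constraint $\sum_i\lambda_i^2 B_{ii}=0$ enters only through $(B,b)$ being a critical point, not through the Jacobian.

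The calculation is elementary, so the only real ``obstacle'' is bookkeeping: tracking which index block each partial belongs to and confirming that no cross terms between distinct off-diagonal entries, or between an off-diagonal $\Delta_{ij}$ and the pair $(\Delta_{ii},\delta)$, survive — both immediate from the product structure, since each $F_{ij}$ involves only its own entry and $v$. I would also note that in the statement of the first case ``$v=w=b\neq 0$'' should read ``$v=w=a\neq 0$''; this is a harmless typo that does not affect the argument.
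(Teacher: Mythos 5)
Your proposal is correct and follows essentially the same route as the paper: compute the coordinate-wise partial derivatives of $(F,f)$, assemble the block Jacobian, and evaluate at the two families of critical points using $a\gamma_iA_{ii}=1$, $A_{i\neq j}=0$ in the first case and $v=0$ in the second. If anything you are slightly more explicit than the paper about carrying out the substitution step, and your remark about the typo $b$ versus $a$ in the statement of the first case is well taken.
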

\begin{proof}
From the coordinate-wise expression~\eqref{eqn:gf_coordinate-wise}, we have 
\begin{align*}
    \frac{\partial F_{ij}}{\partial V_{ij}}&=-v^2\lambda_i\lambda_j\gamma_i&\quad\text{any}\quad i,j\\
    \frac{\partial F_{ij}}{\partial V_{kl}}&=0&(i,j)\not=(k,l)\\
    \frac{\partial F_{ii}}{\partial v}&=\lambda_i^2-2v\lambda_i^2\gamma_iV_{ii}\\
    \frac{\partial F_{ij}}{\partial v}&=-2v\lambda_i\lambda_j\gamma_iV_{ij}& i\not=j\\
    \frac{\partial f}{\partial V_{ii}}&=\lambda_i^2-2v\lambda_i^2\gamma_iV_{ii}\\
    \frac{\partial f}{\partial V_{ij}}&= -2v\lambda_i\lambda_j\gamma_iV_{ij}& i\not=j\\
    \frac{\partial f}{\partial v}&=-\sum_{i,j}\lambda_i\lambda_j\gamma_iV_{ij}^2 &\quad\text{any}\quad i,j\, .
\end{align*}
So that with a vertical vector: $U=(\vect_k(V_{kk}),v,\vect_{i\not=j}(V_{ij}))$, 
where $\vect_i(a_i)$ is a vertical vector of all $(a_i)$ entries we get a ${\partial F}/{\partial U}$ matrix of dimension $d^2+1\times d^2+1$ represented by the blocks of dimensions:
\begin{align*}
    \text{Dimensions of }\frac{\partial F}{\partial U}
    \text{  are  }
    \left(
    \begin{array}{ccc}
    d\times d&d\times 1& d\times d(d-1)\\
    1\times d &1\times 1 & 1\times d(d-1)\\
    d(d-1)\times d & d(d-1)\times 1& d(d-1)\times d(d-1)
    \end{array}    \right)
   \end{align*}

\begin{align*}
    \qquad \frac{\partial F}{\partial U}=&
    \left(
    \begin{array}{ccc}
     \frac{\partial \dot{V}_{kk}}{\partial V_{kk}}
     &\frac{\partial \dot{V}_{kk}}{\partial v}
     &\frac{\partial \dot{V}_{kk}}{\partial {V}_{ij}}\\
     \frac{\partial \dot{v}}{\partial V_{kk}}
     &\frac{\partial \dot{v}}{\partial v}
     &\frac{\partial \dot{v}}{\partial {V}_{ij}}\\
     \frac{\partial \dot{V}_{ij}}{\partial V_{kk}}\
     &\frac{\partial \dot{V}_{ij}}{\partial v}
     &\frac{\partial \dot{V}_{ij}}{\partial {V}_{ij}}     
    \end{array}    \right)\\
    =&
    \left(
    \begin{array}{lll}
    -v^2\diag_k(\lambda_k^2\gamma_k)
    &\vect_k(\lambda_k^2)-2vS_k
    & 0
    \\[0.1in]
    \vect_k(\lambda_k^2)^\top-2vS_k^\top
    &-\sum_{ij}\lambda_i\lambda_j V_{ij}^2
    &-2vS_{i\neq j}^\top
    \\[0.1in]
    0
    & -2vS_{i\neq j}
    &-v^2\diag_{i\not=j}(\lambda_i\lambda_j\gamma_i)
    \end{array}
    \right)    
\end{align*}
where $S_k=\vect_k(\lambda_k^2\gamma_k V_{kk})$ and $S_{i\neq j}:= \vect_{i\not=j}(\lambda_i\lambda_j\gamma_iV_{ij})$.
\end{proof}

\subsubsection{Calculation of Eigenvalues}

In order to establish the character of a critical point, we investigate the eigenvalues of the linearization matrix. 
Let $(\Delta,\delta)$ be represented by the following $d^2+1$ vector $(\Delta_{ii}, \delta, \Delta_{i\not=j})$.
The $(d^2+1)\times (d^2+1)$ linearization  matrix is composed by the following blocks: a diagonal $d\times d$ matrix ${\diag}(t_i)$, a $d\times 1$ column $b_i$, a
$d \times (d^d-d)$ zero matrix, a $1\times 1$ scalar  $t_0$, 
a $1\times (d^2-d)$ row $e_{ij}$, a $(d^2-d)\times d$ zero matrix, a $(d^2-d)\times 1$ column $g_{ij}$, a diagonal $(d^2-d)\times (d^2-d)$ matrix ${\diag}(h_{ij})$:
\begin{equation*}
    \left(
    \begin{array}{ccc}
    {\diag}(t_i) & b_i & 0\\
    c_i& t_0 & e_{ij}  \\
    0 & g_{ij} &{\diag} (h_{ij})
    \end{array}
    \right)
\end{equation*}
where for $i=1\dots d$, $t_i=\partial F_{ii}/\partial V_{ii}$, $b_i=\partial F_{ii}/\partial v$, $c_i=\partial f/\partial V_{ii}$ and $t_0=\partial f/\partial v$. Additionally for $i\not=j$: 
$e_{ij}=\partial f/\partial V_{ij}$, $g_{ij}=\partial F_{ij}/\partial v$ and $h_{ij}=\partial F_{ij}/\partial V_{ij}$.

\vskip 0.5cm
\noindent
\textbf{Case $(W,w)=(A,a)$}

In this case, we have 
$\diag(t_i)=\diag( -a^2\lambda_i^2\gamma_i)$, 
$b_i=c_i=-\lambda_i^2$, $t_0=-\sum_i\lambda_i^2\gamma_iA_{ii}^2=
-\frac{1}{a^2}\sum_i\frac{\lambda_i^2}{\gamma_i}$, 
$e_{ij}=g_{ij}=0$, $\diag(h_{ij})=\diag(-a^2\lambda_i\lambda_j\gamma_i)$.
The characteristic polynomial is 
\begin{align*}
  P(\mu)&=   \left|
    \begin{array}{ccc}
   {\diag_i}(\mu+a^2\lambda_i^2\gamma_i) & -\lambda_i^2 & 0\\
    -\lambda_i^2 &\mu+\frac{1}{a^2}\sum_i\frac{\lambda_i^2}{\gamma_i}  & 0  \\
    0 & 0 &{\diag_{ij}} (\mu+a^2\lambda_i\lambda_j\gamma_i)
    \end{array}
    \right|\\
    &=P_A(\mu)\cdot \prod_{i\not=j}(\mu+a^2\lambda_i\lambda_j\gamma_i),
    \end{align*}
    where
    \begin{align*}
    P_A(\mu)&= \left|
    \begin{array}{cc}
   {\diag_i}(\mu+a^2\lambda_i^2\gamma_i) & -\lambda_i^2 \\
    -\lambda_i^2 &\mu+\frac{1}{a^2}\sum_i\frac{\lambda_i^2}{\gamma_i} 
    \end{array}
    \right|.
\end{align*}

\vskip 0.5cm
\noindent
\textbf{Case $(W,w)=(B,0)$}

In this case, we have $\diag(t_i)=0$, $b_i=c_=\lambda_i^2$, $t_0=-\sum_{ij}\lambda_i\lambda_j\gamma_i B_{ij}^2$, 
$e_{ij}=g_{ij}=0$, $\diag(h_{ij})=0$.
And the characteristic polynomial is given by 
\begin{align*}
  P(\mu)&=   \left|
    \begin{array}{ccc}
   {\diag_i}(\mu) & \lambda_i^2 & 0\\
    \lambda_i^2 &\mu+\sum_{ij}\lambda_i\lambda_j\gamma_i B_{ij}^2  & 0  \\
    0 & 0 &{\diag_{ij}} (\mu)
    \end{array}       
    \right|=P_B(\mu)\cdot \mu^{d^2-d},
    \end{align*}
    where
    \begin{align*}
P_B(\mu)&=\left|\begin{array}{cc}
   {\diag_i}(\mu) & \lambda_i^2 \\
    \lambda_i^2 &\mu+\sum_{ij}\lambda_i\lambda_j\gamma_i B_{ij}^2  
    \end{array}
    \right|.
\end{align*}

In both cases the determinants $P_B$ and $P_A$ have a similar structure
\begin{equation*}
R_d= \left|
    \begin{array}{cc}
   {\diag_i}(t_i) & b_i \\
    b_i &t_0 
    \end{array}
    \right|. 
\end{equation*}
Suppose that the index $i$ runs from $d$ down to 1. 
\begin{lem}\label{lem:det R} $R_d$ satisfies the recursion, $R_d= t_d R_{d-1}-b_d^2\prod_{k=1}^{d-1} t_k$. Therefore, 
\begin{align*}
R_d&=\prod_{k=1}^d t_k\cdot(t_0-\sum_{j=1}^d\frac{b_j^2}{t_j})\,.
\end{align*}
\end{lem}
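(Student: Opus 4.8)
The plan is to prove the recursion by a Laplace (cofactor) expansion of the $(d+1)\times(d+1)$ determinant $R_d$ along its $d$-th row, and then obtain the closed form by induction on $d$.

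Order the rows and columns of the matrix underlying $R_d$ as $1,2,\dots,d$ followed by the ``corner'' index; then the only nonzero entries of row $d$ are $t_d$ (in column $d$) and $b_d$ (in the corner column), so expansion along this row produces exactly two terms. The first, from the entry $t_d$, carries cofactor sign $(-1)^{2d}=+1$ and minor equal to the matrix of the same arrow shape on the indices $1,\dots,d-1$, i.e. $R_{d-1}$, contributing $t_d R_{d-1}$. The second, from the entry $b_d$, carries cofactor sign $(-1)^{2d+1}=-1$; its minor is a $d\times d$ matrix whose first $d-1$ rows are $\diag(t_1,\dots,t_{d-1})$ bordered by a zero column and whose last row is $(b_1,\dots,b_d)$. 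A further expansion of this minor along its last column, where only $b_d$ survives, leaves $\prod_{k=1}^{d-1} t_k$, so this term contributes $-b_d^2\prod_{k=1}^{d-1} t_k$, and altogether $R_d = t_d R_{d-1} - b_d^2\prod_{k=1}^{d-1} t_k$. The only point requiring care is the sign bookkeeping in the two nested cofactor expansions; everything else is immediate.

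For the closed form I would induct on $d$. The base case $d=1$ is the direct computation $R_1 = t_1 t_0 - b_1^2 = t_1\bigl(t_0 - b_1^2/t_1\bigr)$. Assuming $R_{d-1} = \prod_{k=1}^{d-1} t_k\,\bigl(t_0 - \sum_{j=1}^{d-1} b_j^2/t_j\bigr)$, substitute into the recursion and factor out $\prod_{k=1}^{d} t_k$, using $b_d^2\prod_{k=1}^{d-1} t_k = \bigl(\prod_{k=1}^{d} t_k\bigr) b_d^2/t_d$, to reach $R_d = \prod_{k=1}^{d} t_k\,\bigl(t_0 - \sum_{j=1}^{d} b_j^2/t_j\bigr)$, as claimed. (Alternatively, when all $t_i\neq 0$ the closed form can be read directly from the Schur-complement identity $\det\!\begin{pmatrix} D & b\\ b^\top & t_0\end{pmatrix} = \det(D)\,\bigl(t_0 - b^\top D^{-1} b\bigr)$ with $D=\diag(t_i)$, then extended to arbitrary $t_i$ since both sides are polynomials; but the recursion route is self-contained and needs no nondegeneracy assumption.) I do not expect a genuine obstacle here — the one delicate spot is getting the signs right in the double cofactor expansion, which is exactly what yields the $-b_d^2$ term rather than $+b_d^2$.
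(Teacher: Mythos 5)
Your proof is correct and follows essentially the same route as the paper: a cofactor expansion producing the two terms $t_d R_{d-1}$ and $-b_d^2\prod_{k=1}^{d-1}t_k$ (the paper expands along the first column of the symmetric matrix rather than the $d$-th row, which is the mirror of your computation), followed by unrolling the recursion to get the closed form. The sign bookkeeping and the induction step both check out.
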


\noindent
We follow the convention that the empty product is one and the empty sum is zero. 
\begin{proof}
    We expand the determinant with respect to the first column, which has only two nonzero entries: $t_d$ in the first row and $b_d$ in the $(d+1)$-th row, obtaining a combination of two smaller $d\times d$ determinants $t_d R_{d-1}$ and $(-1)^d b_d Q_{d}$. The first row of $Q_d$ has only one non-zero entry, $b_d$ in the $d$th column, expanding $Q_{d}$ with respect to this column we get $(-1)^{d-1}b_d\cdot|\diag(t_i)_{i=d-1\dots 1}|$. Combining them together, we obtain the recursive formula, and the expression follows immediately.
    
\end{proof}
\subsubsection{Stability Behavior at Critical Points}
\begin{thm}
\label{thm:critical_points}
The linearization matrix at critical point is symmetric, and thus all its eigenvalues are real. 
\begin{itemize}
\item
The critical points of type $(W,w)=(A,a)$ are attractors, the eigenvalues of the
linearization matrix are non-positive. There is an eigenvalue equal to zero which corresponds to the eigen-direction orthogonal to the invariant leaf. A higher multiplicity of the eigenvalue zero is only possible when some $\lambda_j=0$.
\item
The critical points of type $(W,w)=(B,0)$ are degenerate saddles. There are $d^2-d$ zero eigenvalues associated with the variables $\Delta_{i\not=j}$ defined in Proposition~\ref{pro:linearization} and $d-1$ eigenvalues $0$  from $P_B$ and the two remaining eigenvalues are real, non-zero and have different signs, as they product is negative. 
\end{itemize}
\end{thm}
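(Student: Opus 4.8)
The plan is to analyze the two cases separately using the reduction to the $2\times 2$ block determinant $R_d$ established in Lemma~\ref{lem:det R}, after first recording the (already visible) fact that the linearization matrix is symmetric. Symmetry is immediate from Proposition~\ref{pro:linearization}: the off-diagonal coupling between the $\Delta_{ii}$-block and $\delta$ is $b_i=c_i$ (equal to $-\lambda_i^2$ in case $A$ and $\lambda_i^2$ in case $B$), the $\Delta_{i\neq j}$ rows/columns decouple entirely ($e_{ij}=g_{ij}=0$), and every block on the diagonal is itself diagonal. Hence all eigenvalues are real, and it remains to pin down their signs.

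For the attractor case $(W,w)=(A,a)$: the $d^2-d$ eigenvalues coming from $\diag_{ij}(-a^2\lambda_i\lambda_j\gamma_i)$ are manifestly $\le 0$ (strictly negative unless some $\lambda_j=0$). For the remaining $d+1$ eigenvalues I would apply Lemma~\ref{lem:det R} with $t_i=\mu+a^2\lambda_i^2\gamma_i$, $b_i=-\lambda_i^2$, $t_0=\mu+\frac{1}{a^2}\sum_i \lambda_i^2/\gamma_i$, giving
\begin{align*}
P_A(\mu)=\Big(\prod_k(\mu+a^2\lambda_k^2\gamma_k)\Big)\Big(\mu+\frac{1}{a^2}\sum_i\frac{\lambda_i^2}{\gamma_i}-\sum_j\frac{\lambda_j^4}{\mu+a^2\lambda_j^2\gamma_j}\Big).
\end{align*}
One checks $P_A(0)=0$ (the bracket vanishes at $\mu=0$ because $\frac{1}{a^2}\sum \lambda_i^2/\gamma_i=\sum \lambda_j^4/(a^2\lambda_j^2\gamma_j)$), so $0$ is an eigenvalue; this is the direction transverse to the invariant leaf of the foliation~\eqref{eqn: codim  1 foliation}. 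To see the other $d$ roots of the bracket are strictly negative (when all $\lambda_j>0$), I would use the standard interlacing argument: the scalar function $\mu\mapsto \mu+\frac{1}{a^2}\sum\lambda_i^2/\gamma_i-\sum_j \lambda_j^4/(\mu+a^2\lambda_j^2\gamma_j)$ is strictly increasing on each interval between consecutive poles $-a^2\lambda_j^2\gamma_j<0$ and runs from $-\infty$ to $+\infty$ there, forcing exactly one root in each such interval, all negative; combined with the already-factored negative roots $-a^2\lambda_k^2\gamma_k$ this accounts for everything and shows the zero eigenvalue is simple unless some $\lambda_j=0$ collapses poles together.

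For the saddle case $(W,w)=(B,0)$: here $\diag(t_i)=0$ and $\diag(h_{ij})=0$, so $\mu^{d^2-d}$ already supplies $d^2-d$ zero eigenvalues (the $\Delta_{i\neq j}$ directions), and Lemma~\ref{lem:det R} gives $P_B(\mu)=\mu^{d}\big(\mu+\sum_{ij}\lambda_i\lambda_j\gamma_i B_{ij}^2\big)-\mu^{d-1}\sum_j\lambda_j^4=\mu^{d-1}\big(\mu^2+(\sum_{ij}\lambda_i\lambda_j\gamma_i B_{ij}^2)\mu-\sum_j\lambda_j^4\big)$. Thus $P_B$ contributes $d-1$ more zeros and a quadratic factor whose constant term $-\sum_j\lambda_j^4<0$; since the product of its two roots equals that negative constant, the roots are real, nonzero, and of opposite sign — one positive, one negative — which is exactly the claimed degenerate-saddle structure. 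The main obstacle I anticipate is the bookkeeping in case $A$: making the interlacing/counting argument airtight when eigenvalues of $\Lam$ coincide or vanish (so that poles of the rational function merge, or the factored roots $-a^2\lambda_k^2\gamma_k$ collide with roots of the bracket), and confirming in those degenerate situations that the multiplicity of the zero eigenvalue rises only through a vanishing $\lambda_j$ and never otherwise.
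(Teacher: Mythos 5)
Your proposal is correct and follows the same route as the paper: symmetry of the linearization from $b_i=c_i$ and $e_{ij}=g_{ij}=0$, the block structure with the diagonal $\Delta_{i\not=j}$ factor split off, the reduction of the remaining $(d+1)\times(d+1)$ determinant via Lemma~\ref{lem:det R}, and the identical quadratic-factor argument in case $(B,0)$. The only divergence is in case $(A,a)$: where you run a secular-equation/interlacing argument (one root of the bracket in each interval between the poles $-a^2\lambda_j^2\gamma_j$), the paper simply observes that $P_A(\mu)>0$ for $\mu>0$ and that all roots are real by symmetry, so all roots are non-positive; this sidesteps the degenerate bookkeeping you flag when eigenvalues of $\Lam$ coincide or vanish. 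One small inaccuracy to fix in your write-up: in the generic case the "already-factored" roots $-a^2\lambda_k^2\gamma_k$ are \emph{not} eigenvalues of the $P_A$ block --- they cancel against the simple poles of the bracket, and the $d+1$ roots of the bracket alone exhaust the spectrum of that block (the factored roots reappear, with multiplicity reduced by one, only when some $\lambda_k$ coincide). This does not affect your conclusions, since those roots are negative in any case.
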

\begin{proof}
The symmetry of the linearization matrix is obvious. Apply Lemma~\ref{lem:det R} to the two cases.

\vskip 0.5cm
\noindent
\textbf{Case $(W,w)=(A,a)$}.
There are $d^2-d$ negative eigenvalues $\mu=-a^2\lambda_j\lambda_j\gamma_i$ associated with variables $V_{i\not=j}$. Meanwhile, the degree $d+1$ factor of the characteristic polynomial associated with the variables $v$ and $V_{ii}$ takes the form, 
\begin{align*}
P_A(\mu)&=\prod_{k=1}^d(\mu+a^2\lambda_k^2\gamma_k)
\cdot\left(\mu+\frac{1}{a^2}\sum_{j=1}^d\frac{\lambda_j^2}{\gamma_j}
-\sum_{j=1}^d\frac{\lambda_j^4}{\mu+a^2\lambda_j^2\gamma_j}\right)\\
&=\prod_{k=1}^d(\mu+a^2\lambda_k^2\gamma_k)
\cdot\left(\mu+\sum_{j=1}^d
\frac{\mu\lambda_j^2}{a^2\gamma_j(\mu+a^2\lambda_j^2\gamma_j)}\right)
\end{align*}
By symmetry, all its zeros are real.
As $\gamma_i>0$,  for $\mu>0$ we have $P_A(\mu)>0$, so all its real zeros, or the eigenvalues of the linearization matrix are non-positive.  
We note that $\mu=0$ is one of the zeros of $P_A$ as well, and it corresponds to the eigenvalue $0$ with the eigen-direction orthogonal to the invariant surface $v^2=\tr (VV^\top)+\kappa$. A higher multiplicity of the eigenvalue $\mu=0$ is only possible when some $\lambda_j=0$.

\vskip 0.5cm
\noindent
\textbf{Case $(W,w)=(B,0)$}.
From the characteristic polynomial,
\begin{align*}
P_B(\mu)&=\mu^d\left(\mu+\sum_{ij}\lambda_i\lambda_j\gamma_i B_{ij}^2 -\sum_{j=1}^d\frac{\lambda_i^4}{\mu}\right)\\
&=\mu^{d-1}\left(\mu^2+\sum_{ij}\lambda_i\lambda_j\gamma_i B_{ij}^2\mu-\sum_{j=1}^d\lambda_i^4\right)\,,
\end{align*}
we conclude that the points $B$ are degenerate saddle points. 
There are $d^2-d$ zero eigenvalues associated with the variables $\Delta_{i\not=j}$ and $d-1$ eigenvalues $0$ from $P_B$ and the two remaining eigenvalues are real, non-zero and have different signs, as their product is negative. 
It is worth mentioning that the negative eigenvalue is stronger than the positive one, since their sum is negative as well. 
\end{proof}

\begin{rem}
\label{rem:global}
Because the system~\eqref{eqn: dot U} and~\eqref{eqn: dot u} is a gradient flow, almost all trajectories converge to attractors of type $(A,a)$ except the trajectories lying on the separatrices of saddle points of type $(B,0)$ which form a finite union of smooth manifolds of positive codimension, which implies that they form a null set in the sense of Lebesgue measure. 
\end{rem}

\begin{rem}
    \label{rem: simplified grad flow}
    The simplified system~\eqref{eqn: dot U} and~\eqref{eqn: dot u} can be viewed as a gradient flow of a simplified, more concise cost function $L$ presented below. 
    This function $L$ is different from both cost functions in the article~\cite{zhang2023trained}
where we have a very non-intuitive
$L=\ex\left(\mathcal {U^\top H U}-w x\right)^2$ and (c.f. A.11) $\tilde{\ell}=\tr[v\Lambda(v\Gamma V/2-\Id)\Lambda V^\top]$. All three expressions provide the same gradient flow.
\end{rem}
Let $\lambda_i, \gamma_i>0$, $i=1,\dots,d$ ($\gamma_i=\frac{1}{N}\lambda_i+(1+\frac{1}{N}\sum_i\lambda_i)$). Consider the following function $L:\Real^d\times \Real\times \Real^{d^2-d}\to \Real$, $(V_{ii},v,V_{ij})\mapsto L(V_{ii},v,V_{ij})$, where $i,j\in\{1,\dots,d\}, i\not=j$:
\begin{align*}
    L(V_{ii},v, V_{ij})&=\frac{1}{2}\left(
    \sum_{i\not=j}\frac{\lambda_i\lambda_j}{\gamma_i}(-v\gamma_i V_{ij})^2+
    \sum_i \frac{\lambda_i^2}{\gamma_i}(1-v\gamma_i V_{ii})^2
    \right)\\
    &=
    \frac{1}{2}\left(
    \sum_{i,j}\lambda_i\lambda_j\gamma_iv^2 V_{ij}^2+
    \sum_i \frac{\lambda_i^2}{\gamma_i}-2\sum_i\lambda_i^2\gamma_iv V_{ii}
    \right)\,,\\
    \frac{\partial L}{\partial V_{ii}}&=
    -v\lambda_i^2(1-v\gamma_i V_{ii})\,,\\
    \frac{\partial L}{\partial v}&=
    \sum_{i\not=j}v\lambda_i\lambda_j\gamma_i V_{ij}^2-
    \sum_i \lambda_i^2(1-v\gamma_i V_{ii})V_{ii}\\
    &=
    \sum_{i,j}v\lambda_i\lambda_j\gamma_i V_{ij}^2-
    \sum_i \lambda_i^2\gamma_iV_{ii}\,,
    \\
    \frac{\partial L}{\partial V_{ij}}&=v^2\lambda_i\lambda_j\gamma_i V_{ij} &i\not=j\\\
\end{align*}

\subsection{Invariant Sets and Their Dependence on $\kappa$}
The subspace $V_{ij}=0$, $i\not= j$ is invariant. There $\dot{V}_{ij}=0$ and the trajectories starting there cannot move outside this subspace. We observe that unless $v=0$ this subspace is attracting trajectories, as a result of $\lambda, \gamma>0$ the coordinates $V_{ij}$, $i\not=j$ exhibit a negative feedback behavior.

Each codimension-one manifold (rotational hyperboloid) $v^2=\sum_{ij} V_{ij}+\kappa$ is invariant for any constant $\kappa$, that is, if a trajectory starts on such manifold, with $\kappa$ determined by the initial conditions, it will not leave this manifold.
All such manifolds form an invariant \emph{lamination} of the space. The shape of invariant manifolds,  leaves of the lamination, depends on $\kappa$, as illustrated on Figure~\ref{fig:kappa shape}
\begin{figure}[h]
\center
\includegraphics[scale=0.156]{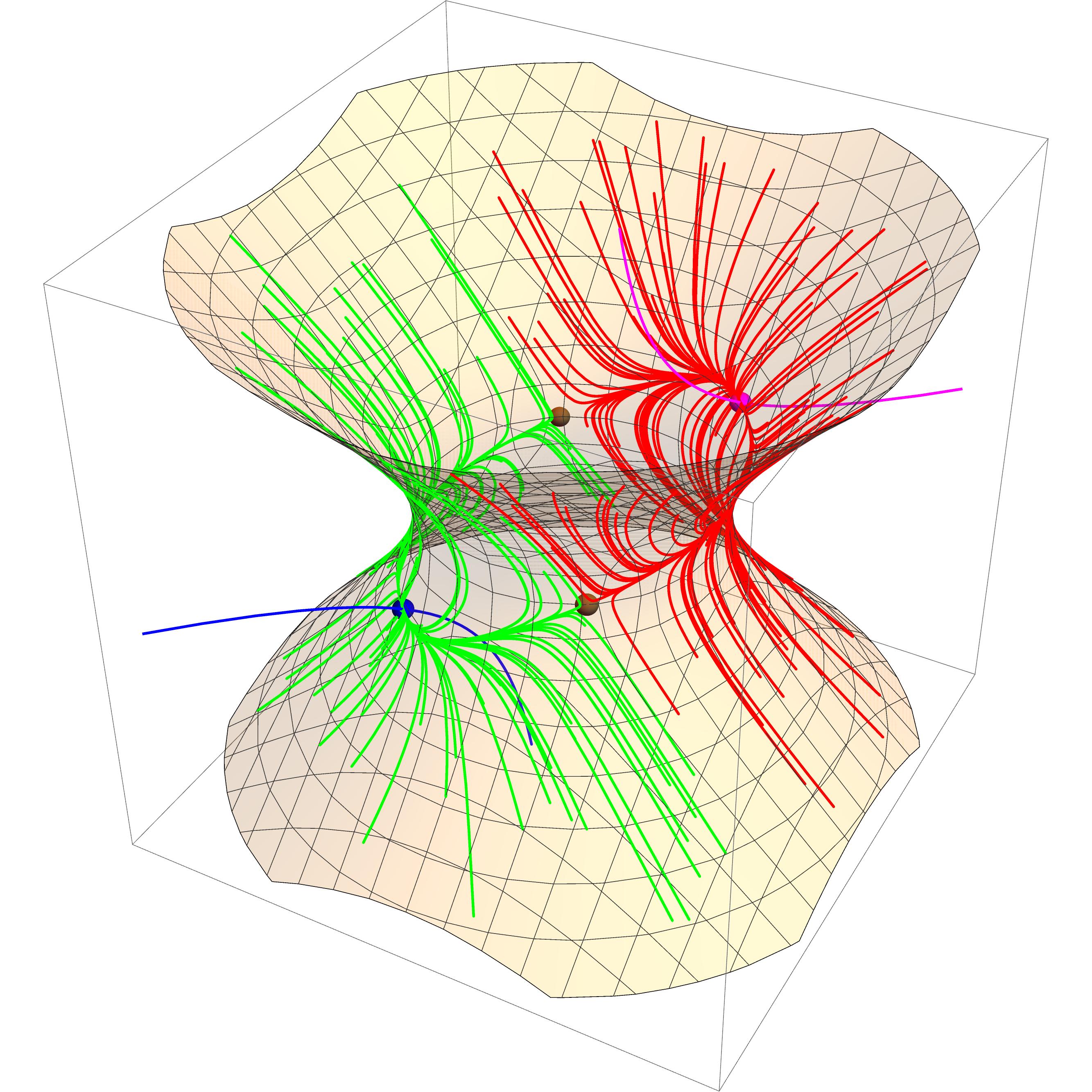}
\includegraphics[scale=0.156]{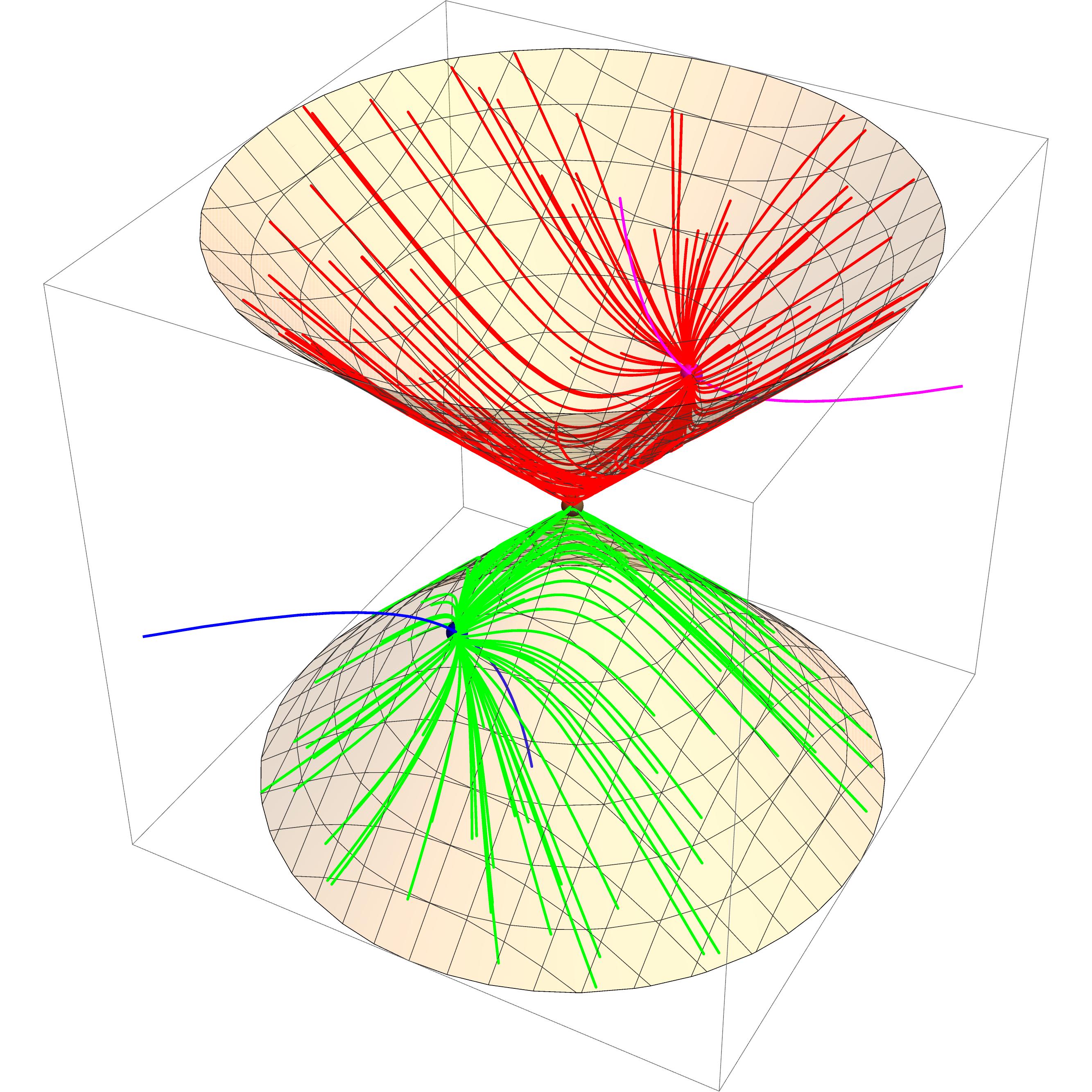}
\includegraphics[scale=0.156]{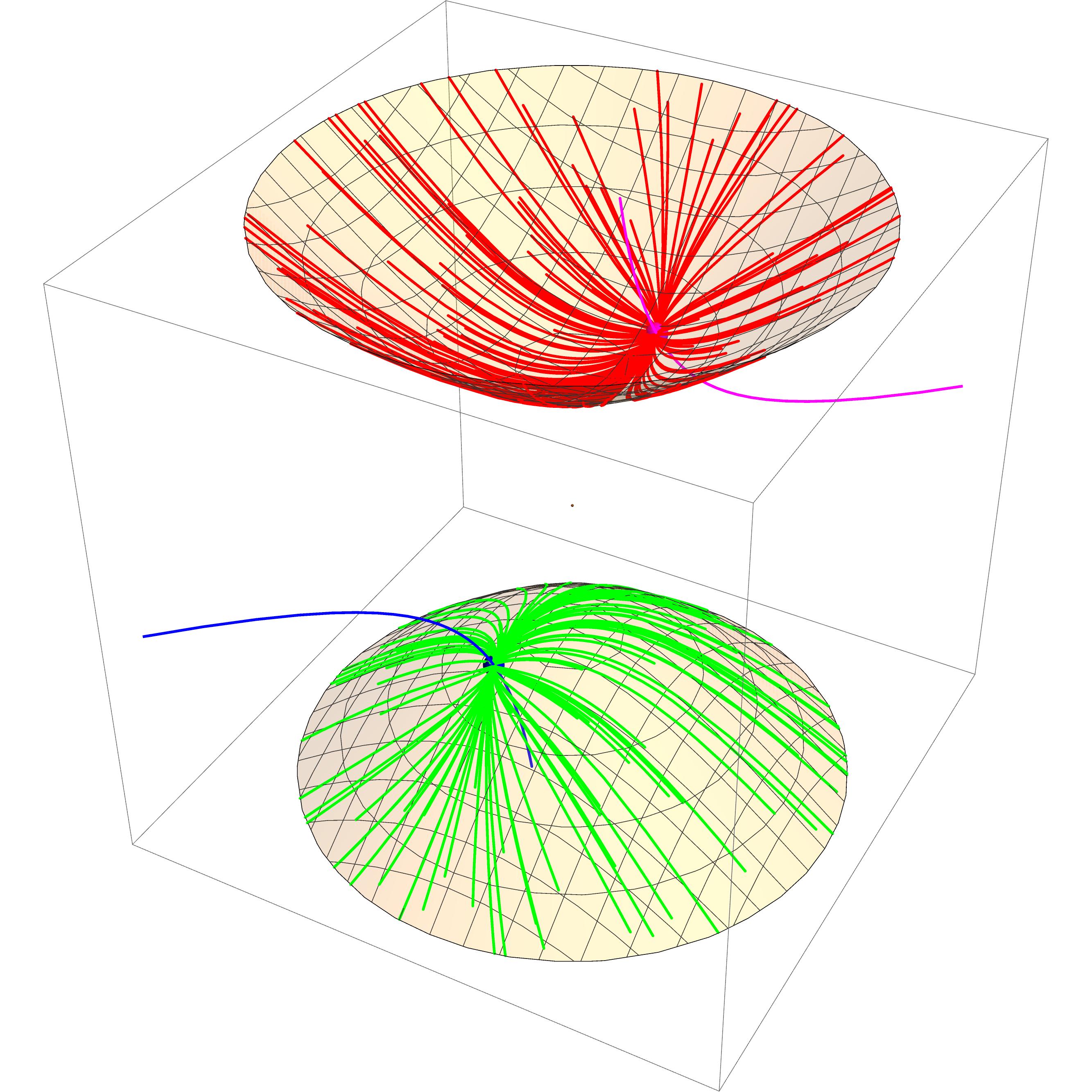}
    \caption{
    From left to right: $\kappa<0$, $\kappa=0$, $\kappa>0$.
    }
    \label{fig:kappa shape}
\end{figure}
\begin{itemize}
    \item[$\kappa<0$,] 
 the manifold is connected, all values of $v$ are allowed and for $d>1$ the spheres $\sum V_{ij}^2=v^2 -\kappa\ge -\kappa>0$ are connected.
\item[$\kappa=0$,]  
 the hyperboloid becomes a cone, with the unique point B at 0.
\item[$\kappa>0$,]  the manifold is disconnected, as in this case we have $v^2\ge \kappa>0$, that is, $v$ cannot lie in some interval around 0. We note that in this case the points B do not exist (to be more precise, the linear subspace B is disjoint from the invariant hyperboloid), so the only critical points are the two points A.
\end{itemize}
In all cases, the two components of the curve $v\Gamma U=\Id$ consisting of the points $A$ intersect each leaf of invariant lamination, defined by $\kappa$, exactly at two points, symmetric with respect to the point $0$. At $A$ we have $V_{ij}=0$, $i\not=j$ and $V_{ii}=1/v\gamma_i$ so the intersection points are given by
$v=v(A,\kappa)=\pm\sqrt{\frac{\kappa+\sqrt{\kappa^2+4\sum1/\gamma_i^2}}{2}}$. We note that $v(A,\kappa)$ is well defined for every $\kappa$.

For each $\kappa$, the critical points A and B lie on a ($\kappa$ dependent) codimension one linear hyperplane $\alpha\sum \lambda_i^2 V_{ii}+\beta v=0$, where $\alpha, \beta$ satisfy $\alpha\sum\lambda_i^2/\gamma_i+\beta v^2(A,\kappa)=0$.

\section{Analysis of the System of Equations When $d=1$}
\label{sec:analysis_1D}

When $d=1$, $v, U, z, Z$ are all scalars. We will also use the scalar $\g>0$ to replace $\Lam$. Therefore, omitting the constant term, the loss function takes the form
\[
L=-\lambda^2 (vU+zZ)+\frac{\lambda^3}{2}\left[ \alpha (v^2U^2+z^2Z^2)+\beta v^2Z^2+\delta  U^2z^2+2\gamma vUzZ\right]\,,
\]
where $\alpha =\frac{(N+2)}{N}>1$, $\beta=\frac{(3N+6)}{N}>3$, $\gamma=\frac{(2N+7)}{N}>2$ and $\delta=\frac{(N+8)}{N}+\frac{15}{N^2}>1$, but $2\gamma<\beta+\delta$. 
The gradient of $L$ equals
\begin{align}\label{equ:gradL}
\begin{array}{rcl}
\frac{\partial L}{\partial U} &=& -\g^2 v + \g^3(\alpha v^2U +\delta z^2U+ \gamma zZ),
\\
\frac{\partial L}{\partial z} &=&-\g^2 Z+ \g^3(\alpha zZ^2+\delta zU^2+\gamma vZU),
\\
\frac{\partial L}{\partial Z} &=& -\g^2 z  +\g^3(\alpha  z^2Z+ \beta v^2Z+ \gamma vzU)\,,
\\
\frac{\partial L}{\partial v} &=& -\g^2 U  +\g^3(\alpha  vU^2+ \beta vZ^2+ \gamma vzZU)\,.
\end{array}
\end{align}
Denote $T=(U,z,Z,v)$, the evolution equation is then given by $\frac{\partial T}{\partial t}=-\frac{\partial L}{\partial T}$. Without changing the dynamics we can rescale the variables  
 $S=(\tilde{U},\tilde{z}, \tilde{Z}, \tilde{v})=\sqrt{\lambda}(U,z,Z,v)$ and the function $\tilde{L}=\lambda^{-3/2} L$ (or rescaling the time $\lambda^{3/2} \partial t=\partial s$ with $\dot{S}=\frac{\partial S}{\partial s}$), thus obtain, skipping the tilde's, the following expression for the new function $L$ 
 \[
 L = - (Uv+zZ)+\frac{1}{2}( \alpha (v^2U^2+z^2Z^2)+\beta v^2Z^2+\delta z^2U^2)+\gamma vUzZ
 \]
 and its gradient flow:
\begin{align}
\begin{array}{rcl}\label{equ:gradflowL}
{\dot U} &=&  v - \alpha  v^2U - \delta z^2U-  \gamma vzZ,
\\
{\dot z} &=& Z- \alpha  zZ^2-\delta zU^2-\gamma vZU, 
\\
{\dot Z} &=&  z  -\alpha  z^2Z- \beta v^2Z- \gamma vzU,
\\
{\dot v} &=& U - \alpha  vU^2  - \beta vZ^2- \gamma zZU\,.
\end{array}
\end{align}
\begin{rem}
Writing $P=\alpha (vU+zZ-\frac{1}{\alpha })^2\ge 0$ and $R=\beta (vZ)^2+\delta (zU)^2+2(\gamma-\alpha)(vZ)(zU)\ge 0$ as $(\gamma-\alpha)^2<\beta\delta$,  we have  
\begin{align*}
2 L&=P-\frac{1}{\alpha }+R\ge -\frac{1}{\alpha }\,.
\end{align*}
Therefore, $L$ is bounded from below and its level sets provide bounds for the following aggregates of variables: $vU, zZ, zU$ and $vZ$.
\end{rem}

\subsection{Symmetry and Invariance}
There is duality in the equations in the following sense: \emph{whatever can be said about the pair $(v,U)$ is by duality true for the pair $(Z, z)$}.  In particular, if $S(s)=(U, z,Z, v)$ is a trajectory of~\eqref{equ:gradflowL} then so is $(z, U, v, Z)$. 

We have
\begin{align}
    \frac{\partial(v^2-U^2)}{\partial s}&=
    2(-\beta v^2Z^2+\delta z^2U^2)=
    \frac{\partial(Z^2-z^2)}{\partial s}
    \label{eqn:v2-U2}
    \\
    \frac{\partial(v^2-Z^2)}{\partial s}&=
    2(vU-zZ-\alpha(v^2U^2-z^2Z^2))=\frac{\partial(U^2-z^2)}{\partial s} ,
    \label{eqn:v2-Z2}    
\end{align}
therefore each trajectory is confined to one of the invariant co-dimension one manifolds $K_\kappa=\{S=(U,z,Z,v): -U^2+z^2-Z^2+v^2=\kappa\}$, the invariant lamination,  where the parameter $\kappa\in \Real$ is established by the initial condition of the trajectory. However, the duality changes the sign of $\kappa$. 

The manifold $K_\kappa$ is a union of one parameter family of tori:
$T_{\rho, \mu}=\{S:v^2+z^2=\mu,\  U^2+Z^2=\rho\}$, $\mu, \rho\ge 0$ and $\mu=\rho+\kappa$. For each $\kappa$, $K_\kappa$  is connected. Each torus is connected, and each two tori $T_{\rho, \mu}$, $T_{\sigma, \tau}$ are connected by the path of tori: $T_{(1-p)\rho+p\sigma, (1-p)\mu+p\tau}\subset K_\kappa$, $p\in[0,1]$. Exceptionally, when $\kappa=0$ one member of this family of tori is a point $(0,0,0,0)$, it happens when $\rho=0=\mu$.

\emph{Note that the connectedness argument does not work with three variables, as the 0-dimensional  torus is not connected.}

The condition $(Z,z)=(0,0)$ is invariant and yields a system for $(v,U)$, $\dot{U}=v-\alpha U v^2, \dot{v}=U-\alpha v U^2$, with a critical point $(0,0)$ and a critical curve $\alpha vU=1$, which contains two symmetric points on the invariant manifold $v^2-U^2=\kappa$.
By duality, the condition $(v,U)=(0,0)$ is invariant as well and yields an analogous system $\dot{z}=Z-\alpha z Z^2, \dot{Z}=z-\alpha z Z^2$ with analogous critical points.

The condition: both $v=Z$ and $z=U$, leading to $\kappa=0$, is invariant as it yields to the system
$\dot{Z}=U-\alpha U^2Z-\beta Z^3-\gamma U^2Z, 
\dot{U}=Z-\alpha U Z^2-\delta  U^3-\gamma UZ^2$ and the same equations hold for the variables $(z,v)$. 
Here again, the point $(0,0,0,0)$ is critical and the curve (in coordinates $(U,Z)$ or a two-dimensional surface in coordinates $S$) $1=(\alpha+\gamma+\sqrt{\beta\delta})UZ$ consists of critical points with the additional constraint $\beta Z^4=\delta U^4$. 

The system~\eqref{equ:gradflowL} and the parameter $\kappa$ remain the same when both variables $(v,U)$ change the sign; a similar statement holds for the change of signs for $(z,Z)$, and therefore for the change of signs of all four variables. That means that if $S(s)=(U,z,Z,v)$ is a trajectory on $K_\kappa$, then so are $(-U, z,Z, -v)$, $(U,-z,-Z,v)$, and $(-U,-z, -Z, -v)$.

\subsection{Critical (or Critical)  Points}
At critical points, where $\frac{\partial S}{\partial s}=(0,0,0,0)$, we have, by \eqref{eqn:v2-U2}, \eqref{eqn:v2-Z2} and the invariance of the manifolds $K_\kappa$

\begin{align}\label{equ:crit cond}
\begin{array}{rcl}
\beta v^2Z^2 &=& \delta z^2U^2
\\
vU- zZ  &=& \alpha (v^2 U^2-z^2 Z^2)=\alpha(vU-zZ)(vU+zZ)
\\
\kappa  &=&  v^2 -U^2 -Z^2+z^2
\\
0 &=&  U - \alpha  vU^2  - \beta vZ^2- \gamma zZU\, .
\end{array}
\end{align}
Where for the fourth equation, closing the system, may be  chosen from any equation of the original $\dot{S}=0$. The fourth equation 
\begin{thm}
        In case $\kappa=0$, the point $O=(0,0,0,0)$ is critical. For all $\kappa$'s the critical points can be of two types:
    \begin{enumerate}
        \item[$A$:] 
        Four points  with two variables equal zero, 
        \begin{itemize}
            \item 
        two points $(U,0,0,v)$ with $1=\alpha v U>0$ and 
        \\ 
        $U^2 = (\sqrt{\kappa^2+4/\alpha^2}-\kappa)/2$, 
        $v^2 = (\sqrt{\kappa^2+4/\alpha^2}+\kappa)/2$
        \item two points $(0,z,Z,0)$ with $1=\alpha z Z>0$ and 
        \\ 
        $Z^2 = (\sqrt{\kappa^2+4/\alpha^2}-\kappa)/2$, 
        $z^2 = (\sqrt{\kappa^2+4/\alpha^2}+\kappa)/2$\,.
        \end{itemize}
        \item[$B$:] Four points  with all variables non zero, satisfying $vU=zZ=\rho >0$, where  $\rho=(\alpha+\sqrt{\beta\delta}+\gamma)^{-1}$ 
        \begin{itemize}
        \item \  
        $\kappa=v^2\left(1+\sqrt{\frac{\beta}{\delta}}\right) - U^2\left(1+\sqrt{\frac{\delta}{\beta}}\right)$ \qquad and 
        \item \ 
        $\kappa=z^2\left(1+\sqrt{\frac{\delta}{\beta}}\right)-Z^2\left(1+\sqrt{\frac{\beta}{\delta}}\right)$.
        \end{itemize}
    \end{enumerate}
     \label{thm:1DCriticalPoints}
\end{thm}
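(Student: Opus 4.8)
\medskip
\noindent\textbf{Proof plan.}
The plan is to eliminate the four variables in favour of the scalar products $p:=vU$ and $q:=zZ$, reduce $\dot S=0$ (restricted to the leaf $K_\kappa$) to a tiny system in $p,q$ together with the auxiliary squares $r^2:=v^2Z^2$ and $s^2:=z^2U^2$, and then run a short case split. First I would dispose of $O$ by substituting $(0,0,0,0)$ into \eqref{equ:gradflowL}: all four right-hand sides vanish, and $O$ meets $K_\kappa$ only for $\kappa=0$. For a critical point $S\neq O$, multiply $\dot U=0$ by $U$ and $\dot v=0$ by $v$ (and dually $\dot z=0$ by $z$, $\dot Z=0$ by $Z$); using \eqref{equ:gradflowL} this produces
\begin{align*}
p(1-\gamma q)&=\alpha p^2+\delta s^2=\alpha p^2+\beta r^2,\\
q(1-\gamma p)&=\alpha q^2+\delta s^2=\alpha q^2+\beta r^2,
\end{align*}
which in particular recovers $\beta r^2=\delta s^2$ (the first line of \eqref{equ:crit cond}); subtracting gives $p-q=\alpha(p^2-q^2)$, so either (i) $p=q$ or (ii) $p+q=1/\alpha$.

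In branch (ii) I would first argue $pq=0$. From the identity $r^2s^2=(pq)^2$ and $\beta r^2=\delta s^2$ one gets $\beta r^2=\sqrt{\beta\delta}\,|pq|$; adding the two displayed equations, the $1/\alpha$-terms cancel and one is left with $(\alpha-\gamma)pq=\sqrt{\beta\delta}\,|pq|$. Since $\gamma>\alpha$ this is impossible when $pq>0$, and when $pq<0$ it would force $\gamma-\alpha=\sqrt{\beta\delta}$, contradicting $(\gamma-\alpha)^2<\beta\delta$; hence $pq=0$. Using the duality $(U,z,Z,v)\mapsto(z,U,v,Z)$ I may take $q=zZ=0$, so $p=vU=1/\alpha\neq0$, hence $v,U\neq0$, and $r^2s^2=0$ together with $\beta r^2=\delta s^2$ forces $r=s=0$, i.e.\ $Z=z=0$. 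The constraint $\kappa=v^2-U^2$ together with $v^2U^2=1/\alpha^2$ then determines $v^2,U^2$ exactly as claimed; the sign of $v$ is free and that of $U$ is fixed by $vU=1/\alpha>0$, giving two points $(U,0,0,v)$ and dually two points $(0,z,Z,0)$: the four type-$A$ points.

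In branch (i), if $p=q=0$ then $\beta r^2=\delta s^2=0$, so every product $vU,zZ,vZ,zU$ vanishes; substituting back into \eqref{equ:gradflowL} (where, after the zero variables are eliminated, $\dot U=v$, $\dot Z=z$, etc.) forces $S=O$, whence $\kappa=0$. If $p=q\neq0$ then $r,s\neq0$ and all of $v,U,z,Z$ are nonzero; combining $\beta r^2=\delta s^2$ with $r^2s^2=p^4$ gives $r^2=\sqrt{\delta/\beta}\,p^2$, $s^2=\sqrt{\beta/\delta}\,p^2$, and the first displayed equation becomes $p=(\alpha+\sqrt{\beta\delta}+\gamma)p^2$, i.e.\ $vU=zZ=\rho$. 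The relations $v^2Z^2=r^2$, $z^2U^2=s^2$ then give $Z^2=\sqrt{\delta/\beta}\,U^2$ and $z^2=\sqrt{\beta/\delta}\,v^2$, which when substituted into $\kappa=v^2-U^2-Z^2+z^2$ yield the two stated formulas; for each $\kappa$ the pair $(v^2,U^2)$ is the unique positive solution of $v^2U^2=\rho^2$ and the $\kappa$-relation, and the signs of $v$ and of $z$ are independently free ($U,Z$ being fixed by $vU=zZ=\rho>0$), giving the four type-$B$ points. Both families would then be checked against \eqref{equ:gradflowL} directly, since the multiplications above are not reversible at a zero of a variable.

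The step I expect to be the real obstacle is the elimination in branch (ii) with $pq\neq0$: organizing the absolute-value bookkeeping so that $\beta r^2=\sqrt{\beta\delta}\,|pq|$ is exact, and then squeezing out the contradiction from $\gamma>\alpha$ together with the strict inequality $(\gamma-\alpha)^2<\beta\delta$. Everything else is symmetric bookkeeping in $p,q,\kappa$ plus the final substitution check.
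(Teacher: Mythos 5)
Your proposal is correct and follows essentially the same route as the paper: your four multiplied equations are exactly the paper's conditions $\beta v^2Z^2=\delta z^2U^2$ and $vU-zZ=\alpha(vU-zZ)(vU+zZ)$ (obtained there from $v\dot v-U\dot U=0$ and $v\dot v-Z\dot Z=0$), your dichotomy $p=q$ versus $\alpha(p+q)=1$ is the paper's cases $(b)/(a)$, and the elimination of the latter non-degenerate branch rests on the same inequality $(\gamma-\alpha)^2<\beta\delta$. The only differences are organizational --- you recover the type-$A$ points inside the $\alpha(p+q)=1$ branch via $pq=0$, whereas the paper first disposes of vanishing variables by a separate case analysis and turns its case $(a)$ into a pure contradiction --- and your closing caveat about re-verifying the candidate points in the original system (since multiplying $\dot U=0$ by $U$ is not reversible at a zero of $U$) is a well-placed precaution that the paper handles implicitly by keeping one unmultiplied equation in its system \eqref{equ:crit cond}.
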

\begin{rem}
We have $L(A)=-\frac{1}{2\alpha}$, which is minimal. Moreover $L(A)<L(B)=-\rho=-\frac{1}{\alpha+\sqrt{\beta \delta}+\gamma}<0=L(O)$
\end{rem}
\begin{proof}[Proof of Theorem~\ref{thm:1DCriticalPoints}] 
At critical points, the right-hand side of~\eqref{equ:gradflowL} equations equals zero. 

Clearly $O=(0,0,0,0)$ is a critical point, and $\kappa=0$. When three variables are equal to zero, so is the fourth one. 

When one variable is zero, then, by the first condition of \eqref{equ:crit cond}, at least another is also zero.
Suppose that, at a critical point, not all the variables are zero. 
If, say, $z=0$, then $Z=0$ or $v=0$.
If $z,v=0$, then $Z=0$ and then also $U=0$. 
The case $z,Z=0$ leads to $U(1-\alpha vU)=0$. If $U=0$, then again $v=0$, back at the point $O$. 

If $\alpha vU=1$, then, as $\kappa = v^2-U^2$, we can solve for $v^2,U^2$, and get $$U^2=(\sqrt{\kappa^2+4/\alpha^2}-\kappa)/2, \qquad v^2=(\sqrt{\kappa^2+4/\alpha^2}+\kappa)/2, $$ 
which gives two solutions for $v,U$ since they have the same sign.
Similar reasoning applies to all variables, and we get four possible critical points of type $A$ with exactly two variables equal to zero.

\medskip
Suppose now that at a critical point, no variable equals zero.

We combine the first ($\beta v^2Z^2 = \delta z^2U^2$) and the second ($vU- zZ =\alpha(vU-zZ)(vU+zZ)$) conditions to get an expression for $Z^2, z^2$ and $zZ$ in terms of $v, U$, and we have the following two cases:
\begin{itemize}
\item[($a$)] $vU- zZ \neq 0$, then, $1=\al(vU+zZ)$,  
\begin{align*}
\qquad& 0\not=\alpha z Z=1-\alpha v U
\quad\text{and}\quad \frac{\delta}{\beta}U^2\frac{z^2}{v^2}=Z^2=\frac{1}{\alpha^2z^2}(1-\alpha v U)^2 
\\
\text{so}\quad&z^2=\sqrt{\frac{\beta}{\delta}}\sqrt{\frac{v^2}{U^2}}\frac{1}{\alpha}\sqrt{(1-\alpha vU)^2},\quad
\end{align*}
\item[($b$)]$vU- zZ = 0$, thus,
\begin{align*}
& zZ=vU\quad\text{and}\quad \frac{Z^2}{U^2}=\frac{v^2}{z^2}=\frac{\delta}{\beta}\frac{U^2}{Z^2}, 
\\
\text{so}\quad&
Z^2 =U^2 \sqrt{\frac{\delta}{\beta}}\quad\text{and}\quad
z^2=v^2\sqrt{\frac{\beta}{\delta}}.
\end{align*}
\end{itemize}

\medskip
\noindent
The case $(a)$ applied to the fourth condition in~\eqref{equ:crit cond} leads to
\begin{align*}
     0&=U(1-\alpha v U)-\beta v \frac{1}{\alpha^2z^2}(1-\alpha vU)^2-\frac{\gamma}{\alpha} (1-\alpha vU)U,
     \\
     0&=U(1-\alpha v U)\frac{1}{\alpha}(\alpha-\frac{\beta v (1-\alpha v U)}{U\alpha z^2 } -\gamma)\\
     &=\frac{1}{\alpha}U(1-\alpha v U)(\alpha-\frac{\beta v(1-\alpha vU)}{U\alpha }\frac{\sqrt{\delta U^2}\alpha}{\sqrt{\beta v^2}\sqrt{(1-\alpha v U)^2}}-\gamma)\\
     &
    =U(\frac{1}{\alpha}- v U) (\alpha\mp\sqrt{\beta\delta} -\gamma)\,.
\end{align*}
As $U(1-\alpha v U)=\alpha UzZ\not=0$, we would have 
$\alpha \mp \sqrt{\beta\delta}-\gamma=0$, which cannot be satisfied for positive $N$ (in fact for $N>-5$). Therefore, condition $1=\alpha(zZ+vU)$ does not yield any critical points. 

In the case $(b)$, when $zZ=vU$ and also 
$\frac{Z^2}{U^2}=\frac{v^2}{z^2}=\sqrt{\frac{\delta}{\beta}}$, we have 
\begin{align*}
     \kappa&=v^2\left(1+\sqrt{\frac{\beta}{\delta}}\right) - U^2\left(1+\sqrt{\frac{\delta}{\beta}}\right)  
     \\
     0&=U\left(1-\alpha v U -\beta v U\sqrt{\frac{\delta}{\beta}} - \gamma vU\right)\quad\text{ or, as  }U\not=0
     \\
     1&=vU(\alpha+\sqrt{{\delta}{\beta}}+\gamma)=\frac{v U}{\rho}\,
\end{align*}
which can be converted into quadratic equation which has one solution for $v^2>0$ and provides $U^2$ as well.
Similarly $\kappa=-Z^2\left(1+\sqrt{\frac{\beta}{\delta}}\right) + z^2\left(1+\sqrt{\frac{\delta}{\beta}}\right)$ and $zZ=\rho$. As $v$ and $U$ have the same sign and $z$ and $Z$ have the same sign, we have exactly four critical points of type $B$. 

We note that for $\kappa=0$ we have $v=U$ and $z=Z$.  

The values of $L$ follow by simple calculations. 
 \end{proof}    

\begin{cor}
    For critical points of type $B$,\\ using $\rho$ and $\mu^2=\left(1+\sqrt{\frac{\delta}{\beta}}\right)\left(1+\sqrt{\frac{\beta}{\delta}}\right)=\frac{(\sqrt{\beta}+\sqrt{\delta})^2}{\sqrt{\beta\delta}}$, we have: 
\begin{align*}
    U^2&=\frac{\sqrt{\beta}}{2(\sqrt{\beta}+\sqrt{\delta})}(\sqrt{\kappa^2+4\rho^2\mu^2}-\kappa)
    \\
    z^2&=\frac{\sqrt{\beta}}{2(\sqrt{\beta}+\sqrt{\delta})}(\sqrt{\kappa^2+4\rho^2\mu^2}+\kappa)
    \\
    Z^2&=\frac{\sqrt{\delta}}{2(\sqrt{\beta}+\sqrt{\delta})}(\sqrt{\kappa^2+4\rho^2\mu^2}-\kappa)
    \\
        v^2&=\frac{\sqrt{\delta}}{2(\sqrt{\beta}+\sqrt{\delta})}(\sqrt{\kappa^2+4\rho^2\mu^2}+\kappa)
    \,.
\end{align*}
\end{cor}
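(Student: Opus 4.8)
The plan is to treat this purely as an algebraic consequence of the structure of type-$B$ critical points already extracted in the proof of Theorem~\ref{thm:1DCriticalPoints}. From that proof, at a type-$B$ point we have $vU=zZ=\rho$ together with $z^2=v^2\sqrt{\beta/\delta}$, $Z^2=U^2\sqrt{\delta/\beta}$, and $\kappa=v^2\bigl(1+\sqrt{\beta/\delta}\bigr)-U^2\bigl(1+\sqrt{\delta/\beta}\bigr)$. Abbreviating $a:=1+\sqrt{\beta/\delta}$ and $c:=1+\sqrt{\delta/\beta}$, the relation $vU=\rho$ gives $v^2U^2=\rho^2$ and the $\kappa$-relation reads $av^2-cU^2=\kappa$. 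The first step is a one-line expansion recording that $ac=2+(\beta+\delta)/\sqrt{\beta\delta}=(\sqrt{\beta}+\sqrt{\delta})^2/\sqrt{\beta\delta}=\mu^2$; this identity is exactly what makes the discriminant below collapse to $\kappa^2+4\rho^2\mu^2$.

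Next I set $x:=av^2$ and $y:=cU^2$. Then $x-y=\kappa$ while $xy=ac\,v^2U^2=\mu^2\rho^2$, so $x$ and $-y$ are the two roots of $t^2-\kappa t-\mu^2\rho^2=0$, namely $t=\tfrac12\bigl(\kappa\pm\sqrt{\kappa^2+4\mu^2\rho^2}\bigr)$. Since $v^2,U^2>0$ at a type-$B$ point and $a,c>0$, both $x$ and $y$ are strictly positive; because $\sqrt{\kappa^2+4\mu^2\rho^2}>|\kappa|$ this forces $x=\tfrac12\bigl(\kappa+\sqrt{\kappa^2+4\mu^2\rho^2}\bigr)$ and $-y=\tfrac12\bigl(\kappa-\sqrt{\kappa^2+4\mu^2\rho^2}\bigr)$, i.e. $y=\tfrac12\bigl(\sqrt{\kappa^2+4\mu^2\rho^2}-\kappa\bigr)$. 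Dividing back by $a$ and $c$, and using $1/a=\sqrt{\delta}/(\sqrt{\beta}+\sqrt{\delta})$ and $1/c=\sqrt{\beta}/(\sqrt{\beta}+\sqrt{\delta})$, yields $v^2=x/a$ and $U^2=y/c$ in precisely the claimed form.

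Finally, the formulas for $z^2$ and $Z^2$ follow by a direct multiplication: $z^2=v^2\sqrt{\beta/\delta}$ converts the $\sqrt{\delta}$ prefactor of $v^2$ into $\sqrt{\beta}$, and $Z^2=U^2\sqrt{\delta/\beta}$ converts the $\sqrt{\beta}$ prefactor of $U^2$ into $\sqrt{\delta}$, giving exactly the stated expressions. There is no substantive obstacle here; the only points requiring care are the sign selection for the square root (settled by the positivity of $x$ and $y$, hence of $v^2$ and $U^2$) and the bookkeeping of which of $\sqrt{\beta},\sqrt{\delta}$ lands in each numerator, both of which are purely mechanical once the substitution $x=av^2$, $y=cU^2$ is in place.
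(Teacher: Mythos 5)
Your proof is correct and follows essentially the same route as the paper: both reduce to the quadratic obtained from $vU=\rho$ together with $\kappa = av^2-cU^2$ and select the positive root by positivity of $v^2$ and $U^2$; your Vieta-style packaging with $x=av^2$, $y=cU^2$ is just a tidy reformulation of the paper's substitution $U=\rho/v$. The remaining formulas for $z^2$ and $Z^2$ follow by the same direct multiplications the paper summarizes as ``analogous calculations.''
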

\begin{proof}
Substituting $U=\rho/v$ and solving for $v^2$, we get  
\begin{align*}
   \kappa&=v^2\left(1+\sqrt{\frac{\beta}{\delta}}\right)-\frac{\rho^2}{v^2}\left(1+\sqrt{\frac{\delta}{\beta}}\right) \\
   v^2&=\frac{\sqrt{\delta}}{2(\sqrt{\beta}+\sqrt{\delta})}\left(\sqrt{\kappa^2+4\rho^2\mu^2}+\kappa\right)
\end{align*}
where we choose the positive root for $v^2$. Analogous calculations apply to the remaining variables. 
\end{proof}

\begin{rem}\label{rem:CritPointsSignature}
    The critical points can be symbolically labeled according to the signs of the variables. For type $A$ we have $(+0\,0+), (-0\,0-),(0++0), (0--0)$ and for type $B$: $(++++), (+--+), (-++-),(----)$  
\end{rem}
\subsection{Stability of the critical points}
The character of the critical points depends on the eigenvalues of the Hessian $H(S)=\frac{\partial \dot{S}}{\partial S}=- 
\frac{\partial^2 L}{\partial S^2}$ at these points,
\[
\small
-\left(
\begin{array}{cccc}
 \alpha  v^2 +\delta  z^2 & \gamma  v Z+2 \delta  U z & \gamma  v z & 2 \alpha  U v+\gamma  z Z-1 \\
 \gamma  v Z+2 \delta  U z & \delta  U^2+\alpha  Z^2 & \gamma  U v+2 \alpha  z Z-1 & \gamma  U Z \\
 \gamma  v z & \gamma  U v+2 \alpha  z Z-1 & \beta  v^2+\alpha  z^2 & \gamma  U z+2 \beta  v Z \\
 2 \alpha  U v+\gamma  z Z-1 & \gamma  U Z & \gamma  U z+2 \beta  v Z & \alpha  U^2+\beta  Z^2 \\
\end{array}
\right)
\]

At the points where $vU=zZ=\rho$, the vector ${\rm grad} (\kappa)=2 (-U, z, -Z, v)$ lies in the kernel of Hessian, in particular, it happens at all the critical points. This corresponds to the fact that each leaf of the foliation $K_\kappa$ is invariant. If the remaining eigenvalues are negative, the point is an attractor, if they are positive, it is a repellor, if the signs are different, then we have a saddle.

\begin{thm}
\label{thm:1DCharacter}
 At all critical points, the dynamic has a neutral direction corresponding to the invariant foliation $K_\kappa$. 
    The critical points of type $A$ are attractors in $K_\kappa$. The critical points of type $B$ are saddle points with two eigenvalues negative (their stable submanifolds are two dimensional) and one positive eigenvalue (corresponding to the repelling direction). For $\kappa=0$, the special critical point $(0,0,0,0)$ is a saddle.
\end{thm}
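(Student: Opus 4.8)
The plan is to read off the character of each critical point from the linearization of~\eqref{equ:gradflowL}, which at a critical point $S_0$ is the matrix $H=H(S_0)=-\partial^2 L/\partial S^2$ displayed above. Since $H$ is symmetric, all its eigenvalues are real, so $S_0$ is an attractor, a repeller, or a (possibly degenerate) saddle according to their signs. The first step is to isolate the neutral direction. Because $\kappa=-U^2+z^2-Z^2+v^2$ is a first integral of~\eqref{equ:gradflowL}, the function $\langle\nabla\kappa,\nabla L\rangle$ vanishes identically; differentiating this identity and evaluating where $\nabla L=0$ gives $(\partial^2 L/\partial S^2)\,\nabla\kappa=0$, i.e. $H(S_0)\,\nabla\kappa=0$ with $\nabla\kappa=2(-U,z,-Z,v)$. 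At the critical points of types $A$ and $B$ we have $\nabla\kappa\neq 0$, so $0$ is an eigenvalue of $H$ whose eigendirection is normal to the leaf $K_\kappa$; this is the asserted neutral direction, and since $H$ preserves the tangent space $(\nabla\kappa)^\perp=T_{S_0}K_\kappa$ (on which it is the linearization of the flow restricted to the invariant leaf), the behaviour inside $K_\kappa$ is decided by the remaining three eigenvalues.

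For type $A$ I would use the duality $(U,z,Z,v)\mapsto(z,U,v,Z)$ together with the sign symmetries (Remark~\ref{rem:CritPointsSignature}) to reduce to $S_0=(U,0,0,v)$ with $\alpha vU=1$. Setting $z=Z=0$ in the displayed matrix makes $H$ block diagonal in the pairs $(U,v)$ and $(z,Z)$:
\begin{align*}
H|_{(U,v)}=-\begin{pmatrix}\alpha v^2 & 1\\ 1 & \alpha U^2\end{pmatrix},\qquad
H|_{(z,Z)}=-\begin{pmatrix}\delta U^2 & \gamma/\alpha-1\\ \gamma/\alpha-1 & \beta v^2\end{pmatrix}.
\end{align*}
The first block has trace $-\alpha(U^2+v^2)<0$ and determinant $\alpha^2U^2v^2-1=(\alpha vU)^2-1=0$, hence eigenvalues $0$ (the neutral one) and $-\alpha(U^2+v^2)<0$. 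The second block has trace $-(\delta U^2+\beta v^2)<0$ and determinant $\bigl(\beta\delta-(\gamma-\alpha)^2\bigr)/\alpha^2>0$, the positivity being exactly the inequality $(\gamma-\alpha)^2<\beta\delta$ already recorded (it is what makes $R\ge 0$ in the Remark after~\eqref{equ:gradflowL}); therefore both of its eigenvalues are negative. Thus all three non-neutral eigenvalues of $H$ are strictly negative, so $A$ is a strict local minimum of $L|_{K_\kappa}$ and an attractor for the flow on $K_\kappa$; the four type-$A$ points behave identically by the symmetries.

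The points of type $B$ are the main obstacle. Here $vU=zZ=\rho$ with $\rho(\alpha+\gamma+\sqrt{\beta\delta})=1$, so the two off-diagonal entries $2\alpha Uv+\gamma zZ-1$ and $\gamma Uv+2\alpha zZ-1$ both collapse to $(\alpha-\sqrt{\beta\delta})\rho$, and moreover $Z^2=U^2\sqrt{\delta/\beta}$, $z^2=v^2\sqrt{\beta/\delta}$. Since $\det H=0$, the characteristic polynomial of $H$ factors as $\nu\bigl(\nu^3-(\tr H)\nu^2+s_2\nu-s_3\bigr)$, where $s_3$ is the sum of the four $3\times 3$ principal minors of $H$ and equals the product of the three non-neutral eigenvalues. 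One reads off $\tr H=-(\alpha+\beta)(v^2+Z^2)-(\alpha+\delta)(z^2+U^2)<0$, so these three eigenvalues cannot all be positive; hence it suffices to prove $s_3>0$, for then an even number of them is negative, not zero, i.e. exactly two are negative and one positive — an index-one saddle with a two-dimensional stable manifold inside $K_\kappa$ and one repelling direction. The remaining work is to verify $s_3>0$: I would substitute the relations above (so that $U^2,v^2$ become fixed multiples of $\rho$ times $\sqrt{\beta/\delta}$ or $\sqrt{\delta/\beta}$) into the four $3\times 3$ principal minors, add them, and check the resulting polynomial inequality using $\alpha>1$, $\beta>3$, $\gamma>2$, $\delta>1$, $2\gamma<\beta+\delta$ and $(\gamma-\alpha)^2<\beta\delta$; equivalently, one may restrict $L$ to $K_\kappa$ in the torus coordinates $v=\sqrt{\mu}\cos\phi$, $z=\sqrt{\mu}\sin\phi$, $U=\sqrt{\rho'}\cos\psi$, $Z=\sqrt{\rho'}\sin\psi$ with $\mu=\rho'+\kappa$ and compute the sign of the $3\times 3$ Hessian determinant there. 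This bookkeeping is the only genuinely laborious part of the argument.

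Finally, for $\kappa=0$ and $S_0=O=(0,0,0,0)$ I would evaluate the displayed matrix at the origin: every diagonal entry and every $\alpha$-, $\beta$-, $\gamma$-weighted off-diagonal entry vanishes, leaving $H(O)$ equal to the involution that swaps $U\leftrightarrow v$ and $z\leftrightarrow Z$. This again splits into the $(U,v)$- and $(z,Z)$-blocks $\bigl(\begin{smallmatrix}0&1\\1&0\end{smallmatrix}\bigr)$, each with eigenvalues $\pm 1$, so $H(O)$ has eigenvalues $+1,+1,-1,-1$; hence $O$ is a hyperbolic saddle with two-dimensional stable and two-dimensional unstable manifolds. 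There is no neutral direction here, consistent with $\nabla\kappa$ vanishing at $O$ and with $K_0$ being the singular cone through the origin.
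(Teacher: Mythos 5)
Your proposal follows essentially the same route as the paper: the kernel vector $\nabla\kappa$ accounts for the neutral eigenvalue, the Hessian at type~$A$ splits into the $(U,v)$ and $(z,Z)$ blocks whose traces and determinants give one zero and three negative eigenvalues, the type~$B$ points are handled via the sign of the trace together with the sum of the $3\times 3$ principal minors (equal to the product of the three non-neutral eigenvalues), and $O$ is diagonalized explicitly with spectrum $(1,1,-1,-1)$. Two remarks. First, your derivation of $H\nabla\kappa=0$ by differentiating the identity $\langle\nabla\kappa,\nabla L\rangle\equiv 0$ is cleaner than the paper, which merely asserts that $\mathrm{grad}(\kappa)$ lies in the kernel; this is a genuine (small) improvement. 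Second, the one step you do not execute is the decisive one for type~$B$: the verification that the sum of the rank-3 principal minors is positive. This is the only place in the whole theorem where the specific values $\alpha=\frac{N+2}{N}$, $\beta=\frac{3N+6}{N}$, $\gamma=\frac{2N+7}{N}$, $\delta=\frac{N+8}{N}+\frac{15}{N^2}$ actually matter (the inequality $(\gamma-\alpha)^2<\beta\delta$ alone does not obviously suffice), so deferring it as ``bookkeeping'' leaves the saddle classification unproved. The paper completes it by computing one minor explicitly, obtaining $\frac{\sqrt{\beta\delta}\,v^2\left(-4\alpha^2+3\alpha(\delta-\sqrt{\beta\delta})+4(\sqrt{\beta\delta}+\gamma)^2\right)}{(\alpha+\sqrt{\beta\delta}+\gamma)^2}$, checking positivity via monotonicity in $N$ and the limiting values $\alpha=1,\beta=3,\gamma=2,\delta=1$, and invoking the symmetry of the equations for the other three minors (each minor is in fact positive individually, which is stronger than the positivity of the sum that you need). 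Everything else in your argument is correct and matches the paper.
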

We recall that at points $A$ the value of $L$ is minimal with the value~$-\frac{1}{2\alpha}$.
\begin{proof}\ 
\hskip-2cm
\begin{itemize}
\item[Point $O$:]
At the special point $(0,0,0,0)$ the Hessian $H$ becomes: 
\[
H(0,0,0,0)=\left(\begin{array}{cccc}
  0&0&0&1\\
  0&0&1&0\\
  0&1&0&0\\
  1&0&1&0
\end{array}
\right)
\]
with the eigenvalues $(1,1,-1,-1)$ and the orthogonal system of eigenvectors: 
\[
\left(\begin{array}{cccc}
  1&0&1&0\\
  0&1&0&1\\
  0&1&0&-1\\
  1&0&-1&0
\end{array}
\right)\,.
\]
We note that the point $O$ is singular in the manifold $K_0$.

For the remaining critical pots of both types we have:
\item[Type $A$:]
At the points $(U,0,0,v)$ (resp. $(0,z,Z,0)$) with $\alpha v U=1$ ($\alpha zZ=1$) and $\kappa= v^2-U^2$ ($\kappa=z^2-Z^2$), $H$ becomes:
\begin{equation*}\hspace*{-1cm}
\left(\begin{array}{cccc}
  -\alpha v^2&0&0&1-2\alpha U v\\
  0&-\delta U^2&1-\gamma U v&0\\
  0&1-\gamma U v&-\beta v^2&0\\
  1-2\alpha U v&0&0&-\alpha U^2
\end{array}
\right)=
\left(\begin{array}{cccc}
  -\alpha v^2&0&0&-1\\
  0&-\frac{\delta}{\alpha^2 v^2}&1-\frac{\gamma}{\alpha}&0\\
  0&1-\frac{\gamma}{\alpha}&-\beta v^2&0\\
  -1&0&0&-\frac{1}{\alpha v^2}
\end{array}
\right).
\end{equation*}
The external block provides the eigenvalues 0 and $-\left(\frac{1}{\alpha v^2}+\alpha v^2\right)\le -2$. The determinant of the internal block is independent of the value of $v$, it is positive for $N\ge 0$:
\[
\frac{\beta\delta-(\gamma-\alpha)^2}{\alpha^2}>
\frac{(3(N+2)(N+8)-(N+5)^2}{(N+2)^2}=\frac{2N^2+20N+23}{(N+2)^2}>0\,.
\] 
The trace is negative, so that both eigenvalues are negative.
The eigendirection for the zero eigenvalue is given by $(v,- \frac{1}{\alpha v},0,0)$. 
Clearly, the same holds for the critical points $(0,z,Z,0)$.
In conclusion, all the critical points of type $A$ are attracting on their invariant manifold.
\item[Type $B$:]
For the points with no zero variables the trace of $H=H(U,z,Z,v)$ is negative. As it is equal to the sum of the eigenvalues, at least one of them is negative. Consider the sum of the principal minors of rank 3. It corresponds to the sum of the four products of three different eigenvalues, of which only one product is nonzero. The calculation of the minors is elementary but tedious. After using conditions
$ zZ=vU=\rho=\frac{1}{\alpha+\sqrt{\beta\delta}+\gamma}$ and $\frac{Z^2}{U^2}=\frac{v^2}{z^2}=\frac{\delta}{\beta}\frac{U^2}{Z^2}$ the first principal minor (which excludes the fourth row and column) reads
\[
\frac{\sqrt{\beta } \sqrt{\delta } v^2 \left(-4 \alpha ^2+\alpha  \left(3 \delta -3 \sqrt{\beta } \sqrt{\delta }\right)+4 \left(\sqrt{\beta } \sqrt{\delta }+\gamma \right)^2\right)}{\left(\alpha +\sqrt{\beta } \sqrt{\delta }+\gamma \right)^2}\,,
\]
where the expression $-4 \alpha ^2+\alpha  (3 \delta -3 \sqrt{\beta } \sqrt{\delta })+4 (\sqrt{\beta } \sqrt{\delta }+\gamma )^2$ is decreasing in $N$, (which defines the parameters) and positive for the limiting values $\alpha =1, \beta=3, \delta=1,\gamma=2$. That proves that this minor is positive. 

By symmetry of the equations, the remaining three minors behave in an analogous way and are also positive. Thus, the product of three nonzero eigenvalues is positive, and hence two are negative and one is positive.  
In conclusion, all four critical points of type $B$ are hyperbolic saddles on each invariant leaf $K_\kappa$ with two-dimensional stable and one-dimensional unstable directions. 
\end{itemize}

\end{proof}
\begin{figure}[h]
    \centering
    \includegraphics[width=0.27\linewidth]{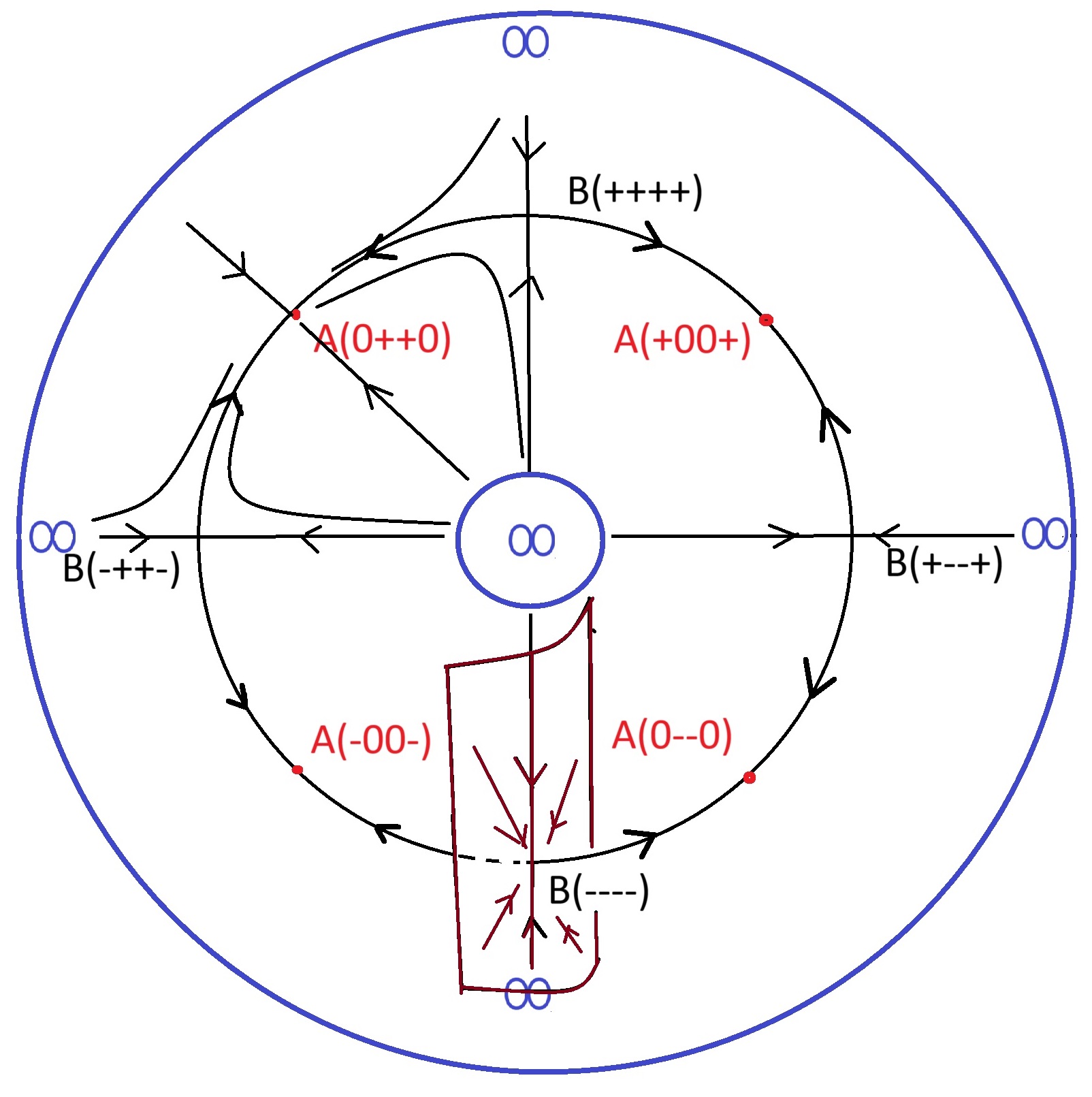}
    \includegraphics[width=0.47\linewidth]{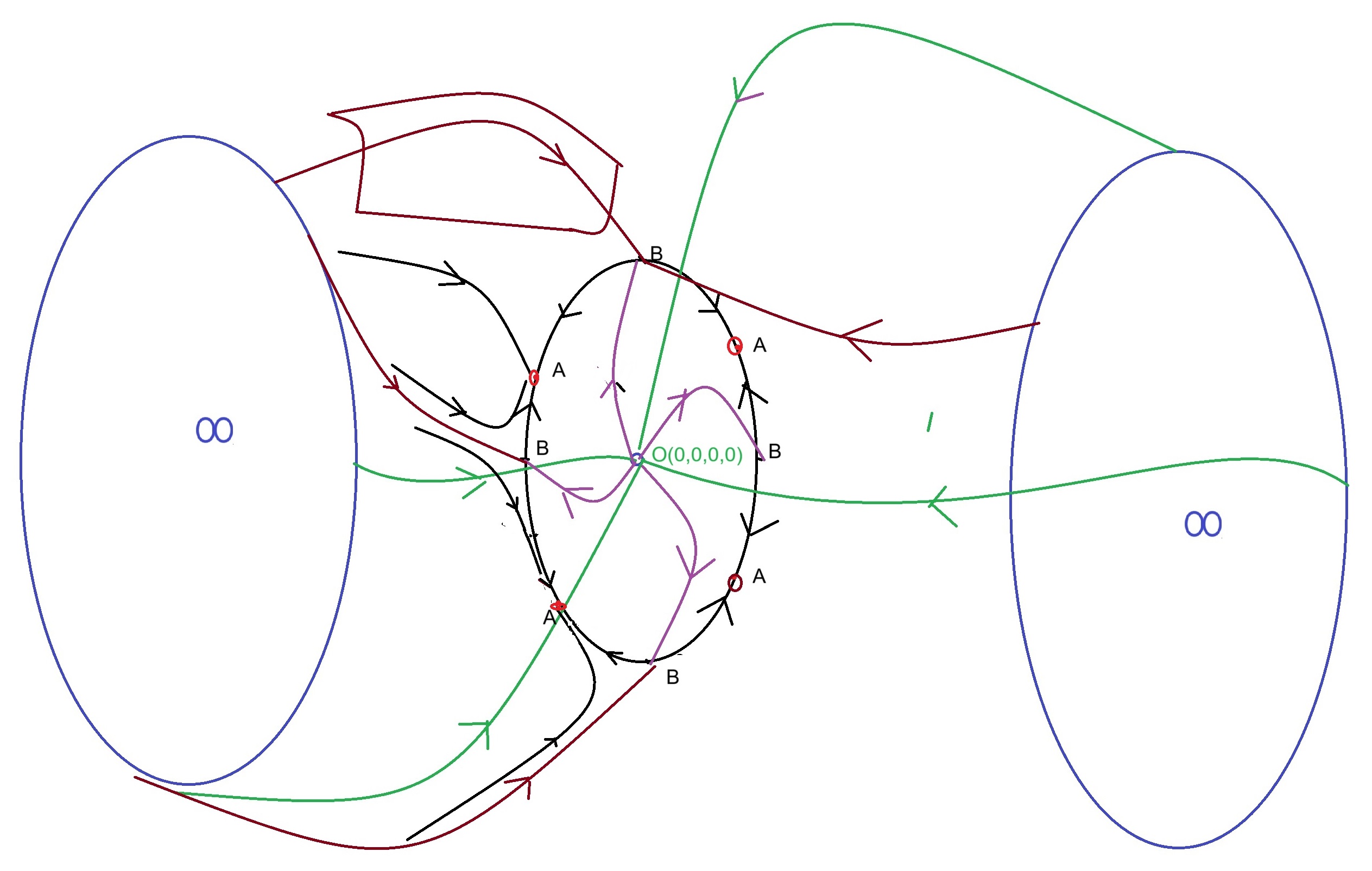}   
    \caption{Some trajectories when $\kappa\not=0$ and when $\kappa=0$.}
    \label{fig:UzZv}
\end{figure}
\begin{rem}
    The relative positions of the points and manifolds are as follows. We use the convention from Remark~\ref{rem:CritPointsSignature}
    On each 3-dimensional leaf $K_\kappa$, the unstable manifold (curve) of the saddle point $B_\kappa(++++)$ joins on one side the attractor $A(+00+)$ and on the other side the attractor $A_\kappa(0++0)$. The two-dimensional (relative to $K_\kappa)$  stable manifold of $B_\kappa(++++)$ separates the bassins of attraction of $A_\kappa(+00+)$ and $A_\kappa(0++0)$. Similarly, the stable manifold of $B_\kappa(+--+)$ separates the bassins of attraction of $A_\kappa(+00+)$ and $A_\kappa(0--0)$. The stable manifolds of points $B_\kappa$ do not intersect due to the hyperboloid topology of $K_\kappa$. The case $\kappa=0$ is exceptional. The unstable manifolds (curves) of $O=(0,0,0,0)$ join four points $B$, and the stable manifolds of $O$ are coming from infinity, they are intersections of two-dimensional stable manifolds of two points $B$ with a pair of two equal signs, for example $B(++++)$ and $B(+--+)$, if all signs are different there is no intersection, for example for $B(+--+)$ and $B(-++-)$. All four stable manifolds of $B$'s meet in the sigular point $O$.  The trajectories close to these stable curves of $O$ first go close to $O$, then turn and go close to a $B$ point along with its stable manifold, and then turn again and go to a point $A$.   
\end{rem}

\begin{figure}[h]
    \centering
\includegraphics[width=0.37\linewidth]{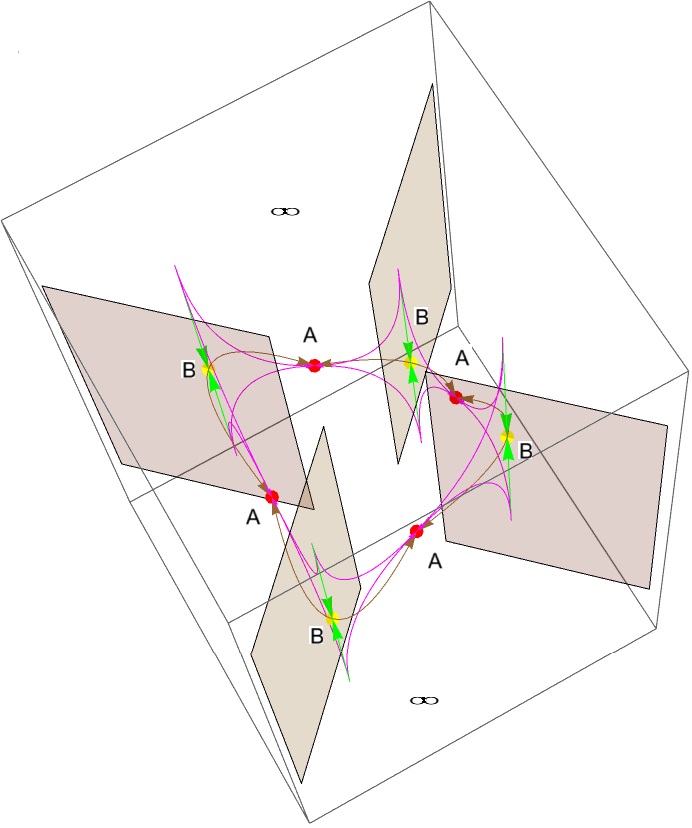}
\includegraphics[width=0.33\linewidth]{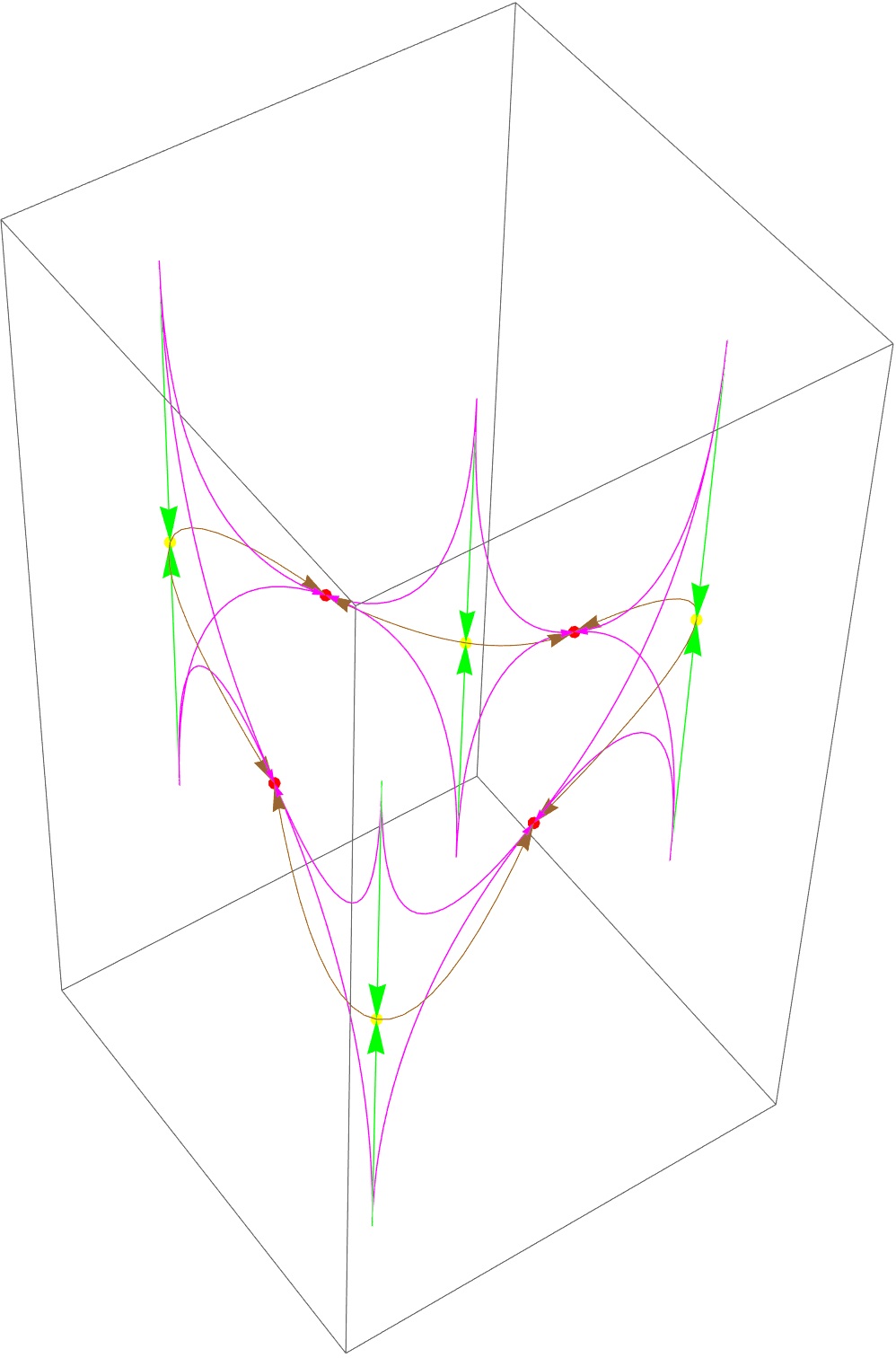}
    \caption{
    A 3D view on the trajectories $\kappa\not=0$. The 2-dimensional surfaces, the stable manifolds of points $B$ do not intersect and delineate
    the basins of attraction of points $A$.}
    \label{fig:UzZv1}
    
\end{figure}

    \begin{rem}
        At every point $S=(U,z,Z,v)$ of the invariant manifold $K_\kappa$, there is a two-dimensional affine manifold $L_\kappa(S) \subset K_\kappa$. 
        \[
        L_\kappa(S)=\{S+t A_1+s A_2| t,s\in\Real\}\,,
        \]
        where vectors $A_i$, $i=1,2$,  have coefficients $(a_i,b_i,c_i,d_i)$ that satisfy $a_iU-b_iz+c_iZ-d_iv=0$ and $a_i^2-b_i^2+c_i^2-d_i^2=0$, so that the two lines $S+t A_1$ and $S+sA_2$ lie on $K_\kappa$. In addition, $a_1a_2-b_1b_2+c_1c_2-d_1d_2=0$ for $L_\kappa(S)$ to lie entirely in $K_\kappa$. For example, with arbitrary $\sigma_i, \tau_i\in\{-1,1\}$ we can choose $a_i=\sigma_i(z-\tau_i Z)$, $b_i=v+\sigma_i U$, $c_i=\tau_i(v+\sigma_i U)$ and $d_i=-(z-\tau_i Z)$, keeping in mind that $A_i$ needs to be linearly independent, for instance $a_1=a_2$, $b_1=b_2$ but $c_1=-c_2$ and $d_1=-d_2$.  
    \end{rem}
    \begin{rem}
        When $\kappa\neq 0$, then "inside" $K_\kappa$ there is a three-dimensional hole. For example, if $\kappa>0$, we have $z^2+v^2\ge \kappa$ and the solid open $4D$ cylinder $C_\kappa=\{S: z^2+v^2<\kappa, Z,U\in\Real\}$ is disjoint from $K_\kappa$, however, for any $\epsilon>0$ the cylinder $C_{\kappa+\epsilon}$ intersects $K_\kappa$ (in a bounded set, as the hole "grows" when $z^2+Z^2$ increases). The closure of $C_\kappa$ touches the boundary of $K_\kappa$ at the circle $\{z^2+v^2=\kappa, U=0=Z\}$.  When $\kappa=0$ the two cylinders for positive and negative neighboring $\kappa$'s collapse to the lines that intersect at the singular boundary point $O=(0,0,0,0)$. 
    \end{rem}
\section{Conclusions}
\label{sec:conclusions}
The gradient flow relative to the ICL training dynamics exhibits a highly involved behavior. There are no unique minima of the cost function, and the descent can slow down at the intermediate saddle points, which are plentiful forming a manifold of lower dimension. Moreover, the minima depend strongly on the starting points as the trajectories stay on the invariant manifolds, which laminate the space into separate regimes of behavior. Our two simplified cases, one with $Z=0=z$ and one with $d=1$ already show the need for intensive calculations and quite an insight to understand the qualitative character of convergence and can answer only locally, close to the minima, the question about the rate of convergence. Meanwhile, while the saddles should not matter too much practically since stochastic gradient descent algorithms (a category in which the practical implementations of ICL certainly fall) avoid them almost surely, see, e.g.~\cite{liu2024almost}, their existence made a global universal estimate of such a rate prohibitively difficult.

\appendix
\section{Calculations of the Derivatives}
\subsection{Derivative with respect to $v$}
\label{sec:Dv}

Recall that,
\begin{align*}
\frac{\partial L}{\partial v}= &\ex\left[\frac{1}{N}\sum_{n=1}^N\sum_{i=1}^d \sum_{j=1}^d\sum_{k=1}^d(z_i  U_{jk}+z_iZ_k w_j+vw_iU_{jk}+vZ_kw_iw_j)x^{(i)}_n x_n^{(j)}x^{(k)}_q \right.
\\ &\left. +\frac{1}{N}\sum_{i=1}^d\sum_{j=1}^d \sum_{k=1}^d (z_iU_{jk}) x_q^{(j)}x_q^{(i)}  x_q^{(k)}-\sum_{m=1}^d w_mx_q^{(m)}\right]\\  &\times \left(\frac{1}{N}\sum_{n=1}^N\sum_{i=1}^d \sum_{j=1}^d\sum_{k=1}^d(w_iU_{jk}+Z_kw_iw_j)x^{(i)}_n x_n^{(j)}x^{(k)}_q\right).
\end{align*}
For the five terms that involves $\frac{1}{N}\sum_{n=1}^N\sum_{i=1}^d \sum_{j=1}^d\sum_{k=1}^dw_iU_{jk}x^{(i)}_n x_n^{(j)}x^{(k)}_q$, we have,
\begin{align}
&\ex\left(\sum_{n=1}^N\sum_{i=1}^d \sum_{j=1}^d\sum_{k=1}^dz_i  U_{jk}x^{(i)}_n x_n^{(j)}x^{(k)}_q \right)\left(\sum_{n=1}^N\sum_{i=1}^d \sum_{j=1}^d\sum_{k=1}^d(w_iU_{jk})x^{(i)}_n x_n^{(j)}x^{(k)}_q\right)=0 \label{eqn:AA}
\\&\ex\left(\sum_{n=1}^N\sum_{i=1}^d \sum_{j=1}^d\sum_{k=1}^dz_iZ_k w_jx^{(i)}_n x_n^{(j)}x^{(k)}_q \right)\left(\sum_{n=1}^N\sum_{i=1}^d \sum_{j=1}^d\sum_{k=1}^d(w_iU_{jk})x^{(i)}_n x_n^{(j)}x^{(k)}_q\right)\nonumber \\ =&N(z^\top \Lam U\Lam Z)\tr(\Lam)+ N(N+1)z^\top\Lam^2 U\Lam Z. \label{eqn:AB}\\
&\ex\left(\sum_{n=1}^N\sum_{i=1}^d \sum_{j=1}^d\sum_{k=1}^d(vw_iU_{jk}) w_jx^{(i)}_n x_n^{(j)}x^{(k)}_q \right)\left(\sum_{n=1}^N\sum_{i=1}^d \sum_{j=1}^d\sum_{k=1}^d(w_iU_{jk})x^{(i)}_n x_n^{(j)}x^{(k)}_q\right)\nonumber\\ =&Nv\tr (U\Lam U^\top \Lam )\tr(\Lam) + N(N+1) v \tr(U\Lam U^\top \Lam^2).\label{eqn:AC}
\\&\ex\left(\sum_{n=1}^N\sum_{i=1}^d \sum_{j=1}^d\sum_{k=1}^d(vZ_kw_iw_j) w_jx^{(i)}_n x_n^{(j)}x^{(k)}_q \right)\left(\sum_{n=1}^N\sum_{i=1}^d \sum_{j=1}^d\sum_{k=1}^d(w_iU_{jk})x^{(i)}_n x_n^{(j)}x^{(k)}_q\right) =0, \label{eqn:AD}
\\&\ex\left(\sum_{i=1}^d \sum_{j=1}^d\sum_{k=1}^d(z_iU_{jk}x^{(i)}_q x_q^{(j)}x^{(k)}_q \right)\left(\sum_{n=1}^N\sum_{i=1}^d \sum_{j=1}^d\sum_{k=1}^d(w_iU_{jk})x^{(i)}_n x_n^{(j)}x^{(k)}_q\right) =0, \label{eqn:AE}
\end{align}

The basic approach for evaluating other terms in the derivative calculations are the following. First, if there are odd number of appearances of $w$ in a term, then by symmetric, the term is zero. Therefore, we can immediately see that 
Equations~\eqref{eqn:AA} and~\eqref{eqn:AE} hold. Similarly, for equation~\eqref{eqn:AD}, we have,
\begin{align*}
&\ex\left(\sum_{n=1}^N\sum_{i=1}^d \sum_{j=1}^d\sum_{k=1}^d(vZ_kw_iw_j) w_jx^{(i)}_n x_n^{(j)}x^{(k)}_q \right)\left(\sum_{n=1}^N\sum_{i=1}^d \sum_{j=1}^d\sum_{k=1}^d(w_iU_{jk})x^{(i)}_n x_n^{(j)}x^{(k)}_q\right)\\ =&\ex\left(\sum_{n=1}^N\sum_{i=1}^d \sum_{j=1}^d\sum_{k=1}^d\sum_{n'=1}^N\sum_{i'=1}^d \sum_{j'=1}^d\sum_{k'=1}^d(vZ_kw_iw_j) w_jx^{(i)}_n x_n^{(j)}x^{(k)}_q (w_{i'}U_{j'k'})x^{(i')}_{n'} x_{n'}^{(j')}x^{(k')}_q\right)\\ =&0.
\end{align*}
If there are two appearance  $\ex[w_iw_j]=\delta_{ij}$, with $\delta_{ij}$ being the Kronecker symbol, can help us remove them; if there are four of them, then we will apply Isserlis' theorem, see, e.g.~\cite{isserlis1918formula} on higher moments of multivariate normal variables for simplification. After that, we will have a summation of a product of six $x_n$ or $x_q$ terms. There are only three possibilities:  (I) four $x_n$ terms and two $x_q$; (II) two $x_n$ terms and four $x_q$; (III) six $x_q$ terms. In cases (I), first take expectation with respect to $x_q$, then for the four $x_n$ terms, apply Isserlis' theorem for those that have common index $n$, otherwise directly apply expectation. Similar approach can be applied for case (II).  Apply Isserlis' theorem for the sixth moment in case (III). 

\subsubsection*{Derivation of equation \eqref{eqn:AB}}
\begin{align*}
&\ex\left(\sum_{n=1}^N\sum_{i=1}^d \sum_{j=1}^d\sum_{k=1}^dz_iZ_k w_jx^{(i)}_n x_n^{(j)}x^{(k)}_q \right)\left(\sum_{n=1}^N\sum_{i=1}^d \sum_{j=1}^d\sum_{k=1}^d(w_iU_{jk})x^{(i)}_n x_n^{(j)}x^{(k)}_q\right)
\\ \stackrel{(a)}{=}&
\ex\left(\sum_{n=1}^N\sum_{i=1}^d \sum_{j=1}^d\sum_{k=1}^d\sum_{n'=1}^N\sum_{i'=1}^d \sum_{j'=1}^d\sum_{k'=1}^dz_iZ_k w_jx^{(i)}_n x_n^{(j)}x^{(k)}_q (w_{i'}U_{j'k'})x^{(i')}_{n'} x_{n'}^{(j')}x^{(k')}_q\right)
\\ \stackrel{(b)}{=}&
\ex\left(\sum_{n=1}^N\sum_{i=1}^d \sum_{j=1}^d\sum_{k=1}^d\sum_{n'=1}^N\sum_{j'=1}^d\sum_{k'=1}^dz_iZ_k x^{(i)}_n x_n^{(j)}x^{(k)}_q U_{j'k'}x^{(j)}_{n'} x_{n'}^{(j')}x^{(k')}_q\right)
\\ \stackrel{(c)}{=}&
\ex\left(\sum_{n=1}^N\sum_{i=1}^d \sum_{j=1}^d\sum_{k=1}^d\sum_{n'=1}^N\sum_{j'=1}^d\sum_{k'=1}^dz_iZ_k U_{j'k'}x^{(i)}_n x_n^{(j)}x^{(j)}_{n'} x_{n'}^{(j')}\Lam_{kk'}\right)
\\ \stackrel{(d)}{=}&
\ex\left(\sum_{n=1}^N\sum_{i=1}^d \sum_{j=1}^d\sum_{k=1}^d\sum_{j'=1}^d\sum_{k'=1}^dz_iZ_k U_{j'k'}x^{(i)}_n x_n^{(j)}x^{(j)}_{n} x_{n}^{(j')}\Lam_{kk'}\right)
\\ &+
\ex\left(\sum_{n=1}^N\sum_{i=1}^d \sum_{j=1}^d\sum_{k=1}^d\sum_{n'=1,n/\neq n}^N\sum_{j'=1}^d\sum_{k'=1}^dz_iZ_k U_{j'k'}x^{(i)}_n x_n^{(j)}x^{(j)}_{n'} x_{n'}^{(j')}\Lam_{kk'}\right)
\\\stackrel{(e)}{=}& N \sum_{i=1}^d \sum_{j=1}^d\sum_{k=1}^d\sum_{j'=1}^d\sum_{k'=1}^dz_iZ_k U_{j'k'} (2\Lam_{ij}\Lam_{jj'}+ \Lam_{ij'}\Lam_{jj})\Lam_{kk'} 
\\ &+ N(N-1) \sum_{i=1}^d \sum_{j=1}^d\sum_{k=1}^d\sum_{j'=1}^d\sum_{k'=1}^dz_iZ_k U_{j'k'}\Lam_{ij}\Lam_{jj'}\Lam_{kk'}
\\=& N \sum_{i=1}^d \sum_{j=1}^d\sum_{k=1}^d\sum_{j'=1}^d\sum_{k'=1}^dz_iZ_k U_{j'k'}  \Lam_{ij'}\Lam_{jj}\Lam_{kk'} 
\\ &+ N(N+1) \sum_{i=1}^d \sum_{j=1}^d\sum_{k=1}^d\sum_{j'=1}^d\sum_{k'=1}^dz_iZ_k U_{j'k'}\Lam_{ij}\Lam_{jj'}\Lam_{kk'}\\ =&N(z^\top \Lam U\Lam Z)\tr(\Lam)+ N(N+1)z^\top\Lam^2 U\Lam Z,
\end{align*}
where (a) is the result of combining all the summations together; (b) follows from taking expectations with respect to $w$ variables; (c) is the result of take expectations with respect to $x_q$; (d) results from separating independent terms; Isserlis' theorem for the fourth moments is applied in (e); and the last two steps are simplifications.

\subsubsection*{Derivation of equation \eqref{eqn:AC}}

Following the same procedure as above, we have, 
\begin{align*}
&\ex\left(\sum_{n=1}^N\sum_{i=1}^d \sum_{j=1}^d\sum_{k=1}^d(vw_iU_{jk}) x^{(i)}_n x_n^{(j)}x^{(k)}_q \right)\left(\sum_{n=1}^N\sum_{i=1}^d \sum_{j=1}^d\sum_{k=1}^d(w_iU_{jk})x^{(i)}_n x_n^{(j)}x^{(k)}_q\right)\\ =&\ex\left(\sum_{n=1}^N\sum_{i=1}^d \sum_{j=1}^d\sum_{k=1}^d(vw_iU_{jk}) x^{(i)}_n x_n^{(j)}x^{(k)}_q \sum_{n'=1}^N\sum_{i'=1}^d \sum_{j'=1}^d\sum_{k'=1}^d(w_{i'}U_{j'k'})x^{(i')}_{n'} x_{n'}^{(j')}x^{(k')}_q\right)\\
=&\ex\left(\sum_{n=1}^N\sum_{i=1}^d \sum_{j=1}^d\sum_{k=1}^d(vU_{jk}) x^{(i)}_n x_n^{(j)}x^{(k)}_q \sum_{n'=1}^N \sum_{j'=1}^d\sum_{k'=1}^d(U_{j'k'})x^{(i)}_{n'} x_{n'}^{(j')}x^{(k')}_q\right)
\\
=&\ex\left(\sum_{n=1}^N\sum_{i=1}^d \sum_{j=1}^d\sum_{k=1}^d\sum_{n'=1}^N \sum_{j'=1}^d\sum_{k'=1}^d(vU_{jk}U_{j'k'}) x^{(i)}_n x_n^{(j)} x^{(i)}_{n'} x_{n'}^{(j')}\Lam_{kk'}\right)
\\
=&\ex\left(\sum_{n=1}^N\sum_{i=1}^d \sum_{j=1}^d\sum_{k=1}^d\sum_{j'=1}^d\sum_{k'=1}^d(vU_{jk}U_{j'k'}) x^{(i)}_n x_n^{(j)} x^{(i)}_{n} x_{n}^{(j')}\Lam_{kk'}\right)
\\
&+\ex\left(\sum_{n=1}^N\sum_{i=1}^d \sum_{j=1}^d\sum_{k=1}^d\sum_{n'=1,n'\neq n}^N \sum_{j'=1}^d\sum_{k'=1}^d(vU_{jk}U_{j'k'}) x^{(i)}_n x_n^{(j)} x^{(i)}_{n'} x_{n'}^{(j')}\Lam_{kk'}\right)
\\=& N\sum_{i=1}^d \sum_{j=1}^d\sum_{k=1}^d\sum_{j'=1}^d\sum_{k'=1}^d(vU_{jk}U_{j'k'})(2\Lam_{ij}\Lam_{ij'} + \Lam_{ii}\Lam_{jj'})\Lam_{kk'} 
\\ &+ N(N-1)\sum_{i=1}^d \sum_{j=1}^d\sum_{k=1}^d\sum_{j'=1}^d\sum_{k'=1}^d(vU_{jk}U_{j'k'})\Lam_{ij}\Lam_{ij'} \Lam_{kk'}
\\=& N\sum_{i=1}^d \sum_{j=1}^d\sum_{k=1}^d\sum_{j'=1}^d\sum_{k'=1}^d(vU_{jk}U_{j'k'})\Lam_{ii}\Lam_{jj'}\Lam_{kk'} 
\\ &+ N(N+1)\sum_{i=1}^d \sum_{j=1}^d\sum_{k=1}^d\sum_{j'=1}^d\sum_{k'=1}^d(vU_{jk}U_{j'k'})\Lam_{ij}\Lam_{ij'} \Lam_{kk'}
\\ =&Nv\tr (U\Lam U^\top \Lam )\tr(\Lam) + N(N+1) v \tr(U\Lam U^\top \Lam^2).
\end{align*}

Next, for the five terms that involves $\left(\sum_{n=1}^N\sum_{i=1}^d \sum_{j=1}^d\sum_{k=1}^d(Z_kw_iw_j)x^{(i)}_n x_n^{(j)}x^{(k)}_q\right)$, first, we have,

\begin{align*}
&\ex\left(\sum_{n=1}^N\sum_{i=1}^d \sum_{j=1}^d\sum_{k=1}^d(z_i  U_{jk}) x^{(i)}_n x_n^{(j)}x^{(k)}_q \right)\left(\sum_{n=1}^N\sum_{i=1}^d \sum_{j=1}^d\sum_{k=1}^d(Z_kw_iw_j)x^{(i)}_n x_n^{(j)}x^{(k)}_q\right)\\=&
\ex\left(\sum_{n=1}^N\sum_{i=1}^d \sum_{j=1}^d\sum_{k=1}^dz_i  U_{jk}x^{(i)}_n x_n^{(j)}x^{(k)}_q \right)\left(\sum_{n'=1}^N\sum_{i'=1}^d \sum_{j'=1}^d\sum_{k'=1}^d(Z_{k'}w_{i'}w_{j'})x^{(i')}_{n'} x_{n'}^{(j')}x^{(k')}_q\right)
\\=&
\ex\left(\sum_{n=1}^N\sum_{i=1}^d \sum_{j=1}^d\sum_{k=1}^d\sum_{n'=1}^N\sum_{i'=1}^d \sum_{j'=1}^d\sum_{k'=1}^dz_i  U_{jk}x^{(i)}_n x_n^{(j)}x^{(k)}_q (Z_{k'}w_{i'}w_{j'})x^{(i')}_{n'} x_{n'}^{(j')}x^{(k')}_q\right)
\\=&
\ex\left(\sum_{n=1}^N\sum_{i=1}^d \sum_{j=1}^d\sum_{k=1}^d\sum_{n'=1}^N\sum_{i'=1}^d \sum_{k'=1}^dz_i  U_{jk}Z_{k'}x^{(i)}_n x_n^{(j)}x^{(k)}_q x^{(i')}_{n'} x_{n'}^{(i')}x^{(k')}_q\right)
\\=&
\ex\left(\sum_{n=1}^N\sum_{i=1}^d \sum_{j=1}^d\sum_{k=1}^d\sum_{n'=1}^N\sum_{i'=1}^d\sum_{k'=1}^d z_i  U_{jk}Z_{k'}\Lam_{kk'}x^{(i)}_n x_n^{(j)} x^{(i')}_{n'} x_{n'}^{(i')}\right)
\\ =& \ex\left(\sum_{n=1}^N\sum_{i=1}^d \sum_{j=1}^d\sum_{k=1}^d\sum_{i'=1}^d\sum_{k'=1}^d z_i  U_{jk}Z_{k'}\Lam_{kk'}x^{(i)}_n x_n^{(j)} x^{(i')}_{n} x_{n}^{(i')}\right)+
\\ &\quad + \ex\left(\sum_{n=1}^N\sum_{i=1}^d \sum_{j=1}^d\sum_{k=1}^d\sum_{n'=1, n'\neq n}^N\sum_{i'=1}^d\sum_{k'=1}^d z_i  U_{jk}Z_{k'}\Lam_{kk'}x^{(i)}_n x_n^{(j)} x^{(i')}_{n'} x_{n'}^{(i')}\right)
\\ =&N \sum_{i=1}^d \sum_{j=1}^d\sum_{k=1}^d\sum_{i'=1}^d\sum_{k'=1}^d
z_i  U_{jk}Z_{k'}\Lam_{kk'}(\Lam_{ij}\Lam_{i'i'}+2\Lam_{ii'}\Lam_{ji'})+
\\ &\quad +N(N-1)\sum_{i=1}^d \sum_{j=1}^d\sum_{k=1}^d\sum_{i'=1}^d\sum_{k'=1}^d
z_i  U_{jk}Z_{k'}\Lam_{kk'}\Lam_{ij}\Lam_{i'i'}
\\ =&2N \sum_{i=1}^d \sum_{j=1}^d\sum_{k=1}^d\sum_{i'=1}^d\sum_{k'=1}^d
z_i  U_{jk}Z_{k'}\Lam_{kk'}\Lam_{ii'}\Lam_{ji'}
+N^2\sum_{i=1}^d \sum_{j=1}^d\sum_{k=1}^d\sum_{i'=1}^d\sum_{k'=1}^d
z_i  U_{jk}Z_{k'}\Lam_{kk'}\Lam_{ij}\Lam_{i'i'}
\\ =&2Nz^\top\Lam^2U\Lam Z+N^2z^\top \Lam U\Lam Z\tr(\Lam).
\end{align*}
Second, we have, 
\begin{align*}
&\ex\left(\sum_{n=1}^N\sum_{i=1}^d \sum_{j=1}^d\sum_{k=1}^d(z_iZ_k w_j) x^{(i)}_n x_n^{(j)}x^{(k)}_q \right)\left(\sum_{n=1}^N\sum_{i=1}^d \sum_{j=1}^d\sum_{k=1}^d(Z_kw_iw_j)x^{(i)}_n x_n^{(j)}x^{(k)}_q\right) =0.
\end{align*}
Third,
\begin{align*}
&\ex\left(\sum_{n=1}^N\sum_{i=1}^d \sum_{j=1}^d\sum_{k=1}^d(vw_iU_{jk}) x^{(i)}_n x_n^{(j)}x^{(k)}_q \right)\left(\sum_{n=1}^N\sum_{i=1}^d \sum_{j=1}^d\sum_{k=1}^d(Z_kw_iw_j)x^{(i)}_n x_n^{(j)}x^{(k)}_q\right) =0.
\end{align*}
These holds due to the symmetry of the $w$ variables. 
And the fourth, 
\begin{align*}
&\ex\left(\sum_{n=1}^N\sum_{i=1}^d \sum_{j=1}^d\sum_{k=1}^d(vZ_kw_iw_j) w_jx^{(i)}_n x_n^{(j)}x^{(k)}_q \right)\left(\sum_{n=1}^N\sum_{i=1}^d \sum_{j=1}^d\sum_{k=1}^d(Z_kw_iw_j)x^{(i)}_n x_n^{(j)}x^{(k)}_q\right) 
\\=&
\ex\left(\sum_{n=1}^N\sum_{i=1}^d \sum_{j=1}^d\sum_{k=1}^d\sum_{n'=1}^N\sum_{i'=1}^d \sum_{j'=1}^d\sum_{k'=1}^dvZ_kw_iw_jx^{(i)}_n x_n^{(j)}x^{(k)}_q (Z_{k'}w_{i'}w_{j'})x^{(i')}_{n'} x_{n'}^{(j')}x^{(k')}_q\right)
\\=&
\ex\left(\sum_{n=1}^N\sum_{i=1}^d \sum_{j=i}\sum_{k=1}^d\sum_{n'=1}^N\sum_{i'=1, i'\neq i}^d \sum_{j'=i'}\sum_{k'=1}^dvZ_kw_iw_jx^{(i)}_n x_n^{(j)}x^{(k)}_q (Z_{k'}w_{i'}w_{j'})x^{(i')}_{n'} x_{n'}^{(j')}x^{(k')}_q\right)
\\&+
\ex\left(\sum_{n=1}^N\sum_{i=1}^d \sum_{j=i}\sum_{k=1}^d\sum_{n'=1}^N\sum_{i'=i}\sum_{j'=i}\sum_{k'=1}^dvZ_kw_iw_jx^{(i)}_n x_n^{(j)}x^{(k)}_q (Z_{k'}w_{i'}w_{j'})x^{(i')}_{n'} x_{n'}^{(j')}x^{(k')}_q\right)
\\\stackrel{(a)}{=}&
\ex\left(\sum_{n=1}^N \sum_{i=1}^d \sum_{k=1}^d\sum_{n'=1}^N\sum_{i'=1, i'\neq i}^d \sum_{k'=1}^dvZ_kx^{(i)}_n x_n^{(i)}x^{(k)}_q Z_{k'}x^{(i')}_{n'} x_{n'}^{(i')}x^{(k')}_q\right)
\\&+
3\ex\left(\sum_{n=1}^N\sum_{i=1}^d \sum_{k=1}^d\sum_{n'=1}^N\sum_{k'=1}^dvZ_kx^{(i)}_n x_n^{(i)}x^{(k)}_q (Z_{k'})x^{(i)}_{n'} x_{n'}^{(i)}x^{(k')}_q\right)
\\=&
\underbrace{\ex\left(\sum_{n=1}^N \sum_{i=1}^d \sum_{k=1}^d\sum_{n'=1}^N\sum_{i'=1}^d \sum_{k'=1}^dvZ_kx^{(i)}_n x_n^{(i)}x^{(k)}_q Z_{k'}x^{(i')}_{n'} x_{n'}^{(i')}x^{(k')}_q\right)}_{J_1}
\\&+
\underbrace{2\ex\left(\sum_{n=1}^N\sum_{i=1}^d \sum_{k=1}^d\sum_{n'=1}^N\sum_{k'=1}^dvZ_kx^{(i)}_n x_n^{(i)}x^{(k)}_q (Z_{k'})x^{(i)}_{n'} x_{n'}^{(i)}x^{(k')}_q\right)}_{J_2}.
\end{align*}
Here, (a) results from a fourth moment calculation of $w$ variables, with the Isserlis' theorem applied.
\begin{align*}
J_1=&\ex\left(\sum_{n=1}^N \sum_{i=1}^d \sum_{k=1}^d\sum_{n'=1}^N\sum_{i'=1}^d \sum_{k'=1}^dvZ_kx^{(i)}_n x_n^{(i)}x^{(k)}_q Z_{k'}x^{(i')}_{n'} x_{n'}^{(i')}x^{(k')}_q\right)
\\=&\ex\left(\sum_{n=1}^N \sum_{i=1}^d \sum_{k=1}^d\sum_{i'=1}^d \sum_{k'=1}^dvZ_kx^{(i)}_n x_n^{(i)}x^{(k)}_q Z_{k'}x^{(i')}_{n} x_{n}^{(i')}x^{(k')}_q\right)
\\ &+\ex\left(\sum_{n=1}^N \sum_{i=1}^d \sum_{k=1}^d\sum_{n'=1, n'\neq n}^N\sum_{i'=1}^d \sum_{k'=1}^dvZ_kx^{(i)}_n x_n^{(i)}x^{(k)}_q Z_{k'}x^{(i')}_{n'} x_{n'}^{(i')}x^{(k')}_q\right)
\\=& N \sum_{i=1}^d \sum_{k=1}^d\sum_{i'=1}^d \sum_{k'=1}^dvZ_k \Lam_{kk'}(\Lam_{ii}\Lam_{i'i'} + 2\Lam_{ii'}\Lam_{ii'})+ \\ &\quad + N(N-1)\sum_{i=1}^d \sum_{k=1}^d\sum_{i'=1}^d \sum_{k'=1}^dvZ_k \Lam_{kk'}Z_{k'}\Lam_{ii}\Lam_{i'i'}
\\=& 2N \sum_{i=1}^d \sum_{k=1}^d\sum_{i'=1}^d \sum_{k'=1}^dvZ_k \Lam_{kk'} Z_{k'}\Lam_{ii'}\Lam_{ii'} + N^2\sum_{i=1}^d \sum_{k=1}^d\sum_{i'=1}^d \sum_{k'=1}^dvZ_k \Lam_{kk'}Z_{k'}\Lam_{ii}\Lam_{i'i'}
\\=& 2NvZ^\top \Lam Z \tr(\Lam^2)+ N^2 vZ^\top \Lam Z \tr^2(\Lam)
\end{align*}
\begin{align*}
J_2=&2\ex\left(\sum_{n=1}^N\sum_{i=1}^d \sum_{k=1}^d\sum_{n'=1}^N\sum_{k'=1}^dvZ_kx^{(i)}_n x_n^{(i)}x^{(k)}_q (Z_{k'})x^{(i)}_{n'} x_{n'}^{(i)}x^{(k')}_q\right)
\\
=&2\ex\left(\sum_{n=1}^N\sum_{i=1}^d \sum_{k=1}^d\sum_{k'=1}^dvZ_kx^{(i)}_n x_n^{(i)}x^{(k)}_q (Z_{k'})x^{(i)}_{n} x_{n}^{(i)}x^{(k')}_q\right)
\\&+
2\ex\left(\sum_{n=1}^N\sum_{i=1}^d \sum_{k=1}^d\sum_{n'=1, n'\neq n}^N\sum_{k'=1}^dvZ_kx^{(i)}_n x_n^{(i)}x^{(k)}_q (Z_{k'})x^{(i)}_{n'} x_{n'}^{(i)}x^{(k')}_q\right)
\\=&6N\ex\left(\sum_{i=1}^d \sum_{k=1}^d\sum_{k'=1}^dvZ_k(\Lam_{ii})^2x^{(k)}_q (Z_{k'})x^{(k')}_q\right)
\\&+
2N(N-1)\ex\left(\sum_{i=1}^d \sum_{k=1}^d\sum_{k'=1}^dvZ_k\Lam_{ii}x^{(k)}_q (Z_{k'})\Lam_{ii}x^{(k')}_q\right)
\\=&6N\tr(\Lam^2)vZ^\top \Lam Z+2N(N-1) \tr(\Lam^2) vZ^\top \Lam Z
\\=&2N(N+2)\tr(\Lam^2)vZ^\top \Lam Z.
\end{align*}
Therefore, $J_1+J_2=[2N(N+3)\tr(\Lam^2)+N^2\tr^2(\Lam)]vZ^\top \Lam Z$.

\begin{align*}
&\ex\left(\sum_{i=1}^d \sum_{j=1}^d\sum_{k=1}^dz_iU_{jk}x^{(i)}_q x_q^{(j)}x^{(k)}_q \right)\left(\sum_{n=1}^N\sum_{i=1}^d \sum_{j=1}^d\sum_{k=1}^d(Z_kw_iw_j)x^{(i)}_n x_n^{(j)}x^{(k)}_q\right)\\=&\ex\sum_{i=1}^d \sum_{j=1}^d\sum_{k=1}^d \sum_{n=1}^N\sum_{i'=1}^d \sum_{j'=1}^d\sum_{k'=1}^dz_iU_{jk}x^{(i)}_q x_q^{(j)}x^{(k)}_q(Z_{k'}w_{i'}w_{j'})x^{(i')}_n x_n^{(j')}x^{(k')}_q\\=&\ex\sum_{i=1}^d \sum_{j=1}^d\sum_{k=1}^d \sum_{n=1}^N\sum_{i'=1}^d \sum_{k'=1}^dz_iU_{jk}x^{(i)}_q x_q^{(j)}x^{(k)}_q(Z_{k'})x^{(i')}_n x_n^{(i')}x^{(k')}_q
\\=&N\tr(\Lam)\ex\sum_{i=1}^d \sum_{j=1}^d\sum_{k=1}^d \sum_{k'=1}^dz_iU_{jk}Z_{k'}x^{(i)}_q x_q^{(j)}x^{(k)}_qx^{(k')}_q
\\=&N\tr(\Lam)\sum_{i=1}^d \sum_{j=1}^d\sum_{k=1}^d \sum_{k'=1}^dz_iU_{jk}Z_{k'}(\Lam_{ij}\Lam_{kk'}+\Lam_{ik}\Lam_{jk'}+\Lam_{ik'}\Lam_{jk})\\=&
N\tr(\Lam)[z^\top\Lam(U+U^\top)\Lam Z + z^\top\Lam Z\tr(U\Lam)].
\end{align*}
The separated term,
\begin{align*}
&\ex\left[\sum_{m=1}^d w_mx_q^{(m)} \left(\sum_{n=1}^N\sum_{i=1}^d \sum_{j=1}^d\sum_{k=1}^d(w_iU_{jk}+Z_kw_iw_j)x^{(i)}_n x_n^{(j)}x^{(k)}_q\right)\right]\\ 
=&\ex\left[\sum_{m=1}^d  \sum_{n=1}^N\sum_{i=1}^d \sum_{j=1}^d\sum_{k=1}^dw_iw_mx_q^{(m)}U_{jk}x^{(i)}_n x_n^{(j)}x^{(k)}_q\right]+
\\&\quad +\ex\left[\sum_{m=1}^d \sum_{n=1}^N\sum_{i=1}^d \sum_{j=1}^d\sum_{k=1}^dw_mx_q^{(m)}Z_kw_iw_jx^{(i)}_n x_n^{(j)}x^{(k)}_q\right]\\ 
=&\ex\left[  \sum_{n=1}^N\sum_{i=1}^d \sum_{j=1}^d\sum_{k=1}^dU_{jk}x_q^{(i)}x^{(i)}_n x_n^{(j)}x^{(k)}_q\right]
\\=&N\sum_{i=1}^d \sum_{j=1}^d\sum_{k=1}^d\Lam_{ij}U_{jk}\Lam_{ki}=N\tr(\Lam U\Lam).
\end{align*}

We would like to point out that the rest of calculations follow the same principle we elaborated and very similar procedure that are demonstrated here. So, they are provided in the supplement materials. 

\section{Supplementary  Materials}

\subsection{Detailed Calculations for $\frac{\partial L}
{\partial z}$}
\label{sec:Dz}

Recall that, for any $\ell=1,2,\ldots, d$, 
\begin{align*}
\frac{\partial L}{\partial z_\ell}= &\ex\left[\frac{1}{N}\sum_{n=1}^N\sum_{i=1}^d \sum_{j=1}^d\sum_{k=1}^d(z_i  U_{jk}+z_iZ_k w_j+vw_iU_{jk}+vZ_kw_iw_j)x^{(i)}_n x_n^{(j)}x^{(k)}_q \right.\\ &\left. +\frac{1}{N}\sum_{i=1}^d\sum_{j=1}^d \sum_{k=1}^d (z_iU_{jk}) x_q^{(j)}x_q^{(i)}  x_q^{(k)}-\sum_{m=1}^d w_mx_q^{(m)}\right]\\  &\times \left(\frac{1}{N}\sum_{n=1}^N \sum_{j=1}^d\sum_{k=1}^d(U_{jk}+Z_kw_j)x^{(\ell)}_n x_n^{(j)}x^{(k)}_q +\frac{1}{N}\sum_{j=1}^d \sum_{k=1}^d U_{jk} x_q^{(j)}x_q^{(\ell)}  x_q^{(k)}\right).
\end{align*}
Calculations below gives us, 
\begin{align*}
\left(\frac{\partial L}{\partial z}\right)_\ell = & 
\frac{1}{N}(z^\top\Lam)_\ell\tr(U\Lam U^\top \Lam)+ \frac{N+2}{N} (z^\top\Lam U\Lam U^\top \Lam)_\ell \nonumber
\\ &+\frac{1}{N}(z^\top\Lam U \Lam)_\ell \tr(U\Lam)+ \frac{1}{N}(z^\top\Lam U \Lam U\Lam)_\ell \nonumber
\\&+\frac{1}{N} (z^\top \Lam)_\ell\tr(\Lam) (Z^\top\Lam Z)+\frac{N+1}{N}(z^\top \Lam^2)_\ell (Z^\top\Lam Z) \nonumber
\\&+\frac{2}{N} 
(vZ^\top\Lam U^\top \Lam)_\ell \tr(\Lam)+\frac{N+1}{N} (vZ^\top\Lam U^\top\Lam^2)_\ell \nonumber
\\ &+\frac{N+1}{N} ( vZ^\top\Lam U^\top\Lam^2)_\ell\nonumber
\\ &+
\frac{1}{N}(vZ^\top\Lam)_\ell \tr(\Lam) \tr (U\Lam)+\frac{2}{N}(vZ^\top\Lam U\Lam)_\ell \tr(\Lam)  \nonumber
\\&+\frac{1}{N}(z^\top \Lam U\Lam U^\top\Lam)_\ell+\frac{1}{N}(z^\top \Lam U^\top\Lam U^\top\Lam)_\ell+ \frac{1}{N}(z^T\Lam U^\top\Lam^\top)_\ell \tr(U\Lam) \nonumber
\\&+\frac{1}{N^2}[2(z^\top \Lam U\Lam)_\ell \tr(U\Lam) + 2(z^\top \Lam U \Lam U\Lam)_\ell+ 2(z^\top \Lam U \Lam U^\top \Lam)_\ell \nonumber
\\ 
&+ 2(z^\top \Lam U^\top \Lam)_\ell \tr(U\Lam) + 2(z^\top \Lam U^\top  \Lam U\Lam)_\ell+2(z^\top \Lam U^\top \Lam U^\top\Lam)_\ell \nonumber
\\ 
&+ (z^\top \Lam)_\ell \tr(U\Lam U\Lam ) +(z^\top \Lam)_\ell \tr(U\Lam U^\top \Lam ) +(z^\top \Lam)_\ell \tr(U\Lam)\tr(U\Lam)] \nonumber
\\ &-Z^\top\Lam^2.
\end{align*}

To see this, let us calculate each term. First,
\begin{align*}
&\ex\left(\sum_{n=1}^N\sum_{i=1}^d \sum_{j=1}^d\sum_{k=1}^dz_iU_{jk}x^{(i)}_n x_n^{(j)}x^{(k)}_q \right)\left(\sum_{n=1}^N\sum_{j=1}^d\sum_{k=1}^dU_{jk}x^{(\ell)}_n x_n^{(j)}x^{(k)}_q\right)\\=&\ex\left(\sum_{n=1}^N\sum_{i=1}^d \sum_{j=1}^d\sum_{k=1}^d\sum_{n'=1}^N\sum_{j'=1}^d\sum_{k'=1}^dz_iU_{jk}x^{(i)}_n x_n^{(j)}x^{(k)}_q U_{j'k'}x^{(\ell)}_{n'} x_{n'}^{(j')}x^{(k')}_q\right)
\\=&\ex\left(\sum_{n=1}^N\sum_{i=1}^d \sum_{j=1}^d\sum_{k=1}^d\sum_{n'=1}^N\sum_{j'=1}^d\sum_{k'=1}^dz_iU_{jk}x^{(i)}_n x_n^{(j)} U_{j'k'}x^{(\ell)}_{n'} x_{n'}^{(j')}\Lam_{kk'}\right)
\\=&\ex\left(\sum_{n=1}^N\sum_{i=1}^d \sum_{j=1}^d\sum_{k=1}^d\sum_{j'=1}^d\sum_{k'=1}^dz_iU_{jk}x^{(i)}_n x_n^{(j)} U_{j'k'}x^{(\ell)}_{n} x_{n}^{(j')}\Lam_{kk'}\right)
\\&+\ex\left(\sum_{n=1}^N\sum_{i=1}^d \sum_{j=1}^d\sum_{k=1}^d\sum_{n'=1,n'\neq n}^N\sum_{j'=1}^d\sum_{k'=1}^dz_iU_{jk}x^{(i)}_n x_n^{(j)} U_{j'k'}x^{(\ell)}_{n'} x_{n'}^{(j')}\Lam_{kk'}\right)
\\=&N\ex\left(\sum_{i=1}^d \sum_{j=1}^d\sum_{k=1}^d\sum_{j'=1}^d\sum_{k'=1}^dz_iU_{jk} U_{j'k'}(\Lam_{ij}\Lam_{\ell j'}+\Lam_{i\ell}\lam_{jj'}+\Lam_{ij'}\Lam_{j\ell})\Lam_{kk'}\right)
\\&+N(N-1)\ex\left(\sum_{i=1}^d \sum_{j=1}^d\sum_{k=1}^d\sum_{j'=1}^d\sum_{k'=1}^dz_iU_{jk}\Lam_{ij} U_{j'k'}\Lam_{\ell j'}\Lam_{kk'}\right)
\\=&N(z^\top\Lam U\Lam U^\top \Lam^\top)_\ell+N (z^\top\Lam)_\ell \tr(U\Lam U^\top \Lam^\top)+
\\&\quad 
+N(z^\top\Lam U \Lam^\top U^\top \Lam)_\ell+ N(N-1) (z^\top\Lam U\Lam U^\top \Lam^\top)_\ell
\\=&N (z^\top\Lam)_\ell\tr(U\Lam U^\top \Lam^\top)+ N(N+1) (z^\top\Lam U\Lam U^\top \Lam)_\ell.
\end{align*}
Again, this is the result of taking expectation the multivariate normal random variables separately, with the help of the Isserlis theorem. 

\begin{align*}
&\ex\left(\sum_{n=1}^N\sum_{i=1}^d \sum_{j=1}^d\sum_{k=1}^dz_iU_{jk}x^{(i)}_n x_n^{(j)}x^{(k)}_q \right)\left(\sum_{n=1}^N\sum_{j=1}^d\sum_{k=1}^dZ_kw_jx^{(\ell)}_n x_n^{(j)}x^{(k)}_q\right)=0,
\end{align*}
due to the symmetry of random variable $w$.

\begin{align*}
&\ex\left(\sum_{n=1}^N\sum_{i=1}^d \sum_{j=1}^d\sum_{k=1}^dz_iU_{jk}x^{(i)}_n x_n^{(j)}x^{(k)}_q \right)\left(\sum_{j=1}^d\sum_{k=1}^dU_{jk}x^{(\ell)}_q x_q^{(j)}x^{(k)}_q\right)
\\&= \ex\left(\sum_{n=1}^N\sum_{i=1}^d \sum_{j=1}^d\sum_{k=1}^d\sum_{j'=1}^d\sum_{k'=1}^dz_iU_{jk}x^{(i)}_n x_n^{(j)}x^{(k)}_q U_{j'k'}x^{(\ell)}_q x_q^{(j')}x^{(k')}_q\right)
\\&= N\ex\left(\sum_{i=1}^d \sum_{j=1}^d\sum_{k=1}^d\sum_{j'=1}^d\sum_{k'=1}^dz_iU_{jk}U_{j'k'}\Lam_{ij}x^{(k)}_q x^{(\ell)}_q x_q^{(j')}x^{(k')}_q\right)
\\&= N\ex\left(\sum_{i=1}^d \sum_{j=1}^d\sum_{k=1}^d\sum_{j'=1}^d\sum_{k'=1}^dz_iU_{jk}U_{j'k'}\Lam_{ij}(\Lam_{k\ell}\Lam_{j'k'}+\Lam_{kj'}\Lam_{\ell k'}+\Lam_{kk'}\Lam_{\ell j'})\right)
\\&= N(z^\top\Lam U \Lam)_\ell \tr(U\Lam^\top)+ N(z^\top\Lam U \Lam U\Lam^\top)_\ell+N(z^\top\Lam U\Lam U^\top \Lam^\top)_\ell.
\end{align*}

\begin{align*}
&\ex\left(\sum_{n=1}^N\sum_{i=1}^d \sum_{j=1}^d\sum_{k=1}^dz_iZ_kw_jx^{(i)}_n x_n^{(j)}x^{(k)}_q \right)\left(\sum_{n=1}^N\sum_{j=1}^d\sum_{k=1}^dU_{jk}x^{(\ell)}_n x_n^{(j)}x^{(k)}_q\right)=0.
\end{align*}

\begin{align*}
&\ex\left(\sum_{n=1}^N\sum_{i=1}^d \sum_{j=1}^d\sum_{k=1}^dz_iZ_kw_jx^{(i)}_n x_n^{(j)}x^{(k)}_q \right)\left(\sum_{n=1}^N\sum_{j=1}^d\sum_{k=1}^dZ_kw_jx^{(\ell)}_n x_n^{(j)}x^{(k)}_q\right)
\\=& \ex\left(\sum_{n=1}^N\sum_{i=1}^d \sum_{j=1}^d\sum_{k=1}^dz_iZ_kw_jx^{(i)}_n x_n^{(j)}x^{(k)}_q \sum_{n'=1}^N\sum_{j'=1}^d\sum_{k'=1}^dZ_{k'}w_{j'}x^{(\ell)}_{n'} x_{n'}^{(j')}x^{(k')}_q\right)
\\=& \ex\left(\sum_{n=1}^N\sum_{i=1}^d \sum_{j=1}^d\sum_{k=1}^dz_iZ_kx^{(i)}_n x_n^{(j)}x^{(k)}_q \sum_{n'=1}^N\sum_{k'=1}^dZ_{k'}x^{(\ell)}_{n'} x_{n'}^{(j)}x^{(k')}_q\right)
\\=& \ex\left(\sum_{n=1}^N\sum_{i=1}^d \sum_{j=1}^d\sum_{k=1}^d\sum_{n'=1}^N\sum_{k'=1}^dz_iZ_k Z_{k'}x^{(i)}_n x_n^{(j)}x^{(\ell)}_{n'} x_{n'}^{(j)}\Lam_{kk'}\right)
\\= & \ex\left(\sum_{n=1}^N\sum_{i=1}^d \sum_{j=1}^d\sum_{k=1}^d\sum_{k'=1}^dz_iZ_k Z_{k'}x^{(i)}_n x_n^{(j)}x^{(\ell)}_{n} x_{n}^{(j)}\Lam_{kk'}\right)
\\&+ \ex\left(\sum_{n=1}^N\sum_{i=1}^d \sum_{j=1}^d\sum_{k=1}^d\sum_{n'=1, n'\neq n}^N\sum_{k'=1}^dz_iZ_k Z_{k'}x^{(i)}_n x_n^{(j)}x^{(\ell)}_{n'} x_{n'}^{(j)}\Lam_{kk'}\right)
\\=& N\ex\left(\sum_{i=1}^d \sum_{j=1}^d\sum_{k=1}^d\sum_{n'=1}^N\sum_{k'=1}^dz_iZ_k Z_{k'}(2\Lam_{ij}\Lam_{\ell j}+ \Lam_{i\ell}\Lam_{j j})\Lam_{kk'}\right) +
\\ &\quad + N(N-1) \sum_{i=1}^d \sum_{j=1}^d\sum_{k=1}^d\sum_{k'=1}^dz_iZ_k Z_{k'} \Lam_{ij}\Lam_{\ell j}\Lam_{kk'}
\\=& 2(z^\top \Lam^2)_\ell (Z^\top\Lam Z)+N (z^\top \Lam)_\ell\tr(\Lam) (Z^\top\Lam Z)+N(N-1)(z^\top \Lam^2)_\ell (Z^\top\Lam Z)
\\=& N (z^\top \Lam)_\ell\tr(\Lam) (Z^\top\Lam Z)+N(N+1)(z^\top \Lam^2)_\ell (Z^\top\Lam Z).
\end{align*}

\begin{align*}
&\ex\left(\sum_{n=1}^N\sum_{i=1}^d \sum_{j=1}^d\sum_{k=1}^dz_iZ_kw_jx^{(i)}_n x_n^{(j)}x^{(k)}_q \right)\left(\sum_{j=1}^d\sum_{k=1}^dU_{jk}x^{(\ell)}_qx_q^{(j)}x^{(k)}_q\right)=0.
\end{align*}

\begin{align*}
&\ex\left(\sum_{n=1}^N\sum_{i=1}^d \sum_{j=1}^d\sum_{k=1}^dvw_iU_{jk}x^{(i)}_n x_n^{(j)}x^{(k)}_q \right)\left(\sum_{n=1}^N\sum_{j=1}^d\sum_{k=1}^dU_{jk}x^{(\ell)}_n x_n^{(j)}x^{(k)}_q\right)=0.
\end{align*}

\begin{align*}
&\ex\left(\sum_{n=1}^N\sum_{i=1}^d \sum_{j=1}^d\sum_{k=1}^dvw_iU_{jk}x^{(i)}_n x_n^{(j)}x^{(k)}_q \right)\left(\sum_{n=1}^N\sum_{j=1}^d\sum_{k=1}^dZ_kw_jx^{(\ell)}_n x_n^{(j)}x^{(k)}_q\right)
\\=&\ex\left(\sum_{n=1}^N\sum_{i=1}^d \sum_{j=1}^d\sum_{k=1}^dvw_iU_{jk}x^{(i)}_n x_n^{(j)}x^{(k)}_q \sum_{n'=1}^N\sum_{j'=1}^d\sum_{k'=1}^dZ_{k'}w_{j'}x^{(\ell)}_{n'} x_{n'}^{(j')}x^{(k')}_q\right)
\\=&\ex\left(\sum_{n=1}^N\sum_{i=1}^d \sum_{j=1}^d\sum_{k=1}^dvU_{jk}x^{(i)}_n x_n^{(j)}x^{(k)}_q \sum_{n'=1}^N\sum_{k'=1}^dZ_{k'}x^{(\ell)}_{n'} x_{n'}^{(i)}x^{(k')}_q\right)
\\=&\ex\left(\sum_{n=1}^N\sum_{i=1}^d \sum_{j=1}^d\sum_{k=1}^dvU_{jk}x^{(i)}_n x_n^{(j)} \sum_{n'=1}^N\sum_{k'=1}^dZ_{k'}x^{(\ell)}_{n'} x_{n'}^{(i)}\Lam_{kk'}\right)
\\=&\ex\left(\sum_{n=1}^N\sum_{i=1}^d \sum_{j=1}^d\sum_{k=1}^d\sum_{k'=1}^dvU_{jk}x^{(i)}_n x_n^{(j)} Z_{k'}x^{(\ell)}_{n} x_{n}^{(i)}\Lam_{kk'}\right)
\\&+\ex\left(\sum_{n=1}^N\sum_{i=1}^d \sum_{j=1}^d\sum_{k=1}^d\sum_{n'=1,n'\neq n}^N\sum_{k'=1}^dvU_{jk}x^{(i)}_n x_n^{(j)} Z_{k'}x^{(\ell)}_{n'} x_{n'}^{(i)}\Lam_{kk'}\right)
\\=&N\ex\left(\sum_{i=1}^d \sum_{j=1}^d\sum_{k=1}^d\sum_{k'=1}^dvU_{jk}Z_{k'}(2\Lam_{ij}\Lam_{\ell i}+\Lam_{ii}\Lam_{\ell j})\Lam_{kk'}\right)
\\&+N(N-1)\ex\left(\sum_{i=1}^d \sum_{j=1}^d\sum_{k=1}^d\sum_{k'=1}^dvU_{jk}\Lam_{ij} \Lam_{\ell i}Z_{k'}\Lam_{kk'}\right)
\\=&2N(vZ^\top\Lam U^\top \Lam^2)_\ell+ N(vZ^\top\Lam U^\top \Lam)_\ell \tr(\Lam)+N(N-1) (vZ^\top\Lam U\Lam^2)_\ell
\\=& N(vZ^\top\Lam U^\top \Lam)_\ell \tr(\Lam)+N(N+1) (vZ^\top\Lam U^\top\Lam^2)_\ell.
\end{align*}

\begin{align*}
&\ex\left(\sum_{n=1}^N\sum_{i=1}^d \sum_{j=1}^d\sum_{k=1}^dvw_iU_{jk}x^{(i)}_n x_n^{(j)}x^{(k)}_q \right)\left(\sum_{j=1}^d\sum_{k=1}^dU_{jk}x^{(\ell)}_qx_q^{(j)}x^{(k)}_q\right)=0.
\end{align*}

\begin{align*}
&\ex\left(\sum_{n=1}^N\sum_{i=1}^d \sum_{j=1}^d\sum_{k=1}^dvZ_kw_iw_jx^{(i)}_n x_n^{(j)}x^{(k)}_q \right)\left(\sum_{n=1}^N\sum_{j=1}^d\sum_{k=1}^dU_{jk}x^{(\ell)}_n x_n^{(j)}x^{(k)}_q\right)
\\=&\ex\left(\sum_{n=1}^N\sum_{i=1}^d \sum_{j=1}^d\sum_{k=1}^dvZ_kw_iw_jx^{(i)}_n x_n^{(j)}x^{(k)}_q \sum_{n'=1}^N\sum_{j'=1}^d\sum_{k'=1}^dU_{j'k'}x^{(\ell)}_{n'} x_{n'}^{(j')}x^{(k')}_q\right)
\\=&\ex\left(\sum_{n=1}^N\sum_{i=1}^d \sum_{k=1}^dvZ_kx^{(i)}_n x_n^{(i)}x^{(k)}_q \sum_{n'=1}^N\sum_{j'=1}^d\sum_{k'=1}^dU_{j'k'}x^{(\ell)}_{n'} x_{n'}^{(j')}x^{(k')}_q\right)
\\=&\ex\left(\sum_{n=1}^N\sum_{i=1}^d \sum_{k=1}^d\sum_{n'=1}^N\sum_{j'=1}^d\sum_{k'=1}^dvZ_kx^{(i)}_n x_n^{(i)}U_{j'k'}x^{(\ell)}_{n'} x_{n'}^{(j')}\Lam_{kk'}\right)
\\=&N(N+1)( vZ^\top\Lam U^\top\Lam^2)_\ell+ N(vZ^\top U^\top\Lam)_\ell \tr(\Lam).
\end{align*}

\begin{align*}
&\ex\left(\sum_{n=1}^N\sum_{i=1}^d \sum_{j=1}^d\sum_{k=1}^dvZ_kw_iw_jx^{(i)}_n x_n^{(j)}x^{(k)}_q \right)\left(\sum_{n=1}^N\sum_{j=1}^d\sum_{k=1}^dZ_kw_jx^{(\ell)}_nx_n^{(j)}x^{(k)}_q\right)=0
\end{align*}

\begin{align*}
&\ex\left(\sum_{n=1}^N\sum_{i=1}^d \sum_{j=1}^d\sum_{k=1}^dvZ_kw_iw_jx^{(i)}_n x_n^{(j)}x^{(k)}_q \right)\left(\sum_{j=1}^d\sum_{k=1}^dU_{jk}x^{(\ell)}_q x_q^{(j)}x^{(k)}_q\right)
\\=&\ex\left(\sum_{n=1}^N\sum_{i=1}^d \sum_{j=1}^d\sum_{k=1}^dvZ_kw_iw_jx^{(i)}_n x_n^{(j)}x^{(k)}_q \sum_{j'=1}^d\sum_{k'=1}^dU_{j'k'}x^{(\ell)}_q x_q^{(j')}x^{(k')}_q\right)
\\=&\ex\left(\sum_{n=1}^N\sum_{i=1}^d \sum_{k=1}^dvZ_kx^{(i)}_n x_n^{(i)}x^{(k)}_q \sum_{j'=1}^d\sum_{k'=1}^dU_{j'k'}x^{(\ell)}_q x_q^{(j')}x^{(k')}_q\right)
\\=&N\ex\left(\sum_{i=1}^d \sum_{k=1}^d\sum_{j'=1}^d\sum_{k'=1}^dvZ_k\Lam_{ii} U_{j'k'}x^{(k)}_qx^{(\ell)}_q x_q^{(j')}x^{(k')}_q\right)
\\=&N\sum_{i=1}^d \sum_{k=1}^d\sum_{j'=1}^d\sum_{k'=1}^dvZ_k\Lam_{ii} U_{j'k'}
(\Lam_{k\ell}\Lam_{j'k'}+\Lam_{kj'}\Lam_{\ell k'}+\Lam_{kk'}\Lam_{\ell j'})
\\=&N(vZ^\top\Lam)_\ell \tr(\Lam) \tr (U\Lam)+N(vZ^\top\Lam U\Lam)_\ell \tr(\Lam) +N(vZ^\top\Lam U^\top \Lam)_\ell \tr(\Lam)
\end{align*}

\newpage

\begin{align*}
&\ex\left(\sum_{i=1}^d \sum_{j=1}^d\sum_{k=1}^dz_iU_{jk}x^{(i)}_q x_q^{(j)}x^{(k)}_q \right)\left(\sum_{n=1}^N\sum_{j=1}^d\sum_{k=1}^dU_{jk}x^{(\ell)}_n x_n^{(j)}x^{(k)}_q\right)
\\=&\ex\left(\sum_{i=1}^d \sum_{j=1}^d\sum_{k=1}^dz_iU_{jk}x^{(i)}_q x_q^{(j)}x^{(k)}_q \sum_{n=1}^N\sum_{j'=1}^d\sum_{k'=1}^dU_{j'k'}x^{(\ell)}_n x_n^{(j')}x^{(k')}_q\right)
\\=&N\ex\left(\sum_{i=1}^d \sum_{j=1}^d\sum_{k=1}^dz_iU_{jk}x^{(i)}_q x_q^{(j)}x^{(k)}_q \sum_{j'=1}^d\sum_{k'=1}^dU_{j'k'}\Lam_{\ell j'}x^{(k')}_q\right)
\\=&N\ex\left(\sum_{i=1}^d \sum_{j=1}^d\sum_{k=1}^d\sum_{j'=1}^d\sum_{k'=1}^dz_iU_{jk} U_{j'k'}\Lam_{\ell j'}x^{(i)}_q x_q^{(j)}x^{(k)}_qx^{(k')}_q\right)
\\=&N\sum_{i=1}^d \sum_{j=1}^d\sum_{k=1}^d\sum_{j'=1}^d\sum_{k'=1}^dz_iU_{jk} U_{j'k'}\Lam_{\ell j'}(\Lam_{ij}\Lam_{kk'}+\Lam_{ik}\Lam_{jk'}+\Lam_{ik'}\Lam_{kj})
\\=&N(z^\top \Lam U\Lam U^\top\Lam)_\ell+N(z^\top \Lam U^\top\Lam U^\top\Lam)_\ell+ N(z^T\Lam U^\top\Lam^\top)_\ell \tr(U\Lam).
\end{align*}

\begin{align*}
&\ex\left(\sum_{i=1}^d \sum_{j=1}^d\sum_{k=1}^dz_iU_{jk}x^{(i)}_q x_q^{(j)}x^{(k)}_q \right)\left(\sum_{n=1}^N\sum_{j=1}^d\sum_{k=1}^dZ_kw_jx^{(\ell)}_nx_n^{(j)}x^{(k)}_q\right)=0.
\end{align*}

\begin{align*}
&\ex\left(\sum_{i=1}^d \sum_{j=1}^d\sum_{k=1}^dz_iU_{jk}x^{(i)}_q x_q^{(j)}x^{(k)}_q \right)\left(\sum_{j=1}^d\sum_{k=1}^dU_{jk}x^{(\ell)}_q x_q^{(j)}x^{(k)}_q\right)
\\=
&\ex\left(\sum_{i=1}^d \sum_{j=1}^d\sum_{k=1}^dz_iU_{jk}x^{(i)}_q x_q^{(j)}x^{(k)}_q\sum_{j'=1}^d\sum_{k'=1}^dU_{j'k'}x^{(\ell)}_q x_q^{(j')}x^{(k')}_q\right)
\\=
&\ex\left(\sum_{i=1}^d \sum_{j=1}^d\sum_{k=1}^d\sum_{j'=1}^d\sum_{k'=1}^dz_iU_{jk}U_{j'k'}x^{(i)}_q x_q^{(j)}x^{(k)}_qx^{(\ell)}_q x_q^{(j')}x^{(k')}_q\right)
\\ &=\sum_{i=1}^d \sum_{j=1}^d\sum_{k=1}^d\sum_{j'=1}^d\sum_{k'=1}^dz_iU_{jk}U_{j'k'}
[\Lam_{ij}(\Lam_{k\ell}\Lam_{j'k'}+\Lam_{kj'}\Lam_{\ell k'}+\Lam_{kk'}\Lam_{j'\ell})
\\ &+\Lam_{ik}(\Lam_{j\ell}\Lam_{j'k'}+\Lam_{jj'}\Lam_{\ell k'}+\Lam_{jk'}\Lam_{j'\ell})+ \Lam_{ik'}(\Lam_{j\ell}\Lam_{j'k}+\Lam_{jj'}\Lam_{\ell k}+\Lam_{jk}\Lam_{j'\ell})
\\ &+\Lam_{i\ell}(\Lam_{jk'}\Lam_{j'k}+\Lam_{jj'}\Lam_{k' k}+\Lam_{jk}\Lam_{j'k'})+\Lam_{ij'}(\Lam_{jk'}\Lam_{\ell k}+\Lam_{j\ell}\Lam_{k' k} +\Lam_{jk}\Lam_{\ell k'})]
\\
&
= (z^\top \Lam U\Lam)_\ell \tr(U\Lam) + (z^\top \Lam U \Lam U\Lam)_\ell+ (z^\top \Lam U \Lam U^\top \Lam)_\ell
\\ 
&+ (z^\top \Lam U^\top \Lam)_\ell \tr(U\Lam) + (z^\top \Lam U^\top  \Lam U\Lam)_\ell+(z^\top \Lam U^\top \Lam U^\top\Lam)_\ell 
\\ 
&+ (z^\top \Lam U^\top \Lam U^\top\Lam)_\ell +(z^\top \Lam U^\top \Lam U\Lam)_\ell +(z^\top \Lam U^\top \Lam)_\ell \tr(U\Lam)
\\ 
&+ (z^\top \Lam)_\ell \tr(U\Lam U\Lam ) +(z^\top \Lam)_\ell \tr(U\Lam U^\top \Lam ) +(z^\top \Lam)_\ell \tr(U\Lam)\tr(U\Lam)
\\ 
& + (z^\top \Lam U \Lam U \Lam)_\ell + (z^\top \Lam U \Lam U^\top\Lam)_\ell+ (z^\top \Lam U \Lam)_\ell \tr(U\Lam).
\end{align*}
Here, The Isserlis theorem is applied to the sixth moment of $x_q$.

\begin{align*}
&\ex\left[\sum_{m=1}^d w_mx_q^{(m)} \left(\sum_{n=1}^N \sum_{j=1}^d\sum_{k=1}^d(U_{jk}+Z_kw_j)x^{(\ell)}_n x_n^{(j)}x^{(k)}_q +\sum_{j=1}^d \sum_{k=1}^d U_{jk} x_q^{(j)}x_q^{(\ell)}  x_q^{(k)}\right)\right]\\ 
=&\ex\left[\sum_{m=1}^d \sum_{n=1}^N \sum_{j=1}^d\sum_{k=1}^dZ_kw_j w_mx_q^{(m)}x^{(\ell)}_n x_n^{(j)}x^{(k)}_q \right]
\\=&\ex\left[ \sum_{n=1}^N \sum_{j=1}^d\sum_{k=1}^dZ_kx_q^{(j)}x^{(\ell)}_n x_n^{(j)}x^{(k)}_q \right]
\\=& N\sum_{j=1}^d\sum_{k=1}^dZ_k\Lam_{\ell j}\Lam_{kj}=N(Z^\top\Lam^2)_\ell.
\end{align*}

\subsection{Detailed Calculations of $\frac{\partial L}{\partial Z}$}
\label{sec:DZ}

Recall that, For any $\ell=1,2,\ldots, d$, 
\begin{align*}
\frac{\partial L}{\partial Z_\ell}= &\ex\left[\frac{1}{N}\sum_{n=1}^N\sum_{i=1}^d \sum_{j=1}^d\sum_{k=1}^d(z_i  U_{jk}+z_iZ_k w_j+vw_iU_{jk}+vZ_kw_iw_j)x^{(i)}_n x_n^{(j)}x^{(k)}_q \right.\\ &\left. +\frac{1}{N}\sum_{i=1}^d\sum_{j=1}^d \sum_{k=1}^d (z_iU_{jk}) x_q^{(j)}x_q^{(i)}  x_q^{(k)}-\sum_{m=1}^d w_mx_q^{(m)}\right]\\  &\times \left(\frac{1}{N}\sum_{n=1}^N \sum_{i=1}^d\sum_{j=1}^d(z_iw_j+vw_iw_j)x^{(i)}_n x_n^{(j)}x^{(\ell)}_q \right).
\end{align*}

Calculations below gives us, 
\begin{align*}
\left(\frac{\partial L}{\partial Z}\right)_\ell = & 
\frac{2}{N} (vz^T\Lam^2U\Lam)_\ell+(vz^\top \Lam U\Lam)_\ell \tr(\Lam)\nonumber
    \\ &+ \frac{1}{N}(z^\top\Lam z Z^\top\Lam)_\ell\tr(\Lam)+ \frac{N+1}{N} (z^\top \Lam^2 z Z^\top\Lam)_\ell \nonumber
    \\& +\frac{2}{N}(vz^\top\Lam U\Lam)_\ell \tr(\Lam)+ \frac{N+1}{N}(vz^\top\Lam^2U\Lam)_\ell \nonumber
    \\ &+\frac{N+4}{N}(v^2 Z^\top\Lam)_\ell (\tr(\Lam))^2+2\frac{N+1}{N}(v^2Z^\top \Lam)_\ell \tr(\Lam^2) \nonumber
    \\&+ \frac{1}{N}(vz^\top \Lam U^\top \Lam)_\ell\tr(\Lam)+ \frac{1}{N}(vz^\top\Lam)_\ell\tr(U\Lam)\tr(\Lam)-(z^\top\Lam^2)_\ell.
\end{align*}
Below, we will collect all the calculations, with the same arguments that have already been introduced. 
\begin{align*}
&\ex\left(\sum_{n=1}^N\sum_{i=1}^d \sum_{j=1}^d\sum_{k=1}^dz_iU_{jk}x^{(i)}_n x_n^{(j)}x^{(k)}_q \right)\left(\sum_{n=1}^N\sum_{i=1}^d\sum_{j=1}^dz_iw_jx^{(i)}_nx_n^{(j)}x^{(\ell)}_q\right)=0. 
\end{align*}

\begin{align*}
&\ex\left(\sum_{n=1}^N\sum_{i=1}^d \sum_{j=1}^d\sum_{k=1}^dz_iU_{jk}x^{(i)}_n x_n^{(j)}x^{(k)}_q \right)\left(\sum_{n=1}^N\sum_{i=1}^d\sum_{j=1}^dvw_iw_jx^{(i)}_nx_n^{(j)}x^{(\ell)}_q\right)\\=&\ex\left(\sum_{n=1}^N\sum_{i=1}^d \sum_{j=1}^d\sum_{k=1}^dz_iU_{jk}x^{(i)}_n x_n^{(j)}x^{(k)}_q \sum_{n'=1}^N\sum_{i'=1}^d\sum_{j'=1}^dvw_{i'}w_{j'}x^{(i')}_{n'}x_{n'}^{(j')}x^{(\ell)}_q\right)
\\=&\ex\left(\sum_{n=1}^N\sum_{i=1}^d \sum_{j=1}^d\sum_{k=1}^dz_iU_{jk}x^{(i)}_n x_n^{(j)}x^{(k)}_q \sum_{n'=1}^N\sum_{i'=1}^dvx^{(i')}_{n'}x_{n'}^{(i')}x^{(\ell)}_q\right)
\\=&\ex\left(\sum_{n=1}^N\sum_{i=1}^d \sum_{j=1}^d\sum_{k=1}^d\sum_{n'=1}^N\sum_{i'=1}^dz_iU_{jk}x^{(i)}_n x_n^{(j)} vx^{(i')}_{n'}x_{n'}^{(i')}\Lam_{k\ell}\right)
\\=&\ex\left(\sum_{n=1}^N\sum_{i=1}^d \sum_{j=1}^d\sum_{k=1}^d\sum_{i'=1}^dz_iU_{jk}x^{(i)}_n x_n^{(j)} vx^{(i')}_{n}x_{n}^{(i')}\Lam_{k\ell}\right)\\&+\ex\left(\sum_{n=1}^N\sum_{i=1}^d \sum_{j=1}^d\sum_{k=1}^d\sum_{n'=1, n'\neq n}^N\sum_{i'=1}^dz_iU_{jk}x^{(i)}_n x_n^{(j)} vx^{(i')}_{n'}x_{n'}^{(i')}\Lam_{k\ell}\right)
\\=&N\sum_{i=1}^d \sum_{j=1}^d\sum_{k=1}^d\sum_{i'=1}^dz_iU_{jk} v\Lam_{k\ell}(\Lam_{ij}\Lam_{i'i'}+2\Lam_{ii'}\Lam_{ji'})\\&+N(N-1)\sum_{i=1}^d \sum_{j=1}^d\sum_{k=1}^d\sum_{i'=1}^dz_iU_{jk}\Lam_{ij}\Lam_{i'i'} v\Lam_{k\ell}
\\&=N(vz^\top \Lam U\Lam)_\ell \tr(\Lam) + 2N(vz^T\Lam^2U\Lam)_\ell+N(N-1)(vz^\top \Lam U\Lam)_\ell \tr(\Lam)
\\&=2N(vz^T\Lam^2U\Lam)_\ell+N^2(vz^\top \Lam U\Lam)_\ell \tr(\Lam).
\end{align*}

\begin{align*}
&\ex\left(\sum_{n=1}^N\sum_{i=1}^d \sum_{j=1}^d\sum_{k=1}^dz_iZ_kw_jx^{(i)}_n x_n^{(j)}x^{(k)}_q \right)\left(\sum_{n=1}^N\sum_{i=1}^d\sum_{j=1}^dz_iw_jx^{(i)}_nx_n^{(j)}x^{(\ell)}_q\right)
\\=&\ex\left(\sum_{n=1}^N\sum_{i=1}^d \sum_{j=1}^d\sum_{k=1}^dz_iZ_kw_jx^{(i)}_n x_n^{(j)}x^{(k)}_q \sum_{n'=1}^N\sum_{i'=1}^d\sum_{j'=1}^dz_{i'}w_{j'}x^{(i')}_{n'}x_{n'}^{(j')}x^{(\ell)}_q\right)
\\=&\ex\left(\sum_{n=1}^N\sum_{i=1}^d \sum_{j=1}^d\sum_{k=1}^dz_iZ_kx^{(i)}_n x_n^{(j)}x^{(k)}_q \sum_{n'=1}^N\sum_{i'=1}^dz_{i'}x^{(i')}_{n'}x_{n'}^{(j)}x^{(\ell)}_q\right)
\\=&\ex\left(\sum_{n=1}^N\sum_{i=1}^d \sum_{j=1}^d\sum_{k=1}^d\sum_{n'=1}^N\sum_{i'=1}^dz_iz_{i'}Z_kx^{(i)}_n x_n^{(j)} x^{(i')}_{n'}x_{n'}^{(j)}\Lam_{k\ell}\right)
\\=&\ex\left(\sum_{n=1}^N\sum_{i=1}^d \sum_{j=1}^d\sum_{k=1}^d\sum_{i'=1}^dz_iz_{i'}Z_kx^{(i)}_n x_n^{(j)} x^{(i')}_{n}x_{n}^{(j)}\Lam_{k\ell}\right)
\\&+\ex\left(\sum_{n=1}^N\sum_{i=1}^d \sum_{j=1}^d\sum_{k=1}^d\sum_{n'=1, n'\neq n}^N\sum_{i'=1}^dz_iz_{i'}Z_kx^{(i)}_n x_n^{(j)} x^{(i')}_{n'}x_{n'}^{(j)}\Lam_{k\ell}\right)
\\=&N\sum_{i=1}^d \sum_{j=1}^d\sum_{k=1}^d\sum_{i'=1}^dz_iz_{i'}Z_k\Lam_{k\ell}(2\Lam_{ij}\Lam_{i'j}+\Lam_{ii'}\Lam_{jj})
\\&+N(N-1)\sum_{i=1}^d \sum_{j=1}^d\sum_{k=1}^d\sum_{i'=1}^dz_iz_{i'}Z_k \Lam_{ij}\Lam_{i'j}\Lam_{k\ell}
\\=&N\sum_{i=1}^d \sum_{j=1}^d\sum_{k=1}^d\sum_{i'=1}^dz_iz_{i'}Z_k\Lam_{k\ell}\Lam_{ii'}\Lam_{jj}
\\&+N(N+1)\sum_{i=1}^d \sum_{j=1}^d\sum_{k=1}^d\sum_{i'=1}^dz_iz_{i'}Z_k\Lam_{ij}\Lam_{i'j}\Lam_{k\ell}
\\=& N(z^\top\Lam z Z^\top\Lam)_\ell\tr(\Lam)+ N(N+1) (z^\top \Lam^2 z Z^\top\Lam)_\ell.
\end{align*}

\begin{align*}
&\ex\left(\sum_{n=1}^N\sum_{i=1}^d \sum_{j=1}^d\sum_{k=1}^dz_iZ_kw_jx^{(i)}_n x_n^{(j)}x^{(k)}_q \right)\left(\sum_{n=1}^N\sum_{i=1}^d\sum_{j=1}^dvw_iw_jx^{(i)}_nx_n^{(j)}x^{(\ell)}_q\right)=0.
\end{align*}

\begin{align*}
&\ex\left(\sum_{n=1}^N\sum_{i=1}^d \sum_{j=1}^d\sum_{k=1}^dvw_iU_{jk}x^{(i)}_n x_n^{(j)}x^{(k)}_q \right)\left(\sum_{n=1}^N\sum_{i=1}^d\sum_{j=1}^dz_iw_jx^{(i)}_nx_n^{(j)}x^{(\ell)}_q\right)
\\=&\ex\left(\sum_{n=1}^N\sum_{i=1}^d \sum_{j=1}^d\sum_{k=1}^dvw_iU_{jk}x^{(i)}_n x_n^{(j)}x^{(k)}_q \sum_{n'=1}^N\sum_{i'=1}^d\sum_{j'=1}^dz_{i'}w_{j'}x^{(i')}_{n'}x_{n'}^{(j')}x^{(\ell)}_q\right)
\\=&\ex\left(\sum_{n=1}^N\sum_{i=1}^d \sum_{j=1}^d\sum_{k=1}^dvU_{jk}x^{(i)}_n x_n^{(j)}x^{(k)}_q \sum_{n'=1}^N\sum_{i'=1}^dz_{i'}x^{(i')}_{n'}x_{n'}^{(i)}x^{(\ell)}_q\right)
\\=&\ex\left(\sum_{n=1}^N\sum_{i=1}^d \sum_{j=1}^d\sum_{k=1}^d\sum_{n'=1}^N\sum_{i'=1}^dvU_{jk}x^{(i)}_n x_n^{(j)} z_{i'}x^{(i')}_{n'}x_{n'}^{(i)}\Lam_{k\ell}\right)
\\=&\ex\left(\sum_{n=1}^N\sum_{i=1}^d \sum_{j=1}^d\sum_{k=1}^d\sum_{i'=1}^dvU_{jk}x^{(i)}_n x_n^{(j)} z_{i'}x^{(i')}_{n}x_{n}^{(i)}\Lam_{k\ell}\right)
\\&+\ex\left(\sum_{n=1}^N\sum_{i=1}^d \sum_{j=1}^d\sum_{k=1}^d\sum_{n'=1,n'\neq n}^N\sum_{i'=1}^dvU_{jk}x^{(i)}_n x_n^{(j)} z_{i'}x^{(i')}_{n'}x_{n'}^{(i)}\Lam_{k\ell}\right)
\\=&N\sum_{i=1}^d \sum_{j=1}^d\sum_{k=1}^d\sum_{i'=1}^dvU_{jk}(2\Lam_{ij}\Lam_{i'i}+\Lam_{ii}\Lam_{i'j}) z_{i'}\Lam_{k\ell}
\\&+N(N-1)\sum_{i=1}^d \sum_{j=1}^d\sum_{k=1}^d\sum_{i'=1}^dvU_{jk}z_{i'}\Lam_{ij}\Lam_{i'i}\Lam_{k\ell} 
\\=&N\sum_{i=1}^d \sum_{j=1}^d\sum_{k=1}^d\sum_{i'=1}^dvU_{jk}\Lam_{ii}\Lam_{i'j}z_{i'}\Lam_{k\ell}+N(N+1)\sum_{i=1}^d \sum_{j=1}^d\sum_{k=1}^d\sum_{i'=1}^dvU_{jk}z_{i'}\Lam_{ij}\Lam_{i'i}\Lam_{k\ell} 
\\=&N(vz^\top\Lam U\Lam)_\ell \tr(\Lam)+ N(N+1)(vz^\top\Lam^2U\Lam)_\ell.
\end{align*}

\begin{align*}
&\ex\left(\sum_{n=1}^N\sum_{i=1}^d \sum_{j=1}^d\sum_{k=1}^dvw_iU_{jk}x^{(i)}_n x_n^{(j)}x^{(k)}_q \right)\left(\sum_{n=1}^N\sum_{i=1}^d\sum_{j=1}^dvw_iw_jx^{(i)}_nx_n^{(j)}x^{(\ell)}_q\right)=0.
\end{align*}

\begin{align*}
&\ex\left(\sum_{n=1}^N\sum_{i=1}^d \sum_{j=1}^d\sum_{k=1}^dvZ_kw_iw_jx^{(i)}_n x_n^{(j)}x^{(k)}_q \right)\left(\sum_{n=1}^N\sum_{i=1}^d\sum_{j=1}^dz_iw_jx^{(i)}_nx_n^{(j)}x^{(\ell)}_q\right)=0
\end{align*}

\begin{align*}
&\ex\left(\sum_{n=1}^N\sum_{i=1}^d \sum_{j=1}^d\sum_{k=1}^dvZ_kw_iw_jx^{(i)}_n x_n^{(j)}x^{(k)}_q \right)\left(\sum_{n=1}^N\sum_{i=1}^d\sum_{j=1}^dvw_iw_jx^{(i)}_nx_n^{(j)}x^{(\ell)}_q\right)\\
=&\ex\left(\sum_{n=1}^N\sum_{i=1}^d \sum_{j=1}^d\sum_{k=1}^dvZ_kw_iw_jx^{(i)}_n x_n^{(j)}x^{(k)}_q \sum_{n'=1}^N\sum_{i'=1}^d\sum_{j'=1}^dvw_{i'}w_{j'}x^{(i')}_{n'}x_{n'}^{(j')}x^{(\ell)}_q\right)\\
=&\ex\left(\sum_{n=1}^N\sum_{i=1}^d \sum_{j=1}^d\sum_{k=1}^d\sum_{n'=1}^N\sum_{i'=1}^d\sum_{j'=1}^dvZ_k (\delta_{i,j}\delta_{i',j'}+\delta_{i,j'}\delta_{i',j}+ \delta_{i,i'}\delta_{j,j'})x^{(i)}_n x_n^{(j)}x^{(k)}_q vx^{(i')}_{n'}x_{n'}^{(j')}x^{(\ell)}_q\right)
\\
=&\ex\left(\sum_{n=1}^N\sum_{i=1}^d \sum_{j=1}^d\sum_{k=1}^d\sum_{n'=1}^N\sum_{i'=1}^d\sum_{j'=1}^dvZ_k \delta_{i,j}\delta_{i',j'}x^{(i)}_n x_n^{(j)}x^{(k)}_q vx^{(i')}_{n'}x_{n'}^{(j')}x^{(\ell)}_q\right)
\\&+\ex\left(\sum_{n=1}^N\sum_{i=1}^d \sum_{j=1}^d\sum_{k=1}^d\sum_{n'=1}^N\sum_{i'=1}^d\sum_{j'=1}^dvZ_k \delta_{i,j'}\delta_{i',j}x^{(i)}_n x_n^{(j)}x^{(k)}_q vx^{(i')}_{n'}x_{n'}^{(j')}x^{(\ell)}_q\right)
\\
&+\ex\left(\sum_{n=1}^N\sum_{i=1}^d \sum_{j=1}^d\sum_{k=1}^d\sum_{n'=1}^N\sum_{i'=1}^d\sum_{j'=1}^dvZ_k \delta_{i,i'}\delta_{j,j'}x^{(i)}_n x_n^{(j)}x^{(k)}_q vx^{(i')}_{n'}x_{n'}^{(j')}x^{(\ell)}_q\right)
\\
=&\underbrace{\ex\left(\sum_{n=1}^N\sum_{i=1}^d \sum_{k=1}^d\sum_{n'=1}^N\sum_{i'=1}^dvZ_k x^{(i)}_n x_n^{(i)}x^{(k)}_q vx^{(i')}_{n'}x_{n'}^{(i')}x^{(\ell)}_q\right)}_{L_1}
\\&+\underbrace{\ex\left(\sum_{n=1}^N\sum_{i=1}^d \sum_{j=1}^d\sum_{k=1}^d\sum_{n'=1}^N\sum_{i'=1}^dvZ_k x^{(i)}_n x_n^{(i')}x^{(k)}_q vx^{(i')}_{n'}x_{n'}^{(i)}x^{(\ell)}_q\right)}_{L_2}
\\
&+\underbrace{\ex\left(\sum_{n=1}^N\sum_{i=1}^d \sum_{j=1}^d\sum_{k=1}^d\sum_{n'=1}^NvZ_k x^{(i)}_n x_n^{(j)}x^{(k)}_q vx^{(i)}_{n'}x_{n'}^{(j)}x^{(\ell)}_q\right)}_{L3}.
\end{align*}
\begin{align*}
L_1=&\ex\left(\sum_{n=1}^N\sum_{i=1}^d \sum_{k=1}^d\sum_{n'=1}^N\sum_{i'=1}^dvZ_k x^{(i)}_n x_n^{(i)}x^{(k)}_q vx^{(i')}_{n'}x_{n'}^{(i')}x^{(\ell)}_q\right)\\=&\ex\left(\sum_{n=1}^N\sum_{i=1}^d \sum_{k=1}^d\sum_{n'=1}^N\sum_{i'=1}^dv^2Z_k x^{(i)}_n x_n^{(i)} x^{(i')}_{n'}x_{n'}^{(i')}\Lam_{k\ell}\right)
\\=&\ex\left(\sum_{n=1}^N\sum_{i=1}^d \sum_{k=1}^d\sum_{i'=1}^dv^2Z_k x^{(i)}_n x_n^{(i)} x^{(i')}_{n}x_{n}^{(i')}\Lam_{k\ell}\right)
\\&+\ex\left(\sum_{n=1}^N\sum_{i=1}^d \sum_{k=1}^d\sum_{n'=1,n'\neq n}^N\sum_{i'=1}^dv^2Z_k x^{(i)}_n x_n^{(i)} x^{(i')}_{n'}x_{n'}^{(i')}\Lam_{k\ell}\right)
\\=&N\sum_{i=1}^d \sum_{k=1}^d\sum_{i'=1}^dv^2Z_k (\Lam_{ii}\Lam_{i'i'}+2\Lam_{ii'}\Lam_{ii'})\Lam_{k\ell}
+N(N-1)\sum_{i=1}^d \sum_{k=1}^d\sum_{i'=1}^dv^2Z_k \Lam_{ii}\Lam_{i'i'}\Lam_{k\ell}
\\=&2N\sum_{i=1}^d \sum_{k=1}^d\sum_{i'=1}^dv^2Z_k \Lam_{ii'}\Lam_{ii'}\Lam_{k\ell}
+N^2\sum_{i=1}^d \sum_{k=1}^d\sum_{i'=1}^dv^2Z_k \Lam_{ii}\Lam_{i'i'}\Lam_{k\ell}
\\=&2N(v^2Z^\top \Lam)_\ell \tr(\Lam^2)+ N^2(v^2 Z^\top \Lam)_\ell(\tr(\Lam))^2.
\end{align*}
\begin{align*}
L_2=&
\ex\left(\sum_{n=1}^N\sum_{i=1}^d \sum_{j=1}^d\sum_{k=1}^d\sum_{n'=1}^N\sum_{i'=1}^dv^2Z_k x^{(i)}_n x_n^{(i')}x^{(k)}_q x^{(i')}_{n'}x_{n'}^{(i)}x^{(\ell)}_q\right) 
\\=&
\ex\left(\sum_{n=1}^N\sum_{i=1}^d \sum_{j=1}^d\sum_{k=1}^d\sum_{n'=1}^N\sum_{i'=1}^dv^2Z_k x^{(i)}_n x_n^{(i')}  x^{(i')}_{n'}x_{n'}^{(i)}\Lam_{k\ell}\right) 
\\=&
\ex\left(\sum_{n=1}^N\sum_{i=1}^d \sum_{j=1}^d\sum_{k=1}^d\sum_{i'=1}^dv^2Z_k x^{(i)}_n x_n^{(i')}  x^{(i')}_{n}x_{n}^{(i)}\Lam_{k\ell}\right) 
\\&+
\ex\left(\sum_{n=1}^N\sum_{i=1}^d \sum_{j=1}^d\sum_{k=1}^d\sum_{n'=1, n'\neq n}^N\sum_{i'=1}^dv^2Z_k x^{(i)}_n x_n^{(i')}  x^{(i')}_{n'}x_{n'}^{(i)}\Lam_{k\ell}\right) 
\\=&N\sum_{i=1}^d \sum_{j=1}^d\sum_{k=1}^d\sum_{i'=1}^dv^2Z_k (2\Lam_{ii'}\Lam_{i'i}+\Lam_{ii}\Lam_{i'i'})
\Lam_{k\ell}+N(N-1)\sum_{i=1}^d \sum_{j=1}^d\sum_{k=1}^d\sum_{i'=1}^dv^2Z_k \Lam_{ii'}\Lam_{i'i}
\Lam_{k\ell}
\\=&N\sum_{i=1}^d \sum_{j=1}^d\sum_{k=1}^d\sum_{i'=1}^dv^2Z_k \Lam_{ii}\Lam_{i'i'}
\Lam_{k\ell}+N(N+1)\sum_{i=1}^d \sum_{j=1}^d\sum_{k=1}^d\sum_{i'=1}^dv^2Z_k \Lam_{ii'}\Lam_{i'i}
\Lam_{k\ell}
\\=&N(v^2 Z^\top\Lam)_\ell (\tr(\Lam))^2+N(N+1)(v^2Z^\top \Lam)_\ell \tr(\Lam^2).
\end{align*}

\begin{align*}
L_3=&\ex\left(\sum_{n=1}^N\sum_{i=1}^d \sum_{j=1}^d\sum_{k=1}^d\sum_{n'=1}^Nv^2Z_k x^{(i)}_n x_n^{(j)}x^{(k)}_q x^{(i)}_{n'}x_{n'}^{(j)}x^{(\ell)}_q\right)
\\=&\ex\left(\sum_{n=1}^N\sum_{i=1}^d \sum_{j=1}^d\sum_{k=1}^d\sum_{n'=1}^Nv^2Z_k x^{(i)}_n x_n^{(j)}x^{(i)}_{n'}x_{n'}^{(j)}\Lam_{k\ell}\right)
\\=&\ex\left(\sum_{n=1}^N\sum_{i=1}^d \sum_{j=1}^d\sum_{k=1}^dv^2Z_k x^{(i)}_n x_n^{(j)}x^{(i)}_{n}x_{n}^{(j)}\Lam_{k\ell}\right)
\\&+\ex\left(\sum_{n=1}^N\sum_{i=1}^d \sum_{j=1}^d\sum_{k=1}^d\sum_{n'=1,n'\neq n}^Nv^2Z_k x^{(i)}_n x_n^{(j)}x^{(i)}_{n'}x_{n'}^{(j)}\Lam_{k\ell}\right)
\\=& N\sum_{i=1}^d \sum_{j=1}^d\sum_{k=1}^dv^2Z_k(2\Lam_{ij}\Lam_{ij}+\Lam_{ii}\Lam_{jj})\Lam_{k\ell}+N(N-1)\sum_{i=1}^d \sum_{j=1}^d\sum_{k=1}^dv^2Z_k\Lam_{ij}\Lam_{ij}\Lam_{k\ell}
\\=& N\sum_{i=1}^d \sum_{j=1}^d\sum_{k=1}^dv^2Z_k\Lam_{ii}\Lam_{jj}\Lam_{k\ell}+N(N+1)\sum_{i=1}^d \sum_{j=1}^d\sum_{k=1}^dv^2Z_k\Lam_{ij}\Lam_{ij}\Lam_{k\ell}
\\=&N(v^2 Z^\top\Lam)_\ell (\tr(\Lam))^2+N(N+1)(v^2Z^\top \Lam)_\ell \tr(\Lam^2).
\end{align*}

Therefore, $L_1+L_2+L_3=N(N+4)(v^2 Z^\top\Lam)_\ell (\tr(\Lam))^2+2N(N+1)(v^2Z^\top \Lam)_\ell \tr(\Lam^2)$.

\begin{align*}
&\ex\left(\sum_{i=1}^d \sum_{j=1}^d\sum_{k=1}^dz_iU_{jk}x^{(i)}_q x_q^{(j)}x^{(k)}_q \right)\left(\sum_{n=1}^N\sum_{i=1}^d\sum_{j=1}^dz_iw_jx^{(i)}_nx_n^{(j)}x^{(\ell)}_q\right)=0.
\end{align*}

\begin{align*}
&\ex\left(\sum_{i=1}^d \sum_{j=1}^d\sum_{k=1}^dz_iU_{jk}x^{(i)}_q x_q^{(j)}x^{(k)}_q \right)\left(\sum_{n=1}^N\sum_{i=1}^d\sum_{j=1}^dvw_iw_jx^{(i)}_nx_n^{(j)}x^{(\ell)}_q\right)\\=&\ex\left(\sum_{i=1}^d \sum_{j=1}^d\sum_{k=1}^dz_iU_{jk}x^{(i)}_q x_q^{(j)}x^{(k)}_q \sum_{n'=1}^N\sum_{i'=1}^d\sum_{j'=1}^dvw_{i'}w_{j'}x^{(i')}_{n'}x_{n'}^{(j')}x^{(\ell)}_q\right)
\\=&\ex\left(\sum_{i=1}^d \sum_{j=1}^d\sum_{k=1}^dz_iU_{jk}x^{(i)}_q x_q^{(j)}x^{(k)}_q \sum_{n'=1}^N\sum_{i'=1}^dvx^{(i')}_{n'}x_{n'}^{(i')}x^{(\ell)}_q\right)
\\=&N\ex\left(\sum_{i=1}^d \sum_{j=1}^d\sum_{k=1}^d\sum_{i'=1}^dvz_iU_{jk}x^{(i)}_q x_q^{(j)}x^{(k)}_q x^{(\ell)}_q\Lam_{i'i'}\right)
\\=&N\ex\left(\sum_{i=1}^d \sum_{j=1}^d\sum_{k=1}^d\sum_{i'=1}^dvz_iU_{jk}(\Lam_{ij}\Lam_{k\ell}+\Lam_{ik}\Lam_{j\ell}+\Lam_{i\ell}\Lam_{jk})\Lam_{i'i'}\right)
\\=& N(vz^\top\Lam U\Lam)_\ell\tr(\Lam)+ N(vz^\top U^\top \Lam)_\ell\tr(\Lam)+ N(vz^\top\Lam)_\ell\tr(U\Lam)\tr(\Lam).
\end{align*}

\begin{align*}
&\ex\left(\sum_{m=1}^d w_mx_q^{(m)}\sum_{n=1}^N \sum_{i=1}^d\sum_{j=1}^d(z_iw_j+vw_iw_j)x^{(i)}_n x_n^{(j)}x^{(\ell)}_q \right)\\ =&\ex\left(\sum_{m=1}^d \sum_{n=1}^N \sum_{i=1}^d\sum_{j=1}^dz_iw_jw_mx_q^{(m)}x^{(i)}_n x_n^{(j)}x^{(\ell)}_q \right)
\\ =&\ex\left( \sum_{n=1}^N \sum_{i=1}^d\sum_{j=1}^dz_ix_q^{(j)}x^{(i)}_n x_n^{(j)}x^{(\ell)}_q \right)
\\ =&N\sum_{i=1}^d\sum_{j=1}^dz_i\Lam_{j\ell}\Lam_{ij}=N(z^\top\Lam^2)_\ell.
\end{align*}

\subsection{Detailed Calculations of $\frac{\partial L}{\partial U}$}
\label{sec:DU}

Recall that, for $\ell, p=1,2,\ldots d$, we have, 
\begin{align*}
\frac{\partial L}{\partial U_{\ell p}}= &\ex\left[\frac{1}{N}\sum_{n=1}^N\sum_{i=1}^d \sum_{j=1}^d\sum_{k=1}^d(z_i  U_{jk}+z_iZ_k w_j+vw_iU_{jk}+vZ_kw_iw_j)x^{(i)}_n x_n^{(j)}x^{(k)}_q \right.\\ &\left. +\frac{1}{N}\sum_{i=1}^d\sum_{j=1}^d \sum_{k=1}^d (z_iU_{jk}) x_q^{(j)}x_q^{(i)}  x_q^{(k)}-\sum_{m=1}^d w_mx_q^{(m)}\right]\\  &\times \frac{1}{N}\left(\sum_{n=1}^N \sum_{i=1}^d(z_i+vw_i)x^{(i)}_n x^{(\ell)}_n x_q^{(p)}+ \sum_{i=1}^dz_i x_q^{(i)}  x_q^{(\ell)}x_q^{(p)}\right).
\end{align*}

Calculations below gives us, 
\begin{align*}
\frac{\partial L}{\partial U_{\ell p}}= &\frac{1}{N}(z^\top \Lam z)(\Lam U\Lam)_{\ell p}+ \frac{N+3}{N}(z^\top\Lam)_\ell (z^\top \Lam U\Lam)_p \nonumber
\\
 &+\frac{1}{N}(z^\top\Lam U\Lam z)(\Lam)_{\ell P}+\frac{1}{N}(z^\top\Lam U\Lam)_\ell (z^\top \Lam)_p  \nonumber
 \\ &+
 \frac{N+1}{N}v\tr(\Lam) (z^\top \Lam)_\ell (Z^\top\Lam)_p+ \frac{N+3}{N}v(z^\top \Lam^2)_\ell (Z^\top \Lam)_p \nonumber
 \\&+
 \frac{1}{N} v^2(\Lam U\Lam)_{\ell p}\tr(\Lam)+\frac{N+1}{N} v^2 (\Lam ^2U\Lam)_{\ell p} \nonumber
 \\ &+
 \frac{1}{N}v [(z^\top\Lam Z) \Lam_{\ell p} + 2(Z^\top \Lam )_\ell(z^\top \Lam)_p]\tr(\Lam) \nonumber
\\&+ \frac{1}{N}(z^\top\Lam)_\ell(z^\top\Lam U^\top\Lam)_p+ \frac{1}{N}(z^\top\Lam)_\ell(z^\top\Lam)_p\tr(U\Lam) \nonumber
\\&+\frac{1}{N^2}[(z^\top \Lam U\Lam z)\Lam_{\ell p}+ 2(z^\top \Lam U\Lam)_\ell (z^\top \Lam)_p+ 2(z^\top \Lam)_\ell(z^\top \Lam U\Lam)_p \nonumber
\\&+ (z^\top \Lam U^\top \Lam z) \Lam_{\ell p} + 2(z^\top \Lam)_\ell(z^\top \Lam U^\top\Lam)_p \nonumber
\\&+ (z^\top \Lam z) \tr(U\Lam) \Lam_{\ell p}+  (z^\top \Lam z)(\Lam U^\top \Lam)_{\ell p} + (z^\top \Lam z)(\Lam U \Lam)_{\ell p}\nonumber
\\ &+ 2(z^\top \Lam U^\top \Lam)_\ell (z^\top \Lam)_p + 2(z^\top \Lam)_\ell \tr(U \Lam) (z^\top \Lam)_p] \nonumber
\\ &-v(\Lam^2)_{\ell p}.
\end{align*}

Below, we will collect all the calculations, with the same arguments that have already been introduced. 
\begin{align*}
&\ex\left(\sum_{n=1}^N\sum_{i=1}^d \sum_{j=1}^d\sum_{k=1}^dz_iU_{jk}x^{(i)}_n x_n^{(j)}x^{(k)}_q \right)\left(\sum_{n=1}^N\sum_{i=1}^dz_ix^{(i)}_n x^{(\ell)}_n x_q^{(p)}\right)
\\=&\ex\left(\sum_{n=1}^N\sum_{i=1}^d \sum_{j=1}^d\sum_{k=1}^dz_iU_{jk}x^{(i)}_n x_n^{(j)}x^{(k)}_q \sum_{n'=1}^N\sum_{i'=1}^dz_{i'}x^{(i')}_{n'} x^{(\ell)}_{n'} x_q^{(p)}\right)
\\=&\ex\left(\sum_{n=1}^N\sum_{i=1}^d \sum_{j=1}^d\sum_{k=1}^d\sum_{n'=1}^N\sum_{i'=1}^dz_iz_{i'}U_{jk}x^{(i)}_n x_n^{(j)}x^{(i')}_{n'} x^{(\ell)}_{n'} \Lam_{kp}\right)
\\=&\ex\left(\sum_{n=1}^N\sum_{i=1}^d \sum_{j=1}^d\sum_{k=1}^d\sum_{i'=1}^dz_iz_{i'}U_{jk}x^{(i)}_n x_n^{(j)}x^{(i')}_{n} x^{(\ell)}_{n} \Lam_{kp}\right)
\\&+\ex\left(\sum_{n=1}^N\sum_{i=1}^d \sum_{j=1}^d\sum_{k=1}^d\sum_{n'=1, n'\neq n}^N\sum_{i'=1}^dz_iz_{i'}U_{jk}x^{(i)}_n x_n^{(j)}x^{(i')}_{n'} x^{(\ell)}_{n'} \Lam_{kp}\right)
\\=&N\sum_{i=1}^d \sum_{j=1}^d\sum_{k=1}^d\sum_{i'=1}^dz_iz_{i'}U_{jk}(\Lam_{ij}\Lam_{i'\ell}+\Lam_{ii'}\Lam_{j\ell}+\Lam_{i\ell}\Lam_{ji'}) \Lam_{kp}
\\&+N(N-1)\sum_{n=1}^N\sum_{i=1}^d \sum_{j=1}^d\sum_{k=1}^d\sum_{i'=1}^dz_iz_{i'}U_{jk}\Lam_{ij}\Lam_{i'\ell} \Lam_{kp}
\\=&N\sum_{i=1}^d \sum_{j=1}^d\sum_{k=1}^d\sum_{i'=1}^dz_iz_{i'}U_{jk}(\Lam_{ii'}\Lam_{j\ell}+\Lam_{i\ell}\Lam_{ji'}) \Lam_{kp}
\\&+N^2\sum_{n=1}^N\sum_{i=1}^d \sum_{j=1}^d\sum_{k=1}^d\sum_{i'=1}^dz_iz_{i'}U_{jk}\Lam_{ij}\Lam_{i'\ell} \Lam_{kp}
\\=&N(z^\top \Lam z)(\Lam U\Lam)_{\ell p}+ N(N+1)(z^\top\Lam)_\ell (z^\top \Lam U\Lam)_p.
\end{align*}

\begin{align*}
&\ex\left(\sum_{n=1}^N\sum_{i=1}^d \sum_{j=1}^d\sum_{k=1}^dz_iU_{jk}x^{(i)}_n x_n^{(j)}x^{(k)}_q \right)\left(\sum_{n=1}^N\sum_{i=1}^dvw_ix^{(i)}_n x^{(\ell)}_n x_q^{(p)}\right)=0.
\end{align*}

\newpage

\begin{align*}
&\ex\left(\sum_{n=1}^N\sum_{i=1}^d \sum_{j=1}^d\sum_{k=1}^dz_iU_{jk}x^{(i)}_n x_n^{(j)}x^{(k)}_q \right)\left(\sum_{i=1}^dz_ix^{(i)}_q x^{(\ell)}_qx_q^{(p)}\right)
\\=&\ex\left(\sum_{n=1}^N\sum_{i=1}^d \sum_{j=1}^d\sum_{k=1}^dz_iU_{jk}x^{(i)}_n x_n^{(j)}x^{(k)}_q \sum_{i'=1}^dz_{i'}x^{(i')}_q x^{(\ell)}_qx_q^{(p)}\right)
\\=&N\ex\left(\sum_{i=1}^d \sum_{j=1}^d\sum_{k=1}^d\sum_{i'=1}^dz_iz_{i'}U_{jk}\Lam_{ij}x^{(k)}_q x^{(i')}_q x^{(\ell)}_qx_q^{(p)}\right)
\\=&N\sum_{i=1}^d \sum_{j=1}^d\sum_{k=1}^d\sum_{i'=1}^dz_iz_{i'}U_{jk}\Lam_{ij}(\Lam_{ki'}\Lam_{\ell p}+\Lam_{k\ell}\Lam_{i' p}+\Lam_{kp}\Lam_{\ell i'})
\\=&N(z^\top\Lam U\Lam z)(\Lam)_{\ell P}+N(z^\top\Lam U\Lam)_\ell (z^\top \Lam)_p +N(z^\top \Lam)_\ell(z^\top\Lam U\Lam)_p.
\end{align*}

\begin{align*}
&\ex\left(\sum_{n=1}^N\sum_{i=1}^d \sum_{j=1}^d\sum_{k=1}^dz_iZ_kw_jx^{(i)}_n x_n^{(j)}x^{(k)}_q \right)\left(\sum_{n=1}^N\sum_{i=1}^dz_ix^{(i)}_n x^{(\ell)}_n x_q^{(p)}\right)=0.
\end{align*}

\begin{align*}
&\ex\left(\sum_{n=1}^N\sum_{i=1}^d \sum_{j=1}^d\sum_{k=1}^dz_iZ_kw_jx^{(i)}_n x_n^{(j)}x^{(k)}_q \right)\left(\sum_{n=1}^N\sum_{i=1}^dvw_ix^{(i)}_n x^{(\ell)}_n x_q^{(p)}\right)
\\=&\ex\left(\sum_{n=1}^N\sum_{i=1}^d \sum_{j=1}^d\sum_{k=1}^dz_iZ_kw_jx^{(i)}_n x_n^{(j)}x^{(k)}_q \sum_{n'=1}^N\sum_{i'=1}^dvw_{i'}x^{(i')}_{n'} x^{(\ell)}_{n'} x_q^{(p)}\right)
\\=&\ex\left(\sum_{n=1}^N\sum_{i=1}^d \sum_{j=1}^d\sum_{k=1}^d \sum_{n'=1}^Nvz_iZ_kx^{(i)}_n x_n^{(j)}x^{(k)}_qx^{(j)}_{n'} x^{(\ell)}_{n'} x_q^{(p)}\right)
\\=&\ex\left(\sum_{n=1}^N\sum_{i=1}^d \sum_{j=1}^d\sum_{k=1}^d \sum_{n'=1}^Nvz_iZ_kx^{(i)}_n x_n^{(j)}x^{(j)}_{n'} x^{(\ell)}_{n'} \Lam_{kp}\right)
\\=&\ex\left(\sum_{n=1}^N\sum_{i=1}^d \sum_{j=1}^d\sum_{k=1}^d vz_iZ_kx^{(i)}_n x_n^{(j)}x^{(j)}_{n} x^{(\ell)}_{n} \Lam_{kp}\right)
\\&+\ex\left(\sum_{n=1}^N\sum_{i=1}^d \sum_{j=1}^d\sum_{k=1}^d \sum_{n'=1, n'\neq n}^Nvz_iZ_kx^{(i)}_n x_n^{(j)}x^{(j)}_{n'} x^{(\ell)}_{n'} \Lam_{kp}\right)
\\=&N\sum_{i=1}^d \sum_{j=1}^d\sum_{k=1}^d vz_iZ_k(2\Lam_{ij}\Lam_{j\ell}+ \Lam_{i\ell}\Lam_{jj})\Lam_{kp}
+N(N-1)\sum_{i=1}^d \sum_{j=1}^d\sum_{k=1}^d vz_iZ_k\Lam_{ij}\Lam_{j\ell}\Lam_{kp}
\\=&N\sum_{i=1}^d \sum_{j=1}^d\sum_{k=1}^d vz_iZ_k \Lam_{i\ell}\Lam_{jj}\Lam_{kp}
+N(N+1)\sum_{i=1}^d \sum_{j=1}^d\sum_{k=1}^d vz_iZ_k\Lam_{ij}\Lam_{j\ell}\Lam_{kp}
\\=& Nv\tr(\Lam) (z^\top \Lam)_\ell (Z^\top\Lam)_p+ N(N+1)v(z^\top \Lam^2)_\ell (Z^\top \Lam)_p.
\end{align*}

\begin{align*}
&\ex\left(\sum_{n=1}^N\sum_{i=1}^d \sum_{j=1}^d\sum_{k=1}^dz_iZ_kw_jx^{(i)}_n x_n^{(j)}x^{(k)}_q \right)\left(\sum_{i=1}^dz_ix^{(i)}_q x^{(\ell)}_qx_q^{(p)}\right)=0.
\end{align*}

\begin{align*}
&\ex\left(\sum_{n=1}^N\sum_{i=1}^d \sum_{j=1}^d\sum_{k=1}^dvw_iU_{jk}x^{(i)}_n x_n^{(j)}x^{(k)}_q \right)\left(\sum_{n=1}^N\sum_{i=1}^dz_ix^{(i)}_n x^{(\ell)}_n x_q^{(p)}\right)=0.
\end{align*}

\begin{align*}
&\ex\left(\sum_{n=1}^N\sum_{i=1}^d \sum_{j=1}^d\sum_{k=1}^dvw_iU_{jk}x^{(i)}_n x_n^{(j)}x^{(k)}_q \right)\left(\sum_{n=1}^N\sum_{i=1}^dvw_ix^{(i)}_n x^{(\ell)}_n x_q^{(p)}\right)
\\=&\ex\left(\sum_{n=1}^N\sum_{i=1}^d \sum_{j=1}^d\sum_{k=1}^dvw_iU_{jk}x^{(i)}_n x_n^{(j)}x^{(k)}_q \sum_{n'=1}^N\sum_{i'=1}^dvw_{i'}x^{(i')}_{n'} x^{(\ell)}_{n'} x_q^{(p)}\right)
\\=&\ex\left(\sum_{n=1}^N\sum_{i=1}^d \sum_{j=1}^d\sum_{k=1}^d\sum_{n'=1}^Nv^2U_{jk}x^{(i)}_n x_n^{(j)}x^{(k)}_q x^{(i)}_{n'} x^{(\ell)}_{n'} x_q^{(p)}\right)
\\=&\ex\left(\sum_{n=1}^N\sum_{i=1}^d \sum_{j=1}^d\sum_{k=1}^d\sum_{n'=1}^Nv^2U_{jk}x^{(i)}_n x_n^{(j)} x^{(i)}_{n'} x^{(\ell)}_{n'} \Lam_{kp}\right)
\\=&\ex\left(\sum_{n=1}^N\sum_{i=1}^d \sum_{j=1}^d\sum_{k=1}^dv^2U_{jk}x^{(i)}_n x_n^{(j)} x^{(i)}_{n} x^{(\ell)}_{n} \Lam_{kp}\right)\\&+\ex\left(\sum_{n=1}^N\sum_{i=1}^d \sum_{j=1}^d\sum_{k=1}^d\sum_{n'=1, n'\neq n}^Nv^2U_{jk}x^{(i)}_n x_n^{(j)} x^{(i)}_{n'} x^{(\ell)}_{n'} \Lam_{kp}\right)
\\=& N\sum_{i=1}^d \sum_{j=1}^d\sum_{k=1}^dv^2U_{jk}(2\Lam_{ij}\Lam_{i\ell}+\Lam_{ii}\Lam_{j\ell}) \Lam_{kp}+N(N-1)\sum_{i=1}^d \sum_{j=1}^d\sum_{k=1}^dv^2U_{jk}\Lam_{ij}\Lam_{i\ell} \Lam_{kp}
\\=& N\sum_{i=1}^d \sum_{j=1}^d\sum_{k=1}^dv^2U_{jk}\Lam_{ii}\Lam_{j\ell} \Lam_{kp}+N(N+1)\sum_{i=1}^d \sum_{j=1}^d\sum_{k=1}^dv^2U_{jk}\Lam_{ij}\Lam_{i\ell} \Lam_{kp}
\\=& N v^2(\Lam U\Lam)_{\ell p}\tr(\Lam)+N(N+1) v^2 (\Lam ^2U\Lam)_{\ell p}.
\end{align*}

\begin{align*}
&\ex\left(\sum_{n=1}^N\sum_{i=1}^d \sum_{j=1}^d\sum_{k=1}^dvw_iU_{jk}x^{(i)}_n x_n^{(j)}x^{(k)}_q \right)\left(\sum_{i=1}^dz_ix^{(i)}_q x^{(\ell)}_qx_q^{(p)}\right)=0.
\end{align*}

\begin{align*}
&\ex\left(\sum_{n=1}^N\sum_{i=1}^d \sum_{j=1}^d\sum_{k=1}^dvZ_kw_iw_jx^{(i)}_n x_n^{(j)}x^{(k)}_q \right)\left(\sum_{n=1}^N\sum_{i=1}^dz_ix^{(i)}_n x^{(\ell)}_n x_q^{(p)}\right)
\\=&\ex\left(\sum_{n=1}^N\sum_{i=1}^d \sum_{j=1}^d\sum_{k=1}^dvZ_kw_iw_jx^{(i)}_n x_n^{(j)}x^{(k)}_q \sum_{n'=1}^N\sum_{i'=1}^dz_{i'}x^{(i')}_{n'} x^{(\ell)}_{n'} x_q^{(p)}\right)
\\=&\ex\left(\sum_{n=1}^N\sum_{i=1}^d \sum_{k=1}^d\sum_{n'=1}^N\sum_{i'=1}^dz_{i'}vZ_kx^{(i)}_n x_n^{(i)}x^{(k)}_q x^{(i')}_{n'} x^{(\ell)}_{n'} x_q^{(p)}\right)
\\=&\ex\left(\sum_{n=1}^N\sum_{i=1}^d \sum_{k=1}^d\sum_{n'=1}^N\sum_{i'=1}^dz_{i'}vZ_kx^{(i)}_n x_n^{(i)}x^{(i')}_{n'} x^{(\ell)}_{n'} \Lam_{kp}\right)
\\=&\ex\left(\sum_{n=1}^N\sum_{i=1}^d \sum_{k=1}^d\sum_{i'=1}^dz_{i'}vZ_kx^{(i)}_n x_n^{(i)}x^{(i')}_{n} x^{(\ell)}_{n} \Lam_{kp}\right)
\\&+\ex\left(\sum_{n=1}^N\sum_{i=1}^d \sum_{k=1}^d\sum_{n'=1, n'\neq n}^N\sum_{i'=1}^dz_{i'}vZ_kx^{(i)}_n x_n^{(i)}x^{(i')}_{n'} x^{(\ell)}_{n'} \Lam_{kp}\right)
\\=& N\sum_{i=1}^d \sum_{k=1}^d\sum_{i'=1}^dz_{i'}vZ_k(\Lam_{ii}\Lam_{i'\ell}+2\Lam_{ii'}\Lam_{i\ell})\Lam_{kp}
+N(N-1)\sum_{i=1}^d \sum_{k=1}^d\sum_{n'=1, n'\neq n}^N\sum_{i'=1}^dz_{i'}vZ_k\Lam_{ii}\Lam_{i'\ell}\Lam_{kp} 
\\=& 2 N\sum_{i=1}^d \sum_{k=1}^d\sum_{i'=1}^dz_{i'}vZ_k\Lam_{ii'}\Lam_{i\ell}\Lam_{kp}
+N^2\sum_{i=1}^d \sum_{k=1}^d\sum_{n'=1, n'\neq n}^N\sum_{i'=1}^dz_{i'}vZ_k\Lam_{ii}\Lam_{i'\ell}\Lam_{kp} 
\\=&2N v (z^\top\Lam^2)_\ell (Z^\top\Lam)_p+ N^2v(z^\top \Lam)_\ell (Z^\top\Lam)_p \tr(\Lam).
\end{align*}

\begin{align*}
&\ex\left(\sum_{n=1}^N\sum_{i=1}^d \sum_{j=1}^d\sum_{k=1}^dvZ_kw_iw_jx^{(i)}_n x_n^{(j)}x^{(k)}_q \right)\left(\sum_{n=1}^N\sum_{i=1}^dvw_ix^{(i)}_n x^{(\ell)}_n x_q^{(p)}\right)=0.
\end{align*}

\begin{align*}
&\ex\left(\sum_{n=1}^N\sum_{i=1}^d \sum_{j=1}^d\sum_{k=1}^dvZ_kw_iw_jx^{(i)}_n x_n^{(j)}x^{(k)}_q \right)\left(\sum_{i=1}^dz_ix^{(i)}_q x^{(\ell)}_qx_q^{(p)}\right)
\\=&\ex\left(\sum_{n=1}^N\sum_{i=1}^d \sum_{j=1}^d\sum_{k=1}^dvZ_kw_iw_jx^{(i)}_n x_n^{(j)}x^{(k)}_q \sum_{i'=1}^dz_{i'}x^{(i')}_q x^{(\ell)}_qx_q^{(p)}\right)
\\=&\ex\left(\sum_{n=1}^N\sum_{i=1}^d \sum_{k=1}^d\sum_{i'=1}^dv^2z_{i'}Z_kx^{(i)}_n x_n^{(i)}x^{(k)}_q x^{(i')}_q x^{(\ell)}_qx_q^{(p)}\right)
\\=&N\tr(\Lam)\ex\left(\sum_{k=1}^d\sum_{i'=1}^dvz_{i'}Z_kx^{(k)}_q x^{(i')}_q x^{(\ell)}_qx_q^{(p)}\right)
\\=&N\tr(\Lam)\sum_{k=1}^d\sum_{i'=1}^dvz_{i'}Z_k(\Lam_{ki'}\Lam_{\ell p}+\Lam_{k\ell}\Lam_{i' p}+\Lam_{kp}\Lam_{\ell i'})
\\=&N\tr(\Lam)v ((z^\top\Lam Z) \Lam_{\ell p} + 2(Z^\top \Lam )_\ell(z^\top \Lam)_p).
\end{align*}

\begin{align*}
&\ex\left(\sum_{i=1}^d \sum_{j=1}^d\sum_{k=1}^dz_iU_{jk}x^{(i)}_qx_q^{(j)}x^{(k)}_q \right)\left(\sum_{n=1}^N\sum_{i=1}^dz_ix^{(i)}_n x^{(\ell)}_n x_q^{(p)}\right)
\\=&\ex\left(\sum_{i=1}^d \sum_{j=1}^d\sum_{k=1}^dz_iU_{jk}x^{(i)}_qx_q^{(j)}x^{(k)}_q \sum_{n=1}^N\sum_{i'=1}^dz_{i'}x^{(i')}_{n} x^{(\ell)}_n x_q^{(p)}\right)
\\=&N\ex\left(\sum_{i=1}^d \sum_{j=1}^d\sum_{k=1}^d\sum_{i'=1}^dz_{i'}z_iU_{jk}x^{(i)}_qx_q^{(j)}x^{(k)}_q  x_q^{(p)}\Lam_{i'\ell}\right)
\\=&N\sum_{i=1}^d \sum_{j=1}^d\sum_{k=1}^d\sum_{i'=1}^dz_{i'}z_iU_{jk}(\Lam_{ij}\Lam_{kp}+\Lam_{ik}\Lam_{jp}+\Lam_{ip}\Lam_{jk})\Lam_{i'\ell}
\\=&N(z^\top\Lam)_\ell(z^\top\Lam U\Lam)_p+ N(z^\top\Lam)_\ell(z^\top\Lam U^\top\Lam)_p+ N(z^\top\Lam)_\ell(z^\top\Lam)_p\tr(U\Lam).
\end{align*}

\begin{align*}
&\ex\left(\sum_{i=1}^d \sum_{j=1}^d\sum_{k=1}^dz_iU_{jk}x^{(i)}_qx_q^{(j)}x^{(k)}_q \right)\left(\sum_{n=1}^N\sum_{i=1}^dvw_ix^{(i)}_n x^{(\ell)}_n x_q^{(p)}\right)=0.
\end{align*}

\begin{align*}
&\ex\left(\sum_{i=1}^d \sum_{j=1}^d\sum_{k=1}^dz_iU_{jk}x^{(i)}_qx_q^{(j)}x^{(k)}_q \right)\left(\sum_{i=1}^dz_ix^{(i)}_q x^{(\ell)}_qx_q^{(p)}\right)
\\=&\ex\left(\sum_{i=1}^d \sum_{j=1}^d\sum_{k=1}^dz_iU_{jk}x^{(i)}_qx_q^{(j)}x^{(k)}_q \sum_{i'=1}^dz_{i'}x^{(i')}_q x^{(\ell)}_qx_q^{(p)}\right)
\\=&\ex\left(\sum_{i=1}^d \sum_{j=1}^d\sum_{k=1}^d\sum_{i'=1}^dz_{i'}z_iU_{jk}x^{(i)}_qx_q^{(j)}x^{(k)}_q x^{(i')}_q x^{(\ell)}_qx_q^{(p)}\right)
\\=&\sum_{i=1}^d \sum_{j=1}^d\sum_{k=1}^d\sum_{i'=1}^dz_{i'}z_iU_{jk}[\Lam_{ij}(\Lam_{ki'}\Lam_{\ell p}+\Lam_{k\ell}\Lam_{i' p}+\Lam_{kp}\Lam_{\ell i'})\\ &+ \Lam_{ik}(\Lam_{ji'}\Lam_{\ell p}+\Lam_{j\ell}\Lam_{i' p}+\Lam_{jp}\Lam_{\ell i'})+\Lam_{ii'}(\Lam_{kj}\Lam_{\ell p}+\Lam_{k\ell}\Lam_{j p}+\Lam_{kp}\Lam_{\ell j})
\\&+\Lam_{i\ell}(\Lam_{ki'}\Lam_{j p}+\Lam_{kj}\Lam_{i' p}+\Lam_{kp}\Lam_{j i'})+\Lam_{ip}(\Lam_{ki'}\Lam_{\ell j}+\Lam_{k\ell}\Lam_{i' j}+\Lam_{kj}\Lam_{\ell i'})]
\\= & (z^\top \Lam U\Lam z)\Lam_{\ell p}+ (z^\top \Lam U\Lam)_\ell (z^\top \Lam)_p+ (z^\top \Lam)_\ell(z^\top \Lam U\Lam)_p
\\&+ (z^\top \Lam U^\top \Lam z) \Lam_{\ell p}+ (z^\top\Lam U^\top \Lam)_\ell(z^\top \Lam)_p + (z^\top \Lam)_\ell(z^\top \Lam U^\top\Lam)_p
\\&+ (z^\top \Lam z) \tr(U\Lam) \Lam_{\ell p}+  (z^\top \Lam z)(\Lam U^\top \Lam)_{\ell p} + (z^\top \Lam z)(\Lam U \Lam)_{\ell p}\\ &+ (z^\top \Lam)_\ell(z^\top\Lam U^\top\Lam)_p +(z^\top \Lam)_\ell(z^\top\Lam)_p\tr(U\Lam)_p+  (z^\top \Lam)_\ell(z^\top\Lam U\Lam)_p
\\&+(z^\top \Lam U^\top \Lam)_\ell (z^\top \lam)_p + (z^\top \Lam U \Lam)_\ell (z^\top \lam)_p + (z^\top \Lam)_\ell \tr(U \Lam) (z^\top \lam)_p.
\end{align*}

\begin{align*}
&\ex\left[\sum_{m=1}^d w_mx_q^{(m)}\left(\sum_{n=1}^N \sum_{i=1}^d(z_i+vw_i)x^{(i)}_n x^{(\ell)}_n x_q^{(p)}+ \sum_{i=1}^dz_i x_q^{(i)}  x_q^{(\ell)}x_q^{(p)}\right)\right]
\\=&\ex\left[\sum_{m=1}^d\sum_{n=1}^N \sum_{i=1}^d w_mx_q^{(m)}vw_ix^{(i)}_n x^{(\ell)}_n x_q^{(p)}\right]
\\=&\ex\left[\sum_{n=1}^N \sum_{i=1}^dx_q^{(i)}vx^{(i)}_n x^{(\ell)}_n x_q^{(p)}\right]
\\=& N \sum_{i=1}^dv\Lam_{i\ell}\Lam_{ip}=Nv(\Lam^2)_{\ell p}.
\end{align*}

\end{document}